\numberwithin{equation}{section} 
\theoremstyle{plain}
\newtheorem{theorem}{Theorem}[section]
\newtheorem{proposition}[theorem]{Proposition}
\newtheorem{lemma}[theorem]{Lemma}
\newtheorem{corollary}[theorem]{Corollary}
\theoremstyle{definition}
\newtheorem{example}[theorem]{Example}
\newtheorem{hypo}[theorem]{Hypothesis}
\theoremstyle{remark}
\newtheorem{remark}[theorem]{Remark}
\renewcommand{\to}{\longrightarrow}
\begin{document}

\title{On representations of $\mathrm{Gal}(\overline{\mathbb{Q}}/\mathbb{Q})$, $\widehat{GT}$ and $\mathrm{Aut}(\widehat{F}_2)$}

\dedicatory{In memory of Jan Saxl}

\author[F. Bleher]{Frauke M. Bleher}
\address{F.B.: Department of Mathematics\\University of Iowa\\
14 MacLean Hall\\Iowa City, IA 52242-1419\\ U.S.A.}
\email{frauke-bleher@uiowa.edu}

\author[T. Chinburg]{Ted Chinburg}
\address{T.C.: Department of Mathematics\\University of Pennsylvania\\Philadelphia, PA 19104-6395\\ U.S.A.}
\email{ted@math.upenn.edu}

\author[A. Lubotzky]{Alexander Lubotzky}
\address{A.L.:  Einstein Institute of Mathematics\\Hebrew University\\ 
Givat Ram, Jerusalem 91904\\ Israel}
\email{alex.lubotzky@mail.huji.ac.il}

\date{May 25, 2021}

\subjclass[2010]{Primary 14G32; Secondary 20F34, 14H30}

\begin{abstract} 
By work of Bely\u{\i} \cite{Belyi}, the absolute Galois group $G_{\mathbb{Q}}=\mathrm{Gal}(\overline{\mathbb{Q}}/\mathbb{Q})$ of the field $\mathbb{Q}$ of rational numbers can be embedded into $A=\mathrm{Aut}(\widehat{F}_2)$, the automorphism group of the free profinite group $\widehat{F}_2$ on two generators. The image of $G_{\mathbb{Q}}$ lies inside $\widehat{GT}$, the Grothendieck-Teichm\"uller group. While it is known that every abelian representation of $G_{\mathbb{Q}}$ can be extended to $\widehat{GT}$, Lochak and Schneps \cite{LochakSchneps} put forward the challenge of constructing irreducible non-abelian representations of $\widehat{GT}$. We do this virtually, namely by showing that a rich class of arithmetically defined representations of $G_{\mathbb{Q}}$ can be extended to finite index subgroups of $\widehat{GT}$. This is achieved, in fact, by extending these representations all the way to  finite index subgroups of $A=\mathrm{Aut}(\widehat{F}_2)$. We do this by developing a profinite version of the work of Grunewald and Lubotzky \cite{LG}, which provided a rich collection of representations for the discrete group $\mathrm{Aut}(F_d)$.
\end{abstract}

\maketitle

\section{Introduction}
\label{s:intro}

Let $G_{\mathbb{Q}}=\mathrm{Gal}(\overline{\mathbb{Q}}/\mathbb{Q})$ be the absolute Galois group of the field $\mathbb{Q}$ of rational numbers. Let $A=\mathrm{Aut}(\widehat{F}_2)$ be the automorphism group of the free profinite group $\widehat{F}_2$ on two generators. Bely\u{\i} showed in \cite{Belyi} that there is a canonical embedding $\iota$ of $G_{\mathbb{Q}}$ into $A$ (see \S \ref{s:GT} for details).  In \cite{D}, Drinfel'd defined the so-called Grothendieck-Teichm\"uller group whose profinite version $\widehat{GT}$ is a certain subgroup of $A$ that contains the image of  $G_{\mathbb{Q}}$ under $\iota$. For background on $\widehat{GT}$ and on variants of this group, see \cite{HarbaterSchneps, SchnepsSurvey} and \cite{LochakSchneps}.  Thus $\iota(G_{\mathbb{Q}})\le \widehat{GT}\le A$, and much effort has been dedicated to understanding the connection between $G_{\mathbb{Q}}$ and $\widehat{GT}$. It is known that every homomorphism from $G_{\mathbb{Q}}$ to an abelian group can be extended to $\widehat{GT}$ (see, for example, \cite[\S1.4(6)]{SchnepsSurvey}) but no similar result is known for non-abelian quotients. In \cite[p. 179]{LochakSchneps}, Lochak and Schneps wrote that ``no  irreducible non-abelian representation of any version of (the Grothendieck-Teichm\"uller group) has been constructed to date."  The goal of this paper is to address this challenge. Our main results will show that many naturally constructed representations of $G_{\mathbb{Q}}$ can be extended to $\widehat{GT}$, at least virtually, i.e. to a finite index subgroup. The main point of our work is that instead of trying to extend a representation from $G_{\mathbb{Q}}$ to $\widehat{GT}$, we extend it further all the way to a representation of a finite index subgroup of $A=\mathrm{Aut}(\widehat{F}_2)$, and thus in particular to a finite index subgroup of $\widehat{GT}$. What makes this possible is adapting, to the profinite world, the work of Grunewald and Lubotzky in \cite{LG}.  They constructed many  linear representations of the discrete group $\mathrm{Aut}(F_d)$ and showed that these provide surjections of $\mathrm{Aut}(F_d)$ onto various arithmetic groups. It turns out that the analogous profinite theory is easier than the discrete one (see below) and leads to stronger results. In particular, this approach works  for $A=\mathrm{Aut}(\widehat{F}_2)$, while the discrete theory has given interesting results for $\mathrm{Aut}(F_d)$ only when $d\ge 3$.  As an illustrative example, we will show the following result in connection with the challenge of Lochak and Schneps:

\begin{theorem}
\label{thm:irred}
Suppose $t> 1$ is an odd integer. Let $\zeta$ be a primitive $t^{th}$ root of unity, and let $E_\zeta$ be the elliptic curve with affine equation
$$y^2 = x (x-1) (x-\zeta).$$ 
There is a number field $F$ over which $E_\zeta$ is defined, with the following property for every prime $\ell$. The action of the finite index subgroup $G_F = \mathrm{Gal}(\overline{\mathbb{Q}}/F)$ of $G_{\mathbb{Q}}$ on the $\ell$-adic Tate module $T_\ell(E_\zeta)$ can be extended to an action of a finite index subgroup $A_{E_\zeta}$ of $A$ that contains the image of $G_F$ under the Bely\u{\i} embedding $\iota: G_{\mathbb{Q}}\to A$.  Let $\tau_{\zeta,\ell}: A_{E_\zeta}\to \mathrm{GL}(\mathbb{Q}_\ell \otimes_{\mathbb{Z}_\ell} T_\ell(E_\zeta))$ be the associated representation of $A_{E_\zeta}$ over $\mathbb{Q}_\ell$. Suppose the value $\phi(t)$ of Euler's phi function on $t$ is larger than $24$.  Then the restriction of $\tau_{\zeta,\ell}$ to any subgroup between $A_{E_\zeta}$ and $\iota(G_F)$ is absolutely irreducible and non-abelian.  This is true, in particular, for the restriction of $\tau_{\zeta,\ell}$ to the finite index subgroup $\widehat{GT} \cap A_{E_\zeta}$ of $\widehat{GT}$.
\end{theorem}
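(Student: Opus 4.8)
The plan is to reduce the entire statement to a single assertion about the Galois representation on the Tate module, and then to invoke the theory of Galois images of elliptic curves.

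First I would observe that it suffices to prove absolute irreducibility and non-abelianness for the restriction of $\tau_{\zeta,\ell}$ to the \emph{smallest} group in the allowed range, namely $\iota(G_F)$. Both properties are monotone under enlarging the group: if $\iota(G_F)\subseteq H\subseteq H'\subseteq A_{E_\zeta}$, then $\tau_{\zeta,\ell}(H)\subseteq\tau_{\zeta,\ell}(H')$, so a non-abelian image for $H$ forces one for $H'$, while the $\overline{\mathbb{Q}}_\ell$-span of $\tau_{\zeta,\ell}(H)$ inside $\mathrm{End}(\mathbb{Q}_\ell\otimes_{\mathbb{Z}_\ell} T_\ell(E_\zeta))$ can only grow; since absolute irreducibility is equivalent to this span being the full matrix algebra, it passes from $H$ to $H'$. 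Because $\iota(G_F)\subseteq\iota(G_{\mathbb{Q}})\subseteq\widehat{GT}$ and $\iota(G_F)\subseteq A_{E_\zeta}$ by hypothesis, we have $\iota(G_F)\subseteq\widehat{GT}\cap A_{E_\zeta}$, so the final assertion follows once the claim is proved for $\iota(G_F)$. By the extension property built into the theorem, the restriction of $\tau_{\zeta,\ell}$ to $\iota(G_F)\cong G_F$ is exactly the Galois action of $G_F$ on $\mathbb{Q}_\ell\otimes_{\mathbb{Z}_\ell} T_\ell(E_\zeta)$. Everything thus comes down to showing that this two-dimensional $\ell$-adic representation of $G_F$ is absolutely irreducible and has non-abelian image.

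Next I would dispose of the non-CM case. If $E_\zeta$ has no complex multiplication over $\overline{\mathbb{Q}}$, then by Serre's open image theorem the image of $G_F$ in $\mathrm{GL}_2(\mathbb{Z}_\ell)$ is open, and an open subgroup of $\mathrm{GL}_2(\mathbb{Z}_\ell)$ is Zariski dense in $\mathrm{GL}_2$ over $\mathbb{Q}_\ell$. A Zariski-dense subgroup of $\mathrm{GL}_2$ acts absolutely irreducibly in the standard representation and is manifestly non-abelian; passing to the finite-index subgroup $G_F$ changes nothing, since a finite-index subgroup of an open group is again open. This settles the theorem provided $E_\zeta$ has no CM.

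The crux, and the place where the hypothesis $\phi(t)>24$ enters, is therefore to rule out complex multiplication. The key structural input is that $\lambda=\zeta$ is a root of unity, so $j(E_\zeta)$ lies in the cyclotomic field $\mathbb{Q}(\zeta)$ and hence $\mathbb{Q}(j(E_\zeta))$ is an \emph{abelian} extension of $\mathbb{Q}$. Suppose toward a contradiction that $E_\zeta$ had CM by an order $\mathcal{O}$ of discriminant $D$ in an imaginary quadratic field $K$. Then $\mathbb{Q}(j(E_\zeta))$ has degree $h(\mathcal{O})$ over $\mathbb{Q}$, and $K(j(E_\zeta))$ is the ring class field $H_{\mathcal{O}}$, whose group over $\mathbb{Q}$ is the generalized dihedral group $\mathrm{Cl}(\mathcal{O})\rtimes\langle\text{complex conjugation}\rangle$. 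The index-two subfield $\mathbb{Q}(j(E_\zeta))$ is abelian over $\mathbb{Q}$ precisely when this dihedral group is abelian, i.e. when $\mathrm{Cl}(\mathcal{O})$ is an elementary abelian $2$-group; by genus theory these are exactly the discriminants with one class per genus, the idoneal discriminants, of which there are only finitely many (Euler's numeri idonei, together with at most finitely many others). Since $\mathbb{Q}(j(E_\zeta))\subseteq\mathbb{Q}(\zeta)$ is abelian, $D$ would have to be idoneal. I would then combine the inequality $\phi(t)=[\mathbb{Q}(\zeta):\mathbb{Q}]\le 6\,[\mathbb{Q}(j(E_\zeta)):\mathbb{Q}]=6\,h(\mathcal{O})$, where the factor $6$ is the degree of the $S_3$-cover $\lambda\mapsto j$, with the explicit finite list of idoneal discriminants and the orders of the roots of unity that can occur as Legendre parameters of curves with those CM $j$-invariants, to conclude that every such case forces $\phi(t)\le 24$. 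Hence $\phi(t)>24$ guarantees that $E_\zeta$ is non-CM, and the non-CM argument applies.

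I expect this final paragraph to be the main obstacle: making the bound sharp enough to read off exactly $\phi(t)\le 24$ requires controlling, for each idoneal $D$, precisely which roots of unity $\zeta$ arise as Legendre parameters of a curve with that CM $j$-invariant, using that the genus field $H_{\mathcal{O}}=\mathbb{Q}(j(E_\zeta))$ sits inside a suitably small cyclotomic field. Everything else is either formal (the monotonicity reduction) or a direct appeal to Serre's open image theorem.
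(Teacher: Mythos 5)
Your proposal addresses only the second half of the theorem and silently assumes the first. The statement that the $G_F$-action on $T_\ell(E_\zeta)$ \emph{can be extended} to a finite index subgroup $A_{E_\zeta}$ of $A$ containing $\iota(G_F)$ is a conclusion of the theorem, not a hypothesis, and it is where essentially all of the paper's machinery is spent; your phrase ``the extension property built into the theorem'' makes the argument circular at exactly this point. The paper's proof first chooses the Bely\u{\i} cover $\lambda: E_\zeta \to \mathbb{P}^1_{\overline{\mathbb{Q}}}$ given by $(x,y)\mapsto x^t$, factored through $\delta:(x,y)\mapsto x$ and the Kummer cover $\kappa: x \mapsto x^t$, passes to the Galois closure with group $H$ (a semidirect product of $\mathbb{Z}/t$ with an elementary abelian $2$-group), and invokes Theorem \ref{thm:restriction} and Remark \ref{rem:ell} to produce a finite index subgroup of $\mathrm{Aut}(\widehat{F}_2)$ acting on the Tate module $T_{\ell,\lambda}(E_\zeta)$ of the \emph{generalized} Jacobian, compatibly with the Galois action on a further finite index subgroup. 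It must then descend this action from $T_{\ell,\lambda}(E_\zeta)$ to $T_\ell(E_\zeta)$ itself, which is a genuinely nontrivial step (the paper's Example \ref{ex:nopreserve} shows descent can fail outright); the paper does this via the representation-theoretic criterion of Theorem \ref{thm:preserve2} and Remark \ref{rem:IDunno}, comparing how the elliptic involution acts on the toric term of the sequence (\ref{eq:genjac2}) and on $T_\ell(E_\zeta)$, where it acts as $-1$. None of this appears in your proposal, and without it the representation $\tau_{\zeta,\ell}$ you analyze does not yet exist. The parts you do prove---the monotonicity reduction to $\iota(G_F)$ and the appeal to Serre's open image theorem in the non-CM case---do coincide with the paper's endgame, but they are the easy part.

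The second gap is the one you flag yourself: excluding CM. Here your route diverges from the paper's and does not close. The paper computes $j(E_\zeta)=256\,(\zeta^2-\zeta+1)^3/(\zeta^2(\zeta-1)^2) \in \mathbb{Q}(\zeta)$, so $\mathbb{Q}(j(E_\zeta))$ is abelian over $\mathbb{Q}$ of degree at least $\phi(t)/6 > 4$ (the $\lambda$-line to $j$-line map has degree $6$), and then argues that a CM $j$-invariant lies in a class field that is dihedral over $\mathbb{Q}$, whose maximal abelian quotient it bounds by $4$; hence $\phi(t)>24$ forbids CM. Your version---CM together with abelian $\mathbb{Q}(j)$ forces an exponent-$2$ class group, i.e.\ an idoneal discriminant, after which one bounds over the finite list---is not completed (you say so), and along the path you sketch it cannot recover the constant $24$: known idoneal orders have class number as large as $16$, so your inequality $\phi(t)\le 6\,h(\mathcal{O})$ only produces a contradiction when $\phi(t)>96$, and even that depends on the completeness of the idoneal list, which is open (settled only under GRH, up to at most one exceptional discriminant). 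To be fair, your genus-theory worry points at a real subtlety---the Galois group of a ring class field over $\mathbb{Q}$ is \emph{generalized} dihedral, with abelianization $(\mathrm{Cl}/\mathrm{Cl}^2)\times\mathbb{Z}/2$, which exceeds order $4$ precisely in the idoneal cases, so the order-$4$ bound really uses cyclicity of the class group---but identifying the subtlety is not the same as resolving it, and your proposed resolution proves a statement with a different (much larger) threshold than the theorem asserts.
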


We now state a more general result concerning arbitrary smooth projective curves $X$ defined over $\overline{\mathbb{Q}}$. By a theorem of Bely\u{\i} \cite[Theorem 4]{Belyi1}, $X$ can be realized as a cover $\lambda:X\to\mathbb{P}^1_{\overline{\mathbb{Q}}}$ that is unramified outside $\{0,1,\infty\}$. Suppose $F$ is a number field over which $X$, the points in $\lambda^{-1}(\{0,1,\infty\})$ and the generalized Jacobian $J_\lambda(X)$ of $X$ with respect to  $\lambda^{-1}(\{0,1,\infty\})$ are defined.   Let $T_{\ell,\lambda}(X)$ denote the $\ell$-adic Tate module of $J_\lambda(X)$. This is a $\mathbb{Z}_\ell$-module of rank $2\,\mathrm{genus}(X)+|\lambda^{-1}(\{0,1,\infty\})|-1$.  The finite index subgroup $G_F=\mathrm{Gal}(\overline{\mathbb{Q}}/F)$ of $G_{\mathbb{Q}}$ acts naturally  on $T_{\ell,\lambda}(X)$ and  on the adelic Tate module
$$T_\lambda(X):=\prod_{\ell\;\mathrm{ prime}}T_{\ell,\lambda}(X)$$
of $J_\lambda(X)$. This gives rise to  Galois representations, which we denote by
$$\rho_{X,\ell}:G_F \to \mathrm{GL}(T_{\ell,\lambda}(X))
\quad \mbox{and} \quad
\rho_X: G_F \to \mathrm{GL}(T_\lambda(X)).$$

\begin{theorem}
\label{thm:main1}
There is a representation $\widetilde{\rho}_X: A_X\to \mathrm{GL}(T_\lambda(X))$ of a finite index subgroup $A_X$ of $A$ that agrees with $\rho_X$ on the finite index subgroup $G_F\cap \iota^{-1}(A_X)$ of $G_F$. In particular, the same holds for every $\rho_{X,\ell}$ that is extended to $\widetilde{\rho}_{X,\ell}: A_X\to \mathrm{GL}(T_{\ell,\lambda}(X))$.
\end{theorem}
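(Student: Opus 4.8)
The plan is to realize the adelic Tate module $T_\lambda(X)$ as the abelianization of an open subgroup of $\widehat{F}_2$ and to extend $\rho_X$ simply by restricting automorphisms. Write $V = \mathbb{P}^1_{\overline{\mathbb{Q}}}\setminus\{0,1,\infty\}$, so that $\pi_1^{\mathrm{et}}(V) = \widehat{F}_2$ with respect to the (tangential) base point $\vec b$ used to define $\iota$, and set $U = \lambda^{-1}(V)$, the smooth affine curve $X$ minus $S = \lambda^{-1}(\{0,1,\infty\})$. The connected finite \'etale cover $U \to V$ corresponds to an open subgroup $\widehat{H} = \pi_1^{\mathrm{et}}(U) \le \widehat{F}_2$, well defined up to conjugacy once a point of $U$ above $\vec b$ is chosen. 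By the profinite Nielsen--Schreier theorem $\widehat{H}$ is free profinite of rank $1 + [\widehat{F}_2:\widehat{H}]$, and a Riemann--Hurwitz computation for $\lambda$ (which has degree $2\,\mathrm{genus}(X)+|S|-2$) shows this rank equals $2\,\mathrm{genus}(X) + |S| - 1$; hence $\widehat{H}^{\mathrm{ab}} \cong \widehat{\mathbb{Z}}^{\,2\,\mathrm{genus}(X)+|S|-1}$, matching the rank of $T_\lambda(X)$.

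With this in hand I would set $A_X = \Stab_A(\widehat{H}) = \{\phi \in A : \phi(\widehat{H}) = \widehat{H}\}$ and define
$$\widetilde{\rho}_X : A_X \longrightarrow \GL(\widehat{H}^{\mathrm{ab}}), \qquad \phi \longmapsto (\phi|_{\widehat{H}})^{\mathrm{ab}},$$
which is a homomorphism because restriction to the stable subgroup $\widehat{H}$ and abelianization are functorial. That $A_X$ has finite index in $A$ follows because the topologically finitely generated group $\widehat{F}_2$ has only finitely many open subgroups of the fixed index $[\widehat{F}_2:\widehat{H}]$; the group $A$ permutes this finite set continuously, so the stabilizer of $\widehat{H}$ is an open subgroup of finite index. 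Consequently $\iota^{-1}(A_X)$ has finite index in $G_{\mathbb{Q}}$ and $G_F \cap \iota^{-1}(A_X)$ has finite index in $G_F$, as required by the statement.

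The substantive step is to identify $T_\lambda(X)$ with $\widehat{H}^{\mathrm{ab}}$ as $G_F$-modules. Since $X$, $S$ and $J_\lambda(X)$ are defined over $F$, the generalized Jacobian is the semi-abelian Albanese variety of the $F$-model $U_F$, and the Abel--Jacobi map induces a $G_F$-equivariant isomorphism $T_\lambda(X) = T(J_\lambda(X)) \cong H_1^{\mathrm{et}}(U, \widehat{\mathbb{Z}}) = \pi_1^{\mathrm{et}}(U)^{\mathrm{ab}} = \widehat{H}^{\mathrm{ab}}$, where the right-hand $G_F$-action is that coming from the homotopy exact sequence $1 \to \pi_1^{\mathrm{et}}(U) \to \pi_1^{\mathrm{et}}(U_F) \to G_F \to 1$ passed to abelianizations. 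Under $\GL(\widehat{H}^{\mathrm{ab}}) = \GL(T_\lambda(X))$ it then remains to verify $\widetilde{\rho}_X \circ \iota = \rho_X$ on $G_F \cap \iota^{-1}(A_X)$. For $\sigma$ in that intersection we have $\iota(\sigma)(\widehat{H}) = \widehat{H}$ by definition; the automorphism $\iota(\sigma)$ of $\widehat{F}_2 = \pi_1^{\mathrm{et}}(V)$ is the Galois action of $\sigma$ for $\vec b$, and by functoriality of the homotopy sequence along $U_F \to V_F$ its restriction $\iota(\sigma)|_{\widehat{H}}$ is a lift of the canonical outer $\sigma$-action on $\pi_1^{\mathrm{et}}(U)$. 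Passing to $\widehat{H}^{\mathrm{ab}}$ kills inner automorphisms, so $\widetilde{\rho}_X(\iota(\sigma)) = (\iota(\sigma)|_{\widehat{H}})^{\mathrm{ab}}$ equals the abelianized outer action, which is $\rho_X(\sigma)$; projecting to the pro-$\ell$ factor then gives each $\widetilde{\rho}_{X,\ell}$.

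I expect the main obstacle to be precisely this final comparison: establishing the $G_F$-equivariant identification of $T_\lambda(X)$ with $\pi_1^{\mathrm{et}}(U)^{\mathrm{ab}}$ and reconciling the two a priori different descriptions of the Galois action — one via the torsion of $J_\lambda(X)$, the other via the arithmetic fundamental group of $U$. The potential nuisance of tangential base points, namely whether the chosen lift of $\vec b$ to $U$ is rational over $F$, is exactly what the abelianization dissolves: any two lifts differ by an inner automorphism, and inner automorphisms act trivially on $\widehat{H}^{\mathrm{ab}}$, so both $\widetilde{\rho}_X$ and its agreement with $\rho_X$ are insensitive to that choice. This insensitivity is the structural reason the profinite, abelianized construction runs more smoothly here than the discrete theory of Grunewald--Lubotzky.
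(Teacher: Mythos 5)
Your construction itself is sound, and is in fact leaner than the paper's: the paper passes through the Galois closure $\widetilde{\lambda}:Y \to \mathbb{P}^1_{\overline{\mathbb{Q}}}$, realizes $T_{\widetilde{\lambda}}(Y)$ as $\overline{R}=R/[R,R]$ for the \emph{normal} subgroup $R=\ker(\pi)$, and recovers $T_\lambda(X)$ only up to finite index via $\mathcal{J}$-coinvariants and a Hochschild--Serre argument, whereas you realize $T_\lambda(X)$ directly as $\widehat{H}^{\mathrm{ab}}$ for the open (not necessarily normal) subgroup $\widehat{H}=\pi_1^{\mathrm{et}}(U)$ and let its stabilizer in $A$ act by restriction. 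Up to and including the finite-index claim for $A_X=\mathrm{Stab}_A(\widehat{H})$, and the $G_F$-equivariant identification $T_\lambda(X)\cong \widehat{H}^{\mathrm{ab}}$, your argument is correct.

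The gap is in the final comparison, and it is exactly the point the paper spends Lemma \ref{lem:extendit} on. Your claim that $\iota(\sigma)|_{\widehat{H}}$ is a lift of the canonical outer $\sigma$-action on $\pi_1^{\mathrm{et}}(U)$, so that abelianization yields $\rho_X(\sigma)$, is unjustified. Concretely, $\iota(\sigma)$ is conjugation by Bely\u{\i}'s element $\sigma'\in \mathrm{Gal}(L/\mathbb{Q}(t))$, while $\rho_X(\sigma)$ is induced by conjugation by an arithmetic lift $\widehat{\sigma}\in \mathrm{Gal}(L/F(X))=\pi_1^{\mathrm{et}}(U_F)$. These differ by $g=\sigma'\widehat{\sigma}^{-1}\in \widehat{F}_2$, and the hypothesis $\iota(\sigma)\in A_X$ only tells you that $g$ \emph{normalizes} $\widehat{H}$, not that $g\in\widehat{H}$. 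Conjugation by an element of $N_{\widehat{F}_2}(\widehat{H})\setminus \widehat{H}$ is not inner on $\widehat{H}$ and does not act trivially on $\widehat{H}^{\mathrm{ab}}$: it acts through the deck group $N_{\widehat{F}_2}(\widehat{H})/\widehat{H}$ of $U\to V$, and this action is non-trivial in general --- when $\lambda$ is Galois it is precisely the $H$-action on $\overline{R}$, faithful already on $\mathbb{Q}\otimes\overline{R}\cong \mathbb{Q}\oplus\mathbb{Q}[H]$, whose non-triviality the whole paper is built on. So ``abelianization kills inner automorphisms'' disposes of ambiguity by $\mathrm{Inn}(\widehat{H})$ (your base-point worry), but the actual ambiguity lives in $N_{\widehat{F}_2}(\widehat{H})/\widehat{H}$: Bely\u{\i}'s $\sigma'$ is pinned down by conditions at $0$ and $1$ (it preserves $\widehat{\langle x\rangle}$ and sends $\widehat{\langle y\rangle}$ to a commutator-conjugate), and nothing in that normalization makes it act on $\overline{\mathbb{Q}}(X)$ the way $\widehat{\sigma}$ does. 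The paper closes this with Lemma \ref{lem:extendit}: after enlarging $F$ to a suitable finite extension $F^\dagger$ (built using an auxiliary abelian $(\mathbb{Z}/s\times\mathbb{Z}/s)$-cover, $s$ the exponent of $H$), Bely\u{\i}'s $\sigma'$ lies in $\mathrm{Gal}(L/F^\dagger(Y))$ for all $\sigma\in G_{F^\dagger}$, and the field enlargement is then converted into a finite-index shrinking of the subgroup of $A$ using that $\iota$ is a homeomorphism onto its image. Your proof needs an argument of this kind --- or at least an argument that $\sigma\mapsto g_\sigma \bmod \widehat{H}$ is a continuous cocycle into the finite group $N_{\widehat{F}_2}(\widehat{H})/\widehat{H}$, hence trivial on an open subgroup. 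Either way, agreement holds only after a further finite-index shrinking, which the statement of the theorem permits but which your proof, as written, never performs.
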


To prove Theorem \ref{thm:main1}, let us recall the results in \cite{LG}, which are not needed here, but which were the inspiration for the current work. In that paper, the following situation was studied. Let $F=F_d$ be the free group on $d\ge 2$ generators and let $\pi:F_d\to H$ be an epimorphism onto a finite group $H$, with $\mathcal{R}=\mathrm{Ker}(\pi)$. Let $\mathcal{A}=\mathrm{Aut}(F_d)$ and $\mathcal{A}(\pi)=\{\alpha\in\mathcal{A} : \pi\circ\alpha=\pi\}$. The finite index subgroup $\mathcal{A}(\pi)$ of $\mathcal{A}$ preserves $\mathcal{R}$. Hence, we get a linear representation $\rho_\pi: \mathcal{A}(\pi)\to\mathrm{GL}(\overline{\mathcal{R}})$, where $\overline{\mathcal{R}} = \mathcal{R}/[\mathcal{R},\mathcal{R}]$ is a $\mathbb{Z}$-module of rank $|H|(d-1)+1$. The image of $\rho_\pi$ is inside the arithmetic group of all the $H$-module automorphisms of $\overline{\mathcal{R}}$. The main result of \cite{LG} is the claim that \textbf{under some technical condition on $\pi$} (``$\pi$ is redundant") $\rho_\pi(\mathcal{A}(\pi))$ is, after projectivization, an arithmetic group. This provides a rich class of arithmetic virtual quotients of $\mathcal{A}=\mathrm{Aut}(F_d)$.

In \S \ref{s:profinite}, we develop the analogous profinite theory for all $d \ge 1$. Let $\pi$ now be a continuous epimorphism  $\pi:\widehat{F}_d\to H$, and let $A(\pi)=\{\alpha\in\mathrm{Aut}(\widehat{F}_d) : \pi\circ\alpha=\pi\}$.  Let $R=\mathrm{Ker}(\pi)$. Then $\overline{R} = R/[R,R]$ is a $\widehat{\mathbb{Z}}[H]$-module giving rise to $\rho:A(\pi)\to\mathrm{GL}(\overline{R})$. The result here is stronger than in the discrete case and without any assumption: 

\begin{theorem}
\label{thm:main2}
For every $d\ge 1$ and every such $\pi$, the image $\rho(A(\pi))$ equals the subgroup $\mathrm{Aut}_{H,\beta}(\overline{R})$ of all $\widehat{\mathbb{Z}}[H]$-module automorphisms of $\overline{R}$ preserving the extension class $\beta$ of the short exact sequence
\begin{equation}
\label{eq:beta}
1 \to \overline{R} \to \widehat{F}_d/[R,R] \xrightarrow{\overline{\pi}} H \to 1 
\end{equation}
where $\overline{\pi}$ is induced by $\pi$.
\end{theorem}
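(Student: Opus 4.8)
The plan is to prove the two inclusions separately, the containment $\rho(A(\pi)) \subseteq \Aut_{H,\beta}(\overline{R})$ being routine and the reverse containment being the heart of the matter. Write $N = \widehat{F}_d/[R,R]$ for the profinite group in the middle of \eqref{eq:beta}, so that $\pi$ factors as $\widehat{F}_d \xrightarrow{q} N \xrightarrow{\overline{\pi}} H$. Any $\alpha \in A(\pi)$ satisfies $\pi\circ\alpha = \pi$, hence $\alpha(R) = R$ and $\alpha([R,R]) = [R,R]$; thus $\alpha$ descends to an automorphism $\overline{\alpha}$ of $N$ that restricts to $\rho(\alpha)$ on $\overline{R}$ and induces the identity on $H$. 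The mere existence of such an $\overline{\alpha}$ lying over $\mathrm{id}_H$ forces $\rho(\alpha)$ to be $\widehat{\mathbb{Z}}[H]$-linear (conjugation by $\overline{\alpha}(\widetilde{h})$ and by $\widetilde{h}$ agree on the abelian group $\overline{R}$) and to fix the class $\beta$, which gives the easy inclusion. For the reverse inclusion I would fix $\phi \in \Aut_{H,\beta}(\overline{R})$ and construct an $\alpha \in A(\pi)$ with $\rho(\alpha) = \phi$ in two moves: first lift $\phi$ to an automorphism of $N$, then lift that automorphism all the way to $\widehat{F}_d$.

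For the first move I would invoke the standard interpretation of $H^2$ in the theory of group extensions, in its continuous/profinite form (with $H$ finite and $\overline{R}$ a profinite $\widehat{\mathbb{Z}}[H]$-module). An $H$-equivariant automorphism $\phi$ of $\overline{R}$ extends to an automorphism $\widetilde{\phi}$ of $N$ inducing the identity on $H$ if and only if the pushout of the extension \eqref{eq:beta} along $\phi$ is isomorphic to \eqref{eq:beta} as an extension of $H$ by $\overline{R}$, that is, if and only if $\phi_*\beta = \beta$ in $H^2(H,\overline{R})$. Since $\phi$ was chosen to preserve $\beta$, such a $\widetilde{\phi}$ exists, with $\widetilde{\phi}|_{\overline{R}} = \phi$ and $\overline{\pi}\circ\widetilde{\phi} = \overline{\pi}$.

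The second move is where I would use the freeness of $\widehat{F}_d$ decisively. Fix a basis $x_1,\dots,x_d$ of $\widehat{F}_d$. Both $(q(x_i))_i$ and $(\widetilde{\phi}(q(x_i)))_i$ are generating $d$-tuples of the $d$-generated profinite group $N$, and the former lifts through $q$ to the basis $(x_i)_i$. The profinite version of Gaschütz's lemma then guarantees that the latter $d$-tuple \emph{also} lifts to a basis $(y_i)_i$ of $\widehat{F}_d$; declaring $\alpha(x_i) = y_i$ defines an \emph{automorphism} $\alpha$ of $\widehat{F}_d$ satisfying $q\circ\alpha = \widetilde{\phi}\circ q$. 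Then $\pi\circ\alpha = \overline{\pi}\circ q\circ\alpha = \overline{\pi}\circ\widetilde{\phi}\circ q = \overline{\pi}\circ q = \pi$, so $\alpha \in A(\pi)$; and $\alpha$ induces $\widetilde{\phi}$ on $N$, whose restriction to $\overline{R}$ is $\phi$, so $\rho(\alpha) = \phi$, which completes the argument.

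I expect the crux to be exactly this second move: producing an \emph{honest automorphism} of $\widehat{F}_d$, rather than a mere endomorphism, that lifts $\widetilde{\phi}$. Naively lifting the generators only yields a continuous endomorphism whose surjectivity is unclear, and here the Hopfian property alone does not rescue the situation, since $[R,R]$ is \emph{not} contained in the Frattini subgroup of $\widehat{F}_d$ (the free group surjects onto nonabelian simple groups on which $R$ is onto), so inducing an automorphism of $N$ does not by itself force surjectivity. Gaschütz's lemma is precisely what circumvents this, by lifting generating tuples to bases. Its unconditional availability in the profinite category, for every $d\ge 1$ and with no hypothesis on $\pi$, is what makes the profinite theory cleaner than the discrete theory of \cite{LG}, where the analogue of this lifting fails and the ``redundant'' hypothesis together with the restriction $d\ge 3$ must be imposed.
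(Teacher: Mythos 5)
Your proof is correct and takes essentially the same route as the paper: the two moves of your reverse inclusion are exactly the paper's Lemma \ref{lem:second} (extension-class criterion for lifting $\phi$ to an automorphism of $\widehat{F}_d/[R,R]$ over the identity on $H$) followed by the paper's Corollary \ref{cor:firstsecond} (lifting to an automorphism of $\widehat{F}_d$ via Gasch\"utz's Lemma \ref{lem:first}), applied in the proof of Theorem \ref{thm:main} with $\widetilde{I}=[R,R]$. Your inline argument with generating $d$-tuples, including the point that Gasch\"utz rather than the Hopfian property alone is what produces an honest automorphism, is precisely the content of that corollary.
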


See Theorem \ref{thm:main} below for an even more general result. This theorem is deduced from a powerful lemma of Gasch\"utz which holds for (pro)finite groups but not for discrete groups. This explains why Theorem \ref{thm:main2} does not hold in general for discrete groups; see also \cite[Prop. 9.7]{LG}.

Theorems \ref{thm:main1} and \ref{thm:main2} are connected with each other because of Bely\u{\i}'s theorem.  Let $H$ be the Galois group of the normal closure $\widetilde{\lambda}: Y \to \mathbb{P}^1_{\overline{\mathbb{Q}}}$ of a Bely\u{\i} cover $\lambda:X\to\mathbb{P}^1_{\overline{\mathbb{Q}}}$, and let $\mathcal{J}$ be the Galois group of $Y$ over $X$.  The \'etale fundamental group of $\mathbb{P}^1_{\overline{\mathbb{Q}}} - \{0,1,\infty\}$ with respect to a geometric base point is naturally isomorphic to the free profinite group $\widehat{F}_2$.  Letting $\pi:\widehat{F}_2 \to H$ be the homomorphism associated to $\widetilde{\lambda}$, we get from Theorem \ref{thm:main2} a lift to a finite index subgroup of $A(\pi)$ of the action of a subgroup of finite index in $G_{\mathbb{Q}}$ on $T_{\widetilde{\lambda}}(Y)$.  Taking $\mathcal{J}$-coinvariants  of this lift and then dividing by the finite torsion subgroup of these coinvariants leads to Theorem \ref{thm:main1}; see Theorem \ref{thm:restriction} for details.

One natural question is whether similar results hold if one replaces  the adelic Tate module $T_\lambda(X)$ of the generalized Jacobian of $X$ with respect to the ramification locus of $\lambda:X \to \mathbb{P}^1_{\overline{\mathbb{Q}}}$ with the adelic Tate module $T(X)$ of the Jacobian of $X$.  Using Theorem \ref{thm:main1}, we will give in \S \ref{s:JacobianAlone} a sufficient condition for the lift of the action on $T_\lambda(X)$ which we construct to descend to an action on $T(X)$; see Theorem \ref{thm:preserve2}.   If $X = Y$ then this condition is also necessary.  The condition is not always satisfied, but it is if, for example, the Galois group $H$ of the normal closure of $\lambda$ is abelian. Theorem \ref{thm:irred} is proved by applying Theorem \ref{thm:preserve2} to a suitable Bely\u{\i} cover when $X = E_\zeta$.  

Theorem \ref{thm:main2} gives the precise image of $A(\pi)$ in $\mathrm{GL}(\overline{R})$ and hence also in $\mathrm{GL}(T_\lambda(X))$. It is of interest to compare it with the image of $G_F$ in $\mathrm{GL}(T_\lambda(X))$. In \S \ref{s:examples} we analyze the case in which $X$ has genus $1$ and $\lambda:X \to \mathbb{P}^1_{\overline{\mathbb{Q}}}$ is Galois. 

We end this introduction with some comments and questions.  The main open question concerning the Grothendieck-Teichm\"uller group $\widehat{GT}$ is whether there is a variant of this group that is equal to the image of $G_{\mathbb{Q}}$ under the Bely\u{\i} embedding $\iota: G_{\mathbb{Q}}\to A$. If this is true then  every linear representation of $G_{\mathbb{Q}}$ must lift to the variant of $\widehat{GT}$.   In view of Theorems \ref{thm:irred} and \ref{thm:main1} above, it would be interesting to find other natural representations of finite index subgroups of $G_{\mathbb{Q}}$ that lift to representations of finite index subgroups of $A$.  For example, the Galois representations provided by the adelic Tate modules of the Jacobians of modular curves arise in the theory of modular forms of weight two for congruence subgroups of $\mathrm{PSL}_2(\mathbb{Z})$;  see, for example, \cite{ShimuraBook} and \cite{HidaModForms}.   It would be interesting if other Galois representations provided by modular forms of other weights, and more generally by $\Lambda$-adic modular forms, have lifts to finite index subgroups of $A$.   At a minimum, one would like to check whether such representations can be lifted to finite index subgroups of the Grothendieck-Teichm\"uller group $\widehat{GT}$. For related questions of this kind, see the end of  \cite[\S1]{LochakSchneps}.

\medbreak

\noindent {\bf Acknowledgements.} The first and second authors would like to thank the Hebrew University in Jerusalem for its support and hospitality during work on this paper.  The first author was supported in part by NSF FRG Grant No.\ DMS-1360621 and by NSF Grant No.\ DMS-1801328. The second author was supported in part by NSF FRG Grant No.\ DMS-1360767,  NSF SaTC Grants No. CNS-1513671 and CNS-1701785, and Simons Foundation Grant No.\ 338379. The third author is indebted for support from the NSF (Grant No.\ DMS-1700165)  and the European Research Council (ERC) under the European Union’s Horizon 2020 research and innovation program (Grant No.\ 692854). This material is based upon work supported by a grant from the Institute for Advanced Study.

\section{A construction of linear representations of $\mathrm{Aut}(\widehat{F}_d)$.}
\label{s:profinite}

Let $\pi:\widehat{F}_d \to H$ be a continuous epimorphism from $\widehat{F}_d$ to a finite group $H$, and let $R$ be the kernel of $\pi$.  Define $\overline{R} = R/[R,R]$ to be the maximal abelian quotient of $R$.  Then $\overline{R}$ is isomorphic to $\widehat{\mathbb{Z}}^{|H|(d-1)+1}$ where $\widehat{\mathbb{Z}}$ is the profinite completion of $\mathbb{Z}$. We consider surjections $R \to \widetilde{R}$ whose kernel $\widetilde{I}$ is a closed normal subgroup of $\widehat{F}_d$ containing $[R,R]$, so that $\widetilde{R}$ is a quotient of $\overline{R}$.  We will be interested in constructing large linear representations of the group
\begin{equation}
\label{eq:stab}
A(\pi,\widetilde{I})  = \{\alpha \in \mathrm{Aut}(\widehat{F}_d): \pi \circ \alpha = \pi \quad \mathrm{and} \quad \alpha(\widetilde{I}) = \widetilde{I} \}
\end{equation}
where we set $A(\pi)=A(\pi,[R,R])$.  Since $\widehat{F}_d$ is finitely generated as a profinite group, there are finitely many surjections $\widehat{F}_d \to H$, and $A(\pi)$ is of finite index in $\mathrm{Aut}(\widehat{F}_d)$.  In general, $A(\pi, \widetilde{I})$ need not have finite index in $\mathrm{Aut}(\widehat{F}_d)$, but it will if $\widetilde{I}$ is a characteristic subgroup of $R$.

We have an exact sequence
\begin{equation}
\label{eq:cut} 
1 \to \widetilde{R} \to \widehat{F}_d/\widetilde{I} \to H \to 1 
\end{equation} 
with some extension class $\beta \in H^2(H, \widetilde{R})$.  Since $\widetilde{R}$ is abelian and profinite, there is a well-defined continuous action of $H$ on $\widetilde{R}$.  This gives rise to a linear representation $\rho:A(\pi, \widetilde{I})\to\mathrm{GL}(\widetilde{R})$.

Let $\mathrm{Aut}_H(\widetilde{R})$ be the group of continuous automorphisms of $\widetilde{R}$ that commute with the action of $H$. Each element of $\mathrm{Aut}_H(\widetilde{R})$ induces an automorphism of $H^2(H,\widetilde{R})$.  We define 
\begin{equation}
\label{eq:OK}
\mathrm{Aut}_{H,\beta}(\widetilde{R}) = \{\gamma \in \mathrm{Aut}_H(\widetilde{R}): \gamma_*(\beta) = \beta\}.
\end{equation}
Since $\widetilde{R}$ is a quotient of a finitely generated $\widehat{\mathbb{Z}}$-module, $H^2(H,\widetilde{R})$ is finite, and $\mathrm{Aut}_{H,\beta}(\widetilde{R})$ has finite index in $ \mathrm{Aut}_H(\widetilde{R})$.

We will prove the following result, which implies Theorem \ref{thm:main2}.

\begin{theorem}
\label{thm:main} 
For every $d\ge 1$ and every $\pi$ and $\widetilde{I}$, the image $\rho(A(\pi,\widetilde{I}))$ equals $\mathrm{Aut}_{H,\beta}(\widetilde{R})$. In other words, the action of $A(\pi,\widetilde{I})$ on $\widetilde{R}$ gives a surjection
\begin{equation}
\label{eq:surjective}
A(\pi,\widetilde{I})  \to \mathrm{Aut}_{H,\beta}(\widetilde{R}).
\end{equation}
\end{theorem}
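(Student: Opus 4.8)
The plan is to prove both inclusions $\rho(A(\pi,\widetilde{I})) \subseteq \mathrm{Aut}_{H,\beta}(\widetilde{R})$ and $\mathrm{Aut}_{H,\beta}(\widetilde{R}) \subseteq \rho(A(\pi,\widetilde{I}))$. The first inclusion should be straightforward and essentially formal: any $\alpha \in A(\pi,\widetilde{I})$ satisfies $\pi \circ \alpha = \pi$ and $\alpha(\widetilde{I}) = \widetilde{I}$, so $\alpha$ descends to an automorphism $\bar\alpha$ of the middle term $\widehat{F}_d/\widetilde{I}$ of the extension \eqref{eq:cut} that induces the identity on $H$ (because $\pi \circ \alpha = \pi$) and restricts to a continuous $H$-equivariant automorphism $\rho(\alpha)$ of $\widetilde{R}$. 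Since $\bar\alpha$ fits into a morphism of extensions covering the identity on $H$, functoriality of the extension class shows $\rho(\alpha)_*(\beta) = \beta$, so indeed $\rho(\alpha) \in \mathrm{Aut}_{H,\beta}(\widetilde{R})$. The substance of the theorem is the reverse inclusion, namely surjectivity of \eqref{eq:surjective}.

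For surjectivity, I would start from an arbitrary $\gamma \in \mathrm{Aut}_{H,\beta}(\widetilde{R})$ and aim to realize it as $\rho(\alpha)$ for some $\alpha \in A(\pi,\widetilde{I})$. The key idea is that preserving the extension class $\beta$ means precisely that $\gamma$ can be realized by an automorphism of the extension group $E = \widehat{F}_d/\widetilde{I}$: since $\gamma_*(\beta) = \beta$, there is a continuous automorphism $\delta$ of $E$ inducing $\gamma$ on $\widetilde{R}$ and the identity on $H$. (This is the standard cohomological interpretation of $H^2$: two extensions with the same class are equivalent, and an isomorphism of $\widetilde{R}$ preserving $\beta$ lifts to an equivalence of the pushed-forward extension with the original one; translating this equivalence into an automorphism of $E$ over $\mathrm{id}_H$ is a diagram chase.) The heart of the matter is then to lift $\delta$ along the surjection $\widehat{F}_d \to \widehat{F}_d/\widetilde{I} = E$ to an automorphism $\alpha$ of $\widehat{F}_d$ that stabilizes $\widetilde{I}$ and satisfies $\pi \circ \alpha = \pi$.

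This lifting step is where the lemma of Gaschütz enters, and I expect it to be the main obstacle. The point is that I want to build $\alpha \in \mathrm{Aut}(\widehat{F}_d)$ compatible with the prescribed $\delta$ on the quotient $E$; choosing $\alpha$ amounts to choosing images of a generating set of $\widehat{F}_d$ that both project correctly to $E$ under $\delta$ and generate $\widehat{F}_d$, so that $\alpha$ is genuinely an automorphism. It is precisely here that the profinite setting is decisive: Gaschütz's lemma guarantees that a lift of a generating set of a finite (or profinite) quotient to a generating set of the covering group can be completed, so one can always arrange the images of the generators to lift a given generating tuple of $E$ to a generating tuple of $\widehat{F}_d$. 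In the discrete case no such guarantee exists, which is exactly the reason Theorem \ref{thm:main2} fails for discrete groups and why the hypothesis ``$\pi$ is redundant'' was needed in \cite{LG}. Concretely, I would: first use the $H^2$-interpretation to produce $\delta \in \mathrm{Aut}(E)$ over $\mathrm{id}_H$ inducing $\gamma$; then pick a minimal generating tuple of $\widehat{F}_d$, apply $\delta$ and the surjection to compute target images in $E$, and invoke Gaschütz to lift these to a generating tuple of $\widehat{F}_d$ defining $\alpha \in \mathrm{Aut}(\widehat{F}_d)$; and finally verify by construction that $\pi \circ \alpha = \pi$, that $\alpha(\widetilde{I}) = \widetilde{I}$ (since $\alpha$ covers $\delta$ which fixes the normal subgroup $\widetilde{R} = \widetilde{I}/[\cdots]$ setwise and $\delta$ restricts correctly), and that $\rho(\alpha) = \gamma$. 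Combining the two inclusions yields the equality $\rho(A(\pi,\widetilde{I})) = \mathrm{Aut}_{H,\beta}(\widetilde{R})$.
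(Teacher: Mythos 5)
Your proposal is correct and takes essentially the same route as the paper: the easy inclusion follows from functoriality of the extension class (the paper's Lemma \ref{lem:second}), and surjectivity is obtained by first realizing $\gamma$ as an automorphism of $\widehat{F}_d/\widetilde{I}$ covering $\mathrm{id}_H$ (Lemma \ref{lem:second} again) and then lifting that automorphism to $\mathrm{Aut}(\widehat{F}_d)$ via Gasch\"utz's lemma, which is exactly the paper's Corollary \ref{cor:firstsecond}. The verifications you outline ($\pi\circ\alpha=\pi$, $\alpha(\widetilde{I})=\widetilde{I}$, $\rho(\alpha)=\gamma$) are precisely the unstated details behind the paper's one-line conclusion that the lift lies in $A(\pi,\widetilde{I})$.
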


This construction is the profinite analog of the one used in \cite{LG} to construct linear representations of $\mathrm{Aut}(F_d)$. Note, though, that Theorem \ref{thm:main} shows that in the profinite case  the representation is surjective.  In the discrete case this may not be true;  see \cite[Prop. 9.7]{LG}. Here is a concrete example.

\begin{example}
\label{ex:first}
Let  $H$ be a cyclic group of prime order $p$, and let $k=\mathbb{Q}(\zeta_p)$ for a primitive $p^{\mathrm{th}}$ root of unity $\zeta_p$ in $\overline{\mathbb{Q}}$.  Suppose $\pi:F_d\to H$ is a surjection with kernel $\mathcal{R}$. In this case, $\mathbb{Q}\otimes_{\mathbb{Z}} \mathcal{R}/[\mathcal{R},\mathcal{R}]$ is isomorphic to $\mathbb{Q}[H]^{d-1}\oplus \mathbb{Q}$ as a $\mathbb{Q}[H]$-module, i.e. to $k^{d-1}\oplus \mathbb{Q}^d$ (see, for example, \cite[\S2]{Gruenberg}). Theorem 1.4 of \cite{LG} implies that if $d \ge 3$, the finite index subgroup $\mathcal{A}(\pi)$ of $\mathcal{A}=\mathrm{Aut}(F_d)$ has a ``large" image  in the $H$-module automorphism group of $k^{d-1}\oplus \mathbb{Q}^{d}$ in the following sense.  After choosing a suitable basis for $\mathbb{Q} \otimes_{\mathbb{Z}} \mathcal{R}/[\mathcal{R},\mathcal{R}] \cong k^{d-1} \oplus \mathbb{Q}^d$, the $\mathbb{Z}[H]$ submodule $\mathcal{R}/[\mathcal{R},\mathcal{R}]$ is commensurable with $\mathbb{Z}[\zeta_p]^{d-1} \oplus \mathbb{Z}^{d}$, where a chosen generator $h \in H$ acts on $\mathbb{Z}[\zeta_p]$ by multiplication by $\zeta_p$ and $h$ acts trivially on $\mathbb{Z}$.  The intersection of $\rho_\pi(\mathcal{A}(\pi))$ with  $\mathrm{GL}_{d-1}(\mathbb{Z}[\zeta_p])\times \mathrm{GL}_d(\mathbb{Z})$ has finite index in $\rho_\pi(\mathcal{A}(\pi))$, and the intersection $\rho_\pi(\mathcal{A}(\pi)) \cap \left(\mathrm{SL}_{d-1}(\mathbb{Z}[\zeta_p])\times \mathrm{SL}_d(\mathbb{Z})\right)$ is of finite index in $\mathrm{SL}_{d-1}(\mathbb{Z}[\zeta_p])\times \mathrm{SL}_d(\mathbb{Z})$. This way it is shown in \cite{LG} that $\mathrm{Aut}(F_d)$ has many arithmetic subgroups as quotients. It was left open in \cite{LG} whether the image $\rho_\pi(\mathcal{A}(\pi))$ is of finite index even in $\mathrm{GL}_{d-1}(\mathbb{Z}[\zeta_p])\times \mathrm{GL}_d(\mathbb{Z})$. This is the question ``(SL) or not (SL)?" discussed in \cite[\S8]{LG}. By using the recently proved result that $\mathrm{Aut}(F_d)$ has Kazhdan property (T) for $d\ge 4$ (see \cite{PropertyT5}, \cite{PropertyT}, \cite{PropertyT4}), so that the abelianization of each finite index subgroup is finite, it follows that (at least when $d\ge 4$), the answer to the ``(SL) or not (SL)?'' question is (SL). In other words, the image there is always commensurable with the SL group. So it is of infinite index in $\mathrm{GL}_{d-1}(\mathbb{Z}[\zeta_p])\times \mathrm{GL}_d(\mathbb{Z})$ if $p>3$.

Theorem \ref{thm:main}  gives a more precise and stronger result in the category of profinite groups.  As remarked above, $\overline{R} = R/[R,R]\cong \widehat{\mathbb{Z}}^{(d-1)p} \oplus \widehat{\mathbb{Z}}$.  Concerning the action of $H$, we can prove a more precise statement if we divide  $\overline{R}$ by its pro-$p$ Sylow subgroup $\overline{R}(p)$ (i.e. by the  $\mathbb{Z}_p$-factors). Let $\widetilde{I}$ be the inverse image of $\overline{R}(p)$ under the natural map $R \to \overline{R}$.  Define
$$\widehat{\mathbb{Z}}_{(p)} = \widehat{\mathbb{Z}}/\mathbb{Z}_p\cong \prod_{\ell\ne p} \mathbb{Z}_\ell.$$
Then $\widetilde{R} = \overline{R} /\overline{R}(p) = R/\widetilde{I}$ is isomorphic to $\widehat{\mathbb{Z}}_{(p)}^{|H|(d-1)+1}$ but this time we can say that $\widetilde{R}$ is isomorphic to $\widehat{\mathbb{Z}}_{(p)}[H]^{d-1}\oplus \widehat{\mathbb{Z}}_{(p)} \cong \widehat{\mathbb{Z}}_{(p)}[\zeta_p]^{d-1} \oplus \widehat{\mathbb{Z}}_{(p)}^{d}$ as an $H$-module since $|H|$ is a unit in $\widehat{\mathbb{Z}}_{(p)}$. Theorem \ref{thm:main} implies that $A(\pi,\widetilde{I})$, which is a finite index subgroup of $\mathrm{Aut}(\widehat{F}_d)$, is mapped \textbf{onto} $\mathrm{GL}_{d-1}(\widehat{\mathbb{Z}}_{(p)}[\zeta_p])\times  \mathrm{GL}_d(\widehat{\mathbb{Z}}_{(p)})$. (Note that $\mathrm{SL}_{d-1}(\widehat{\mathbb{Z}}_{(p)}[\zeta_p])\times  \mathrm{SL}_d(\widehat{\mathbb{Z}}_{(p)})$ is of infinite index in $\mathrm{GL}_{d-1}(\widehat{\mathbb{Z}}_{(p)}[\zeta_p])\times  \mathrm{GL}_d(\widehat{\mathbb{Z}}_{(p)})$.)
\end{example}

More examples of this kind can be worked out by considering the various examples studied in \cite{LG}. The advantage of Theorem \ref{thm:main} over \cite[Theorem 4.1]{LG} is that it gives the exact image (and not only up to a finite index subgroup as in \cite{LG}, and only under some additional assumptions on $H$ and $\pi$). It implies that the question ``(SL) or not (SL)?'' from \cite[\S8]{LG} has a clear answer here: it is never just SL. Moreover, a careful look at Example \ref{ex:first} shows that not only $\mathrm{Aut}(\widehat{F}_d)$ has an infinite abelianization, since it clearly surjects onto $\mathrm{GL}_d(\widehat{\mathbb{Z}})$ and hence onto $\widehat{\mathbb{Z}}^*$, but that also $\mathrm{Aut}^1(\widehat{F}_d)$, which is defined as the preimage of $\mathrm{SL}_d(\widehat{\mathbb{Z}})$, has a finite index subgroup with an infinite abelianization. This implies (at least for $d\ge 4$) that $\mathrm{Aut}^1(F_d)$ is not dense in $\mathrm{Aut}^1(\widehat{F}_d)$, since the former has property (T).

\medskip

To prove Theorem \ref{thm:main} we need the following well-known result of Gasch\"utz;  see \cite{Ga} and  \cite[Lemma 17.7.2]{FJ}.

\begin{lemma}
\label{lem:first}
Suppose $\psi:G_1\to G_2$ is a continuous surjective group homomorphism of profinite groups. Suppose $S_2$ is a finite set of generators of $G_2$ with $|S_2|=d$, and that the minimal number of generators of $G_1$ is less than or equal to $d$. Then there exists a set of generators $S_1$ of $G_1$ such that  $\psi(S_1)=S_2$.
\end{lemma}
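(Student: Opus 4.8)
The plan is to reduce the profinite statement to the analogous statement for finite groups by a compactness argument, and then to establish the finite case by the classical counting method of P.\ Hall and Gasch\"utz. Write $K = \ker \psi$ and let $\bar y = (y_1, \ldots, y_d)$ enumerate $S_2$. Consider the closed, hence compact, subset
\[
Y = \prod_{i=1}^d \psi^{-1}(y_i) \subseteq G_1^d
\]
of tuples lifting $\bar y$; it is nonempty because $\psi$ is surjective. A tuple $\bar x \in Y$ produces the desired $S_1$ exactly when it generates $G_1$ topologically, i.e.\ when its image generates $G_1/N$ for every open normal subgroup $N \trianglelefteq G_1$. For each such $N$ let $Y_N \subseteq Y$ be the set of tuples whose image in $G_1/N$ generates $G_1/N$; this set is clopen in $Y$ since the condition depends only on $\bar x \bmod N$, and the family $\{Y_N\}$ is downward directed with $Y_{N'} \subseteq Y_N$ whenever $N' \subseteq N$. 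The sought-for set of tuples is $\bigcap_N Y_N$, so by the finite intersection property for the compact space $Y$ it suffices to prove that each $Y_N$ is nonempty.

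Fixing an open normal $N \trianglelefteq G_1$, I would pass to finite quotients. Since $\psi$ is a continuous surjection of compact groups it is open, so $\psi(N)$ is an open normal subgroup of $G_2$ and $\psi$ induces a surjection $\overline\psi : G_1/N \twoheadrightarrow G_2/\psi(N)$ of finite groups. The image of $\bar y$ generates $G_2/\psi(N)$, and $G_1/N$ is generated by at most $d$ elements because the minimal number of generators $d(G_1)\le d$ passes to quotients. Granting the finite case of the lemma, there is a generating $d$-tuple $\bar z$ of $G_1/N$ with $\overline\psi(\bar z) = \bar y\,\psi(N)$. This equality says precisely that for each $i$ the coset $z_i$, viewed in $G_1$, has image $y_i\psi(N)$, so $y_i \in \psi(z_i)$; hence $z_i$ can be lifted to some $x_i \in \psi^{-1}(y_i)$ with $x_i N = z_i$. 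The resulting $\bar x$ lies in $Y_N$, which is therefore nonempty.

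It remains to prove the finite case: if $\psi: G_1 \twoheadrightarrow G_2$ is a surjection of finite groups with $d(G_1) \le d$, then each generating $d$-tuple of $G_2$ lifts to a generating $d$-tuple of $G_1$. For a generating tuple $\bar y$ of $G_2$ let $a(\bar y)$ be the number of generating tuples of $G_1$ lying over it. The number of tuples over $\bar y$ contained in a fixed subgroup $U \le G_1$ equals $|U \cap K|^d$ when $\psi(U) = G_2$ and $0$ otherwise, so by M\"obius inversion over the subgroup lattice of $G_1$ (with $\mu$ the corresponding M\"obius function) one obtains
\[
a(\bar y) = \sum_{U \,:\, \psi(U) = G_2} \mu(U, G_1)\,|U \cap K|^d .
\]
The right-hand side is manifestly independent of $\bar y$; call the common value $a$. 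Summing $a(\bar y)$ over all generating tuples $\bar y$ of $G_2$ counts all generating tuples of $G_1$, since each such tuple maps onto a generating tuple of $G_2$; writing $\phi_G(d)$ for the number of generating $d$-tuples of $G$, this gives $\phi_{G_1}(d) = a \cdot \phi_{G_2}(d)$. As $d(G_1) \le d$ and $d(G_2) \le d$, both Eulerian functions are positive, forcing $a > 0$; hence every generating tuple of $G_2$ admits a generating lift.

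The main obstacle is the independence of $a(\bar y)$ from the chosen tuple $\bar y$, which is the heart of Gasch\"utz's argument. The M\"obius inversion delivers it cleanly, but it is essential to count lifts contained in \emph{every} subgroup $U$, not merely in $G_1$ itself, so that the dependence on $\bar y$ cancels uniformly across the lattice. By contrast, the profinite packaging — rewriting topological generation as an intersection of clopen conditions and invoking compactness — is routine; and it is precisely this compactness step that has no counterpart for discrete groups, which is why the corresponding surjectivity can fail there.
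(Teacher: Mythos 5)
Your proof is correct. Note that the paper does not prove this lemma at all: it is quoted as a known result of Gasch\"utz, with citations to \cite{Ga} and \cite[Lemma 17.7.2]{FJ}. Your argument is essentially the standard proof found in that literature: the reduction of the profinite statement to finite quotients (your compactness argument with the clopen sets $Y_N$ is an equivalent packaging of the usual inverse-limit argument), followed by the classical Hall--Gasch\"utz counting argument for the finite case, where M\"obius inversion over the subgroup lattice shows the number $a(\bar y)$ of generating lifts is independent of the generating tuple $\bar y$, and positivity of the Eulerian function $\phi_{G_1}(d)$ (using $d(G_1)\le d$) forces $a>0$. All the individual steps check out, including the two points where care is needed: the count of lifts inside a subgroup $U$ is $|U\cap K|^d$ or $0$ according to whether $\psi(U)=G_2$, which uses that $\bar y$ generates $G_2$; and $\psi(N)$ is open normal in $G_2$ (either by openness of $\psi$, or because $\psi(N)$ is a closed subgroup of finite index). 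So you have supplied a complete proof of a statement the paper merely imports.
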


This lemma has the following easy corollary (compare to \cite[\S2]{LubotzkyProfinite}).  

\begin{corollary}
\label{cor:firstsecond}
If $N$ is a closed normal subgroup of $\widehat{F}_d$, then every automorphism of $\widehat{F}_d/N$ can be lifted to an automorphism of $\widehat{F}_d$ preserving $N$. In particular, this holds for $N=R$.
\end{corollary}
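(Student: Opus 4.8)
The plan is to derive Corollary~\ref{cor:firstsecond} directly from Gasch\"utz's Lemma~\ref{lem:first} by lifting generators. Let $N$ be a closed normal subgroup of $\widehat{F}_d$, write $q:\widehat{F}_d\to\widehat{F}_d/N$ for the quotient map, and let $\bar\alpha\in\Aut(\widehat{F}_d/N)$ be an arbitrary continuous automorphism. First I would fix a free generating set $x_1,\dots,x_d$ of $\widehat{F}_d$ and consider the images $\bar s_i:=\bar\alpha(q(x_i))$ in $\widehat{F}_d/N$. Since $\bar\alpha$ is an automorphism and the $q(x_i)$ generate $\widehat{F}_d/N$, the set $S_2=\{\bar s_1,\dots,\bar s_d\}$ is a generating set of $\widehat{F}_d/N$ of size (at most) $d$.

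Next I would apply Gasch\"utz's lemma to the surjection $q:\widehat{F}_d\to\widehat{F}_d/N$ with this generating set $S_2$. The minimal number of generators of $\widehat{F}_d$ is $d$, which is certainly $\le d$, so the hypotheses of Lemma~\ref{lem:first} are met, and we obtain a generating set $S_1=\{s_1,\dots,s_d\}$ of $\widehat{F}_d$ with $q(s_i)=\bar s_i$ for each $i$. (One minor point to check is that the $\bar s_i$ are genuinely distinct so that $|S_2|=d$; if $\bar\alpha$ collapses two of the generators the set could be smaller, but then one pads $S_2$ arbitrarily up to size $d$, or equivalently appeals to the fact that $\widehat{F}_d/N$ can always be generated by exactly $d$ elements. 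I will phrase the application so that $S_2$ has exactly $d$ elements.) Now, because $x_1,\dots,x_d$ freely generate $\widehat{F}_d$, the universal property of the free profinite group yields a unique continuous endomorphism $\alpha:\widehat{F}_d\to\widehat{F}_d$ with $\alpha(x_i)=s_i$.

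Then I would verify that $\alpha$ is the desired lift. Since $\alpha$ sends the generating set $\{x_i\}$ to the generating set $\{s_i\}$, its image is all of $\widehat{F}_d$, so $\alpha$ is surjective; and a continuous surjective endomorphism of a finitely generated profinite group is automatically injective (these groups are Hopfian), hence $\alpha\in\Aut(\widehat{F}_d)$. Compatibility with $q$ is immediate on generators: $q(\alpha(x_i))=q(s_i)=\bar s_i=\bar\alpha(q(x_i))$, so the two continuous maps $q\circ\alpha$ and $\bar\alpha\circ q$ agree on a topological generating set and therefore coincide. This identity $q\circ\alpha=\bar\alpha\circ q$ shows both that $\alpha(N)=N$ (as $\bar\alpha$ is an automorphism, an element lies in $N=\Ker q$ iff its $\alpha$-image does) and that $\alpha$ descends to $\bar\alpha$ on $\widehat{F}_d/N$, which is exactly the assertion.

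The only real obstacle is confirming that Gasch\"utz's lemma is applicable, i.e.\ that the number of generators needed for $\widehat{F}_d$ does not exceed the size of the chosen generating set of the quotient; this is what forces us to work with a generating set of $\widehat{F}_d/N$ of size exactly $d$, matched to the rank of $\widehat{F}_d$. This is precisely the feature that fails for discrete free groups, where one cannot in general lift an arbitrary generating set of a quotient to one of the same size, and it is the reason the profinite statement holds unconditionally while the discrete analogue does not. The final specialization to $N=R=\Ker(\pi)$ is then immediate, since $R$ is a closed normal subgroup of $\widehat{F}_d$.
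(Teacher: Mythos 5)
Your proof is correct and is essentially the argument the paper intends: the paper presents this as an ``easy corollary'' of Gasch\"utz's lemma (Lemma \ref{lem:first}), and your generator-lifting derivation --- lift the elements $\bar\alpha(q(x_i))$ through $q$ via Gasch\"utz, extend to an endomorphism $\alpha$ of $\widehat{F}_d$ by the universal property of the free profinite group, conclude $\alpha$ is an automorphism because finitely generated profinite groups are Hopfian, and read off $\alpha(N)=N$ from $q\circ\alpha=\bar\alpha\circ q$ --- is exactly the standard proof the paper is invoking. Your side remark about possible coincidences among the $\bar s_i$ is handled correctly as well, since the form of Gasch\"utz's lemma in the cited source (Fried--Jarden) is the tuple version, which allows repetitions.
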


The identification of second group cohomology classes with equivalence classes of group extensions shows the
following result. 

\begin{lemma}
\label{lem:second}
Assume $B$ is an abelian $H$-module and that there is a short exact sequence
\begin{equation}
\label{eq:extclass0}
\xymatrix{
1 \ar[r]& B \ar[r]& \widetilde{H} \ar[r] &H \ar[r] & 1.
}
\end{equation}
Let $\widetilde{\beta} \in H^2(H,B)$ be the extension class of this sequence.  Let $\gamma:B \to B$ be an $H$-module automorphism.  Then  $\gamma$  fixes $\widetilde{\beta}$ if and only if there is a diagram of automorphisms
\begin{equation}
\label{eq:extclass}
\xymatrix{
1 \ar[r]& B \ar[r]\ar[d]_{\gamma}& \widetilde{H} \ar[r] \ar[d]^{\widetilde{\gamma}}&H \ar[r]\ar@{=}[d] & 1\\
1 \ar[r]& B \ar[r]& \widetilde{H} \ar[r] &H \ar[r] & 1
}
\end{equation}
in which the right vertical isomorphism is the identity map on $H$.
\end{lemma}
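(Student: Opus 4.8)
The plan is to use the standard dictionary between $H^2(H,B)$ and equivalence classes of extensions, being careful both about the profinite topology and about the fact that the left-hand vertical map in \eqref{eq:extclass} is $\gamma$ rather than the identity. Write the maps in \eqref{eq:extclass0} as $1 \to B \xrightarrow{i} \widetilde{H} \xrightarrow{p} H \to 1$. Since the quotient map $p$ of profinite groups admits a continuous section, I would fix a continuous normalized section $s : H \to \widetilde{H}$ (so $p\circ s = \mathrm{id}_H$ and $s(1)=1$) and let $f(g,h) = i^{-1}\!\big(s(g)\,s(h)\,s(gh)^{-1}\big)$ be the associated continuous normalized $2$-cocycle representing $\widetilde{\beta}$, with the $H$-action on $B$ recovered as ${}^{g}b = i^{-1}\!\big(s(g)\, i(b)\, s(g)^{-1}\big)$. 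Because $\gamma$ is an $H$-module automorphism, the induced map $\gamma_*$ on $H^2(H,B)$ is represented on cochains by $f \mapsto \gamma\circ f$, so $\gamma_*(\widetilde\beta) = [\gamma\circ f]$; the whole statement then becomes the assertion that $[\gamma\circ f] = [f]$ if and only if the diagram \eqref{eq:extclass} can be filled in.

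For the forward direction, suppose $\widetilde{\gamma}$ exists. From $p\circ\widetilde\gamma = p$ the composite $s' := \widetilde\gamma\circ s$ is again a continuous section of $p$, and from $\widetilde\gamma\circ i = i\circ\gamma$ its cocycle is $f'(g,h) = i^{-1}(\widetilde\gamma(s(g)s(h)s(gh)^{-1})) = \gamma(f(g,h))$. Since any two sections produce cohomologous cocycles, $[\gamma\circ f] = [f']=[f] = \widetilde\beta$, i.e.\ $\gamma_*(\widetilde\beta)=\widetilde\beta$.

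For the converse, suppose $\gamma_*(\widetilde\beta)=\widetilde\beta$, so $\gamma\circ f$ and $f$ differ by the coboundary of a continuous normalized $1$-cochain $c : H \to B$; concretely $\gamma(f(g,h)) = ({}^{g}c(h))\, c(gh)^{-1}\, c(g)\, f(g,h)$. Using that every element of $\widetilde H$ is uniquely $i(b)s(h)$, I would define
\[
\widetilde\gamma\big(i(b)\,s(h)\big) \;=\; i\big(\gamma(b)\,c(h)\big)\,s(h).
\]
The point is that verifying $\widetilde\gamma$ is a homomorphism reduces, after moving $i(\cdot)$ past $s(g)$ via the $H$-action and commuting factors in the abelian group $B$, exactly to the coboundary identity above; commutativity of \eqref{eq:extclass} with the identity on $H$ is immediate from $c(1)=1$. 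Thus the existence of $\widetilde\gamma$ is equivalent to $c$ satisfying precisely the relation guaranteed by $\gamma_*(\widetilde\beta)=\widetilde\beta$.

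The remaining, and most delicate, point is the profinite bookkeeping rather than the algebra. I expect the main obstacle to be confirming that the $\widetilde\gamma$ just built is a continuous automorphism: injectivity and surjectivity follow formally from $\gamma$ being bijective and $c$ being $B$-valued (solve $\gamma(b)\,c(h)=b'$ for fixed $h$), while continuity of $\widetilde\gamma$ holds because it is assembled from the continuous data $\gamma$, $c$, $s$ and multiplication in $\widetilde H$. One then invokes that a continuous bijective homomorphism of profinite (hence compact Hausdorff) groups is automatically a topological isomorphism, so the inverse is continuous for free. The one input that must be secured up front is the existence of a continuous normalized section $s$ and of a continuous normalized trivializing cochain $c$; these are standard for profinite groups (and in the application $H$ is finite, so there is nothing to check), but they are exactly what makes the argument valid in the present category.
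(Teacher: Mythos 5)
Your proposal is correct and takes essentially the same approach as the paper: the paper offers no proof beyond invoking the standard identification of $H^2(H,B)$ with equivalence classes of extensions, and your cocycle-level argument (including the profinite bookkeeping about continuous sections, continuous trivializing cochains, and the automatic topological invertibility of a continuous bijective homomorphism of compact groups) is precisely the standard argument underlying that identification. Nothing is missing or incorrect.
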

 
\begin{proof}[Proof of Theorem \ref{thm:main}]  
An element of $A(\pi,\widetilde{I})  = \{\alpha \in \mathrm{Aut}(\widehat{F}_d): \pi \circ \alpha = \pi \mbox{ and } \alpha(\widetilde{I}) = \widetilde{I} \}$ induces an automorphism of the sequence (\ref{eq:cut}) which is the identity on $H$.  Therefore, Lemma \ref{lem:second} implies that the restriction of this element to $\widetilde{R}$ lies in $\mathrm{Aut}_{H,\beta}(\widetilde{R})$.  We now suppose that $\gamma$ is an element of $\mathrm{Aut}_{H,\beta}(\widetilde{R})$.  By Lemma \ref{lem:second}, there is an automorphism $\widetilde{\gamma}$ of $\widehat{F}_d/\widetilde{I}$ that fits in a diagram of automorphisms
\begin{equation}
\label{eq:extclassnow}
\xymatrix{
1 \ar[r]& \widetilde{R} \ar[r]\ar[d]_{\gamma}&\widehat{F}_d/\widetilde{I} \ar[r] \ar[d]^{\widetilde{\gamma}}&H \ar[r]\ar@{=}[d] & 1\;\;\\
1 \ar[r]& \widetilde{R} \ar[r]&\widehat{F}_d/\widetilde{I} \ar[r] &H \ar[r] & 1.
}
\end{equation}
By  Corollary \ref{cor:firstsecond}, we can lift $\widetilde{\gamma}$ to an automorphism $\alpha$ of $\widehat{F}_d$. This lift will lie in $A(\pi,\widetilde{I})$, which shows that the homomorphism in Theorem \ref{thm:main} is surjective.
\end{proof}

\section{Bely\u{\i}'s construction of an injection $\mathrm{Gal}(\overline{\mathbb{Q}}/\mathbb{Q}) \to \mathrm{Aut}(\widehat{F}_2)$.}
\label{s:GT}

In this section, we review a construction of Bely\u{\i} in  \cite{Belyi1, Belyi, HarbaterSchneps} of an injection $\iota: \mathrm{Gal}(\overline{\mathbb{Q}}/\mathbb{Q}) \to \mathrm{Aut}(\widehat{F}_2)$ which is essential to our work.  An explicit description of this construction is needed, in particular, to show that on passing to a finite index subgroup, it is compatible with the actions of $\mathrm{Gal}(\overline{\mathbb{Q}}/\mathbb{Q})$ and $\mathrm{Aut}(\widehat{F}_2)$ on the generalized Jacobians of curves.

We will identify $\mathbb{Q}(\mathbb{P}^1_{\mathbb{Q}})$ with $\mathbb{Q}(t)$ for a fixed choice of affine parameter $t$.  Then $\overline{\mathbb{Q}}(\mathbb{P}^1_{\overline{\mathbb{Q}}})$ is identified with  $\overline{\mathbb{Q}}(t)$. Let $\overline{\mathbb{Q}}(t)^{\mathrm{cl}}$ be a fixed algebraic closure of $\overline{\mathbb{Q}}(\mathbb{P}^1_{\overline{\mathbb{Q}}}) = \overline{\mathbb{Q}}(t)$.  We have an exact sequence of \'etale fundamental groups 
\begin{equation}
\label{eq:fundamental}
1 \to \pi_1(\mathbb{P}^1_{\overline{\mathbb{Q}}} - \{0,1,\infty\},\eta) \to \pi_1(\mathbb{P}^1_{\mathbb{Q}} - \{0,1,\infty\},\eta) \to \mathrm{Gal}(\overline{\mathbb{Q}}/\mathbb{Q}) \to 1
\end{equation}
when $\eta:\mathrm{Spec}(\overline{\mathbb{Q}}(t)^{\mathrm{cl}}) \to \mathbb{P}^1_{\overline{\mathbb{Q}}} - \{0,1,\infty\}$ is the geometric point determined by $\overline{\mathbb{Q}}(t)^{\mathrm{cl}}$.  This gives a homomorphism $\mathrm{Gal}(\overline{\mathbb{Q}}/\mathbb{Q}) \to \mathrm{Out}(\pi_1(\mathbb{P}^1_{\overline{\mathbb{Q}}} - \{0,1,\infty\},\eta))$.  Bely\u{\i} showed in \cite{Belyi1,Belyi} how one can lift this homomorphism to an injection 
\begin{equation}
\label{eq:inject}
\iota: \mathrm{Gal}(\overline{\mathbb{Q}}/\mathbb{Q}) \to \mathrm{Aut}(\pi_1(\mathbb{P}^1_{\overline{\mathbb{Q}}} - \{0,1,\infty\},\eta))
\end{equation}
in the following way.

Let $L$ be the maximal extension of $\overline{\mathbb{Q}}(t)$ contained in $\overline{\mathbb{Q}}(t)^{\mathrm{cl}}$ that is unramified outside $\{0,1,\infty\}$.  Define 
 $$\Gamma_{\overline{\mathbb{Q}}} = \mathrm{Gal}(L/\overline{\mathbb{Q}}(t)) = \pi_1(\mathbb{P}^1_{\overline{\mathbb{Q}}} - \{0,1,\infty\},\eta).$$ Let $x$ and $y$ be topological generators of decomposition groups in $\Gamma_{\overline{\mathbb{Q}}}$ over $0$ and $1$.  These decomposition groups correspond to projective systems $\{P_{i,0}\}_i$ and $\{P_{i,1}\}_i$ of discrete valuations over $0$ and $1$, respectively, in a countable cofinal system indexed by $i$ of finite extensions of $\overline{\mathbb{Q}}(t)$ in $L$.  The topological fundamental group of $\mathbb{P}^1(\mathbb{C}) - \{0,1,\infty\}$ is the free group generated by loops around $0$ and $1$.  It follows that the group $\Gamma_{\overline{\mathbb{Q}}} $ is the profinite completion $\widehat {\langle x,y\rangle }$ of the free group generated by $x$ and $y$.

For  $\sigma \in \mathrm{Gal}(\overline{\mathbb{Q}}/\mathbb{Q})$, we lift $\sigma$ canonically to an element of $\mathrm{Gal}(\overline{\mathbb{Q}}(t)/\mathbb{Q}(t))$ by letting $\sigma$ fix $t$.  Choose a further  lift $\widetilde{\sigma}$ of $\sigma$ to $\mathrm{Aut}(L/\mathbb{Q}(t)) \subset \mathrm{Aut}(L/\mathbb{Q})$.  There is a transitive  action of $\Gamma_{\overline{\mathbb{Q}}}$ on the discrete valuations of any finite Galois extension of $\overline{\mathbb{Q}}(t)$ in $L$ that lie over the discrete valuation of $\overline{\mathbb{Q}}(t)$ associated to the point $0$.  It follows that there is an element $\gamma \in \mathrm{Gal}(L/\overline{\mathbb{Q}}(t))$ such that $\gamma(\{P_{i,0}\}_i) = \widetilde{\sigma}(\{P_{i,0}\}_i)$.  Then conjugation by $\gamma^{-1} \widetilde{\sigma} $ takes 
the profinite group $\widehat{\langle x \rangle}$ generated by $x$ back to itself.  

Define 
$$\Gamma_{\mathbb{Q}} = \mathrm{Gal}(L/\mathbb{Q}(t))$$
and let $[\Gamma_{\overline{\mathbb{Q}}}, \Gamma_{\overline{\mathbb{Q}}} ]$ be the commutator subgroup of $\Gamma_{\overline{\mathbb{Q}}} \subset \Gamma_{\mathbb{Q}} $.  When we set 
$$\widetilde{\Gamma}_{\mathbb{Q}} = \frac{\Gamma_{\mathbb{Q}}}{[\Gamma_{\overline{\mathbb{Q}}}, \Gamma_{\overline{\mathbb{Q}}} ]}$$
we have an exact sequence
\begin{equation}
\label{eq:exactnice}
1 \to \Gamma_{\overline{\mathbb{Q}}}^{\mathrm{ab}} \to \widetilde{\Gamma}_{\mathbb{Q}} \to \mathrm{Gal}(\overline{\mathbb{Q}}/\mathbb{Q}) \to 1
\end{equation}
where
$$\Gamma_{\overline{\mathbb{Q}}}^{\mathrm{ab}} = \Gamma_{\overline{\mathbb{Q}}}/[\Gamma_{\overline{\mathbb{Q}}}, \Gamma_{\overline{\mathbb{Q}}} ] = \widehat{\langle x,y \rangle}^{\mathrm{ab}} \cong \widehat{\langle x \rangle } \times \widehat{ \langle y \rangle}$$
and we have identified $\mathrm{Gal}(\overline{\mathbb{Q}}(t))/\mathbb{Q}(t))$ with $\mathrm{Gal}(\overline{\mathbb{Q}}/\mathbb{Q})$.

Let $D(y)$ be the subgroup of $\Gamma_{\mathbb{Q}}$ which fixes the projective system $\{P_{i,1}\}_i$ of points lying over $1$. Since the residue field of the point $1$ on $\mathbb{P}^1_{\mathbb{Q}}$ is $\mathbb{Q}$, we have an exact sequence
$$1 \to \widehat {\langle y \rangle} \to D(y)  \to \mathrm{Gal}(\overline{\mathbb{Q}}/\mathbb{Q}) \to 1.$$
We have an injective homomorphism  $\widehat {\langle y \rangle} \to \Gamma_{\overline{\mathbb{Q}}}^{\mathrm{ab}}$ induced by the inclusion of $\widehat {\langle y \rangle}$ into $\Gamma_{\overline{\mathbb{Q}}}$.  It follows that the homomorphism $D(y) \to \widetilde{\Gamma}_{\mathbb{Q}}$ induced by $D(y) \subset \Gamma_{\mathbb{Q}}$ is injective;  let $\overline{D}(y)$ be the image of this homomorphism.  For simplicity we will identify $\widehat{\langle x \rangle}$  and $\widehat{\langle y \rangle}$ with their images in $\Gamma_{\overline{\mathbb{Q}}}^{\mathrm{ab}}$.
 
We now claim that $\widehat{\langle x \rangle}$ is a set of representatives for the left cosets $z \overline{D}(y)$ of $\overline{D}(y)$ in $\widetilde{\Gamma}_{\mathbb{Q}}$.  First, these cosets are disjoint, since if $z \overline{D}(y) = z' \overline{D}(y)$ for some $z,z' \in \widehat{\langle x \rangle}$ then $z^{-1} z'$ is in the intersection of $\overline{D}(y)$ with the kernel of $\widetilde{\Gamma}_{\mathbb{Q}} \to \mathrm{Gal}(\overline{\mathbb{Q}}/\mathbb{Q})$.   This intersection is
$$\overline{D}(y) \cap \Gamma_{\overline{\mathbb{Q}}}^{\mathrm{ab}} = \widehat{\langle y \rangle} \subset \Gamma_{\overline{\mathbb{Q}}}^{\mathrm{ab}} = \widehat{\langle x \rangle } \times \widehat{ \langle y \rangle}.$$
The only element of $\widehat{\langle x \rangle}$ lying in this intersection is the identity element, so $z = z'$.  We must now show that every element $\psi$ of $\widetilde{\Gamma}_{\mathbb{Q}}$ lies in some coset $z \overline{D}(y)$ with $z \in \widehat{\langle x \rangle}$. To show this, first note that we can find $d \in \overline{D}(y)$ with the same image under $\widetilde{\Gamma}_{\mathbb{Q}} \to \mathrm{Gal}(\overline{\mathbb{Q}}/\mathbb{Q})$ as $\psi$.  Then $\psi d^{-1} \in \Gamma_{\overline{\mathbb{Q}}}^{\mathrm{ab}} = \widehat{\langle x \rangle } \times \widehat{ \langle y \rangle}$, so $\psi d^{-1} \in z \widehat{ \langle y \rangle}$ for some $z \in \widehat{\langle x \rangle }$. Hence, $\psi \in z \widehat{ \langle y \rangle} d \subset z \overline{D}(y)$ and we are done.

We now return to the element $\gamma^{-1} \widetilde{\sigma} $ of $\Gamma_{\mathbb{Q}}$. The image of this element in $\widetilde{\Gamma}_{\mathbb{Q}}$ lies in a coset $\nu \overline{D}(y)$ for a unique $\nu \in \widehat{\langle x \rangle}$.  Therefore, $\nu^{-1} \gamma^{-1} \widetilde{\sigma} \in \Gamma_{\mathbb{Q}}$ has the same image in $\widetilde{\Gamma}_{\mathbb{Q}}$ as an element $\mu$ of $D(y)$, and $\mu$ is unique since $\nu$ was unique and the map $D(y) \to \overline{D}(y)$ is an isomorphism.  Hence, $\nu^{-1} \gamma^{-1} \widetilde{\sigma} = \lambda \cdot \mu$ for a unique $\lambda$ in the commutator subgroup $[ \Gamma_{\overline{\mathbb{Q}}}, \Gamma_{\overline{\mathbb{Q}}}]$ of  $\Gamma_{\overline{\mathbb{Q}}}$.  Since $\nu$ was uniquely determined by $\gamma^{-1} \widetilde{\sigma}$, both $\lambda$ and $\mu$ are as well.   The intersection of $[ \Gamma_{\overline{\mathbb{Q}}}, \Gamma_{\overline{\mathbb{Q}}}]$ with $D(y)$ is trivial.  It follows that $\sigma' = \nu^{-1} \gamma^{-1} \widetilde{\sigma} = \lambda \cdot \mu$ is a lift of $\sigma$ to $\mathrm{Gal}(L/\mathbb{Q}(t)) = \Gamma_{\mathbb{Q}}$ that has these properties:
\begin{enumerate}
\item[i.] Conjugation by $\sigma'$ takes $\widehat{\langle x \rangle}$ to itself, so $\sigma' \cdot x \cdot (\sigma')^{-1} = x^{a}$ for some $a \in \widehat{\mathbb{Z}}^*$.
\item[ii.]  Conjugation by $\sigma'$ takes $\widehat{\langle y \rangle}$ to a conjugate $\lambda \widehat{\langle y \rangle} \lambda^{-1} = (\lambda \mu) \widehat{\langle y \rangle} (\lambda \mu)^{-1} $ of $\widehat{\langle y \rangle}$ by an element $\lambda$ of the commutator subgroup $[ \Gamma_{\overline{\mathbb{Q}}}, \Gamma_{\overline{\mathbb{Q}}}]$. Thus $\sigma' \cdot y \cdot (\sigma')^{-1} = (\lambda\cdot y \cdot \lambda^{-1})^b$ for some $b \in \widehat{\mathbb{Z}}^*$.
\end{enumerate}
By considering the action of $\sigma'$ by conjugation on $\Gamma_{\overline{\mathbb{Q}}}^{\mathrm{ab}}$, one sees by Kummer theory that $a = b  = \chi_{\mathrm{cyc}}(\sigma)$ where $\chi_{\mathrm{cyc}}$ is the standard cyclotomic character.  Bely\u{\i} shows that  $\widehat{\langle x \rangle}$ and $\widehat{\langle y \rangle}$ are their own centralizers in $\Gamma_{\overline{\mathbb{Q}}}$, from which it follows that $\sigma'$ is uniquely determined by $\sigma$.    Bely\u{\i} denotes $\lambda$ by $f_\sigma$, and the Bely\u{\i} lift
$$\iota:\mathrm{Gal}(\overline{\mathbb{Q}}/\mathbb{Q}) \to \mathrm{Aut}(\Gamma_{\overline{\mathbb{Q}}}) = \mathrm{Aut}(\widehat{\langle x,y \rangle})$$
is defined by letting $\iota(\sigma)$ be conjugation by $\sigma'$.

For later use, we will describe one consequence of this construction.  Suppose $F$ is a number field and that $Y \to  \mathbb{P}^1_{\overline{\mathbb{Q}}}$ is an irreducible finite Galois $H$-cover of smooth projective irreducible curves that is unramified outside $\{0, 1, \infty\}$. Suppose $Y$ and the action of $H$ on $Y$ are defined over $F$, and that $F$ is algebraically closed in the function field $F(Y)$. We will furthermore require that the points over $0$, $1$ and $\infty$ on $Y$ are defined over $F$. Recall that $\eta$ corresponds to the choice of an algebraic closure $\overline{\mathbb{Q}}(t)^{\mathrm{cl}}$ of $\overline{\mathbb{Q}}(t)$.  We fix an embedding of $F(Y)$ into $\overline{\mathbb{Q}}(t)^{\mathrm{cl}}$.   

\begin{lemma}
\label{lem:extendit}  We can find a finite extension $F^\dagger$ of $F$  so that if $\sigma \in \mathrm{Gal}(\overline{\mathbb{Q}}/F^\dagger)$ then the element $\sigma'$ in the Bely\u{\i} construction of $\iota(\sigma)$ lies in $\mathrm{Gal}(L/F^\dagger(Y))$.
\end{lemma}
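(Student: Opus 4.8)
The plan is to reduce the statement to the vanishing of a single continuous homomorphism $G_F\to H$ and then to take $F^\dagger$ to be the fixed field of its kernel. First I would record the relevant group theory over $F$. Because $Y$, the $H$-action on it, and the points over $\{0,1,\infty\}$ are defined over $F$, and $F$ is algebraically closed in $F(Y)$, the extension $F(Y)/F(t)$ is Galois with group $H$; moreover $F(Y)\subset L$ since $Y\to\mathbb{P}^1_{\overline{\mathbb{Q}}}$ is unramified outside $\{0,1,\infty\}$. Writing $\Gamma_F=\mathrm{Gal}(L/F(t))$, the subgroup $\mathrm{Gal}(L/F(Y))$ is normal in $\Gamma_F$ with quotient $H$, so there is a continuous surjection $\pi_F\colon\Gamma_F\to H$ with kernel $\mathrm{Gal}(L/F(Y))$, restricting on $\Gamma_{\overline{\mathbb{Q}}}$ to the surjection cut out by the cover $Y\to\mathbb{P}^1_{\overline{\mathbb{Q}}}$. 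The content of this step is the tautology that for $\psi\in\Gamma_F$ one has $\psi\in\mathrm{Gal}(L/F(Y))$ precisely when $\pi_F(\psi)=1$.

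Next I would show that the Bely\u{\i} lift $s\colon\sigma\mapsto\sigma'$ is a continuous homomorphism $\mathrm{Gal}(\overline{\mathbb{Q}}/\mathbb{Q})\to\Gamma_{\mathbb{Q}}$ splitting the projection $\Gamma_{\mathbb{Q}}\to\mathrm{Gal}(\overline{\mathbb{Q}}/\mathbb{Q})$, and that $s(G_F)\subset\Gamma_F$. The inclusion is immediate: $\sigma'$ fixes $t$ and restricts to $\sigma$ on $\overline{\mathbb{Q}}$, so if $\sigma\in G_F$ then $\sigma'$ fixes $F(t)$. For the homomorphism property I would use the two conditions that pin down $\sigma'$ among all lifts of $\sigma$: (A) conjugation by $\sigma'$ sends $x$ to $x^{\chi_{\mathrm{cyc}}(\sigma)}$, and (B) the image of $\sigma'$ in $\widetilde{\Gamma}_{\mathbb{Q}}$ lies in $\overline{D}(y)$. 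For $\sigma,\tau$, the element $\sigma'\tau'$ is a lift of $\sigma\tau$ that satisfies (A), since $\chi_{\mathrm{cyc}}$ is multiplicative, and satisfies (B), since $\Gamma_{\mathbb{Q}}\to\widetilde{\Gamma}_{\mathbb{Q}}$ is a homomorphism and $\overline{D}(y)$ is a subgroup. As (A) and (B) characterize the Bely\u{\i} lift uniquely (this is the self-centralizing property of $\widehat{\langle x\rangle}$ and $\widehat{\langle y\rangle}$ recalled above), I conclude $\sigma'\tau'=(\sigma\tau)'$; continuity of $s$ is the standard continuity of this section.

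Finally I would assemble the pieces. The composite $h=\pi_F\circ s|_{G_F}\colon G_F\to H$ is a continuous homomorphism into a finite group, so $\ker h$ is open; let $F^\dagger$ be its fixed field, a finite extension of $F$. For $\sigma\in G_{F^\dagger}=\ker h$ one has $\pi_F(\sigma')=1$, hence $\sigma'\in\mathrm{Gal}(L/F(Y))$ fixes $F(Y)$; and $\sigma'$ fixes $F^\dagger$ because $\sigma$ does and $\sigma'|_{\overline{\mathbb{Q}}}=\sigma$. Therefore $\sigma'$ fixes the compositum $F^\dagger(Y)=F^\dagger\cdot F(Y)$, that is, $\sigma'\in\mathrm{Gal}(L/F^\dagger(Y))$, which is the assertion.

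I expect the one genuinely delicate point to be the homomorphism property of $s$: this is exactly what upgrades the pointwise, $\sigma$-by-$\sigma$ recipe for $\sigma'$ into a subgroup condition that can be cut out by a single finite extension, and it hinges on confirming that conditions (A) and (B) jointly determine $\sigma'$. Everything else is bookkeeping along the tower $\mathbb{Q}(t)\subset F(t)\subset F(Y)\subset L$.
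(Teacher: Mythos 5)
Your proof is correct, but it takes a genuinely different route from the paper's. The paper argues constructively inside Bely\u{\i}'s recipe: using the $F$-rationality of the fibres of $Y$ over $\{0,1,\infty\}$, it chooses the lifts $\widetilde{\sigma}$ and $\gamma$ to lie in $\mathrm{Gal}(L/F(Y))$ and $\mathrm{Gal}(L/\overline{\mathbb{Q}}(Y))$ respectively, so that the only problematic factor of $\sigma'=\nu^{-1}\gamma^{-1}\widetilde{\sigma}$ is $\nu\in\widehat{\langle x\rangle}$; it then passes to the compositum $Y^\dagger$ of $Y$ with an auxiliary $(\mathbb{Z}/s\times\mathbb{Z}/s)$-cover $U$, where $s$ is the exponent of $H$, and an inertia argument (the image of $\nu$ in $\mathrm{Gal}(\overline{\mathbb{Q}}(U)/\overline{\mathbb{Q}}(t))$ lies in two inertia subgroups that meet trivially) forces $\nu$ to be an $s$-th power, hence to die in $H$. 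This yields an explicit $F^\dagger$, namely a field of definition of $Y^\dagger$ with its marked points. You instead prove the lemma softly: the Bely\u{\i} lift $s:\sigma\mapsto\sigma'$ is a continuous splitting homomorphism, so $\pi_F\circ s|_{G_F}$ is a continuous homomorphism to the finite group $H$, and $F^\dagger$ is the fixed field of its kernel. Your route is shorter, isolates the structural fact underlying the lemma, and does not even use rationality of the fibres over $\{0,1,\infty\}$; the paper's route is effective (one can actually write down $F^\dagger$) and never has to establish that $\sigma\mapsto\sigma'$ is a homomorphism or continuous, facts the paper nowhere states.

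Because of that last point, two of your claims carry all the weight and must be argued rather than cited. (1) Uniqueness: your (A) and (B) do pin down $\sigma'$, by the self-centralizing property as you say, but spell it out: if $\sigma_1',\sigma_2'$ are lifts of $\sigma$ satisfying both, then $g=\sigma_1'(\sigma_2')^{-1}$ commutes with $x^{\chi_{\mathrm{cyc}}(\sigma)}$, hence centralizes $\widehat{\langle x\rangle}$, hence lies in $\widehat{\langle x\rangle}$; by (B) its image in $\Gamma_{\overline{\mathbb{Q}}}^{\mathrm{ab}}=\widehat{\langle x\rangle}\times\widehat{\langle y\rangle}$ lies in $\widehat{\langle y\rangle}$, and since $\widehat{\langle x\rangle}$ injects into $\Gamma_{\overline{\mathbb{Q}}}^{\mathrm{ab}}$ we get $g=1$. (2) Continuity of $s$ is not bookkeeping but essential: $G_F$ has finite-index subgroups that are \emph{not} open (it surjects continuously onto an infinite product of finite groups, e.g. $\prod_p \mathbb{Z}/2$ via the cyclotomic character, and such products have non-open index-two subgroups), so the kernel of a merely abstract homomorphism $G_F\to H$ need not be of the form $G_{F^\dagger}$. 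Fortunately continuity follows from what you already have: by the uniqueness in (1), $s(G_{\mathbb{Q}})$ is exactly the set of elements of $\Gamma_{\mathbb{Q}}$ that normalize $\widehat{\langle x\rangle}$ and whose image in $\widetilde{\Gamma}_{\mathbb{Q}}$ lies in $\overline{D}(y)$; this set is closed, so the projection $\Gamma_{\mathbb{Q}}\to G_{\mathbb{Q}}$ restricted to $s(G_{\mathbb{Q}})$ is a continuous bijection of compact Hausdorff groups, hence a homeomorphism, and $s$ is its inverse. With these two points written out, your proof is complete.
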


\begin{proof}
If $\sigma \in \mathrm{Gal}(\overline{\mathbb{Q}}/\mathbb{Q})$ lies in the finite index subgroup $\mathrm{Gal}(\overline{\mathbb{Q}}/F)$, we can extend $\sigma$ in a unique way to an element of $\mathrm{Gal}(\overline{\mathbb{Q}}(Y)/F(Y))$.  We can thus choose the first lift $\widetilde{\sigma} \in \mathrm{Aut}(L/\mathbb{Q})$ in Bely\u{\i}'s construction so that $\widetilde{\sigma} \in \mathrm{Gal}(L/F(Y))$.   Recall that $\{P_{i,0}\}_i$ was a cofinal system of points over $0$ in a cofinal system of finite covers of  $\mathbb{P}^1_{\overline{\mathbb{Q}}}$ that are unramified outside $0, 1$ and $\infty$.  We can assume that $Y$ is one of the covers in this system, and that all the covers in the system are in fact covers of $Y$. Let $0_Y$ be the point of $Y$ lying over $0$ which appears in the system $\{P_{i,0}\}_i$.  Since $\widetilde{\sigma}$ fixes $Y$ and $0_Y$, we see that $\{\widetilde{\sigma}(P_{i,0})\}_i$ is a system of points over $0$ which all lie over $0_Y$. It follows that the first element $\gamma \in \mathrm{Gal}(L/\overline{\mathbb{Q}}(t))$ in Bely\u{\i}'s construction can be chosen to lie in $\mathrm{Gal}(L/\overline{\mathbb{Q}}(Y))$.  However, we cannot say that the next element $\nu$ in Bely\u{\i}'s construction lies in $\mathrm{Gal}(L/\overline{\mathbb{Q}}(Y))$.  We now indicate some further hypotheses that will force this to be the case. 

Let $s$ be the exponent of the finite group $H$, i.e. the smallest positive integer such that every element of $H$ has order dividing $s$.  There is an abelian cover $U$ of $\mathbb{P}^1_{\overline{\mathbb{Q}}}$ that is unramified outside $\{0,1,\infty\}$ and whose Galois group over $\mathbb{P}^1_{\overline{\mathbb{Q}}}$ is isomorphic to the group $\mathbb{Z}/s \times \mathbb{Z}/s$.  Any automorphism in $\mathrm{Gal}(L/\overline{\mathbb{Q}}(U))$ then has image in $\mathrm{Gal}(L/\overline{\mathbb{Q}}(t))^{\mathrm{ab}} = \widehat{\langle x,y\rangle}^{\mathrm{ab}} \cong \widehat{\langle x \rangle} \times \widehat{\langle y\rangle} $ which is an $s^{\mathrm{th}}$ power. Let $F^\dagger$ be a finite extension of $F$ with the following properties.  The composite of $\overline{\mathbb{Q}}(U)$ and $\overline{\mathbb{Q}}(Y)$ in $L$ is the function field of a smooth projective irreducible curve $Y^\dagger$ that  is defined over $F^\dagger$. The action of $H^\dagger = \mathrm{Gal}(\overline{\mathbb{Q}}(Y^\dagger)/\overline{\mathbb{Q}}(t))$ is defined over $F^\dagger$.  The points of $Y^\dagger$ over $\{0,1,\infty\}$ are defined over $F^\dagger$, and $F^\dagger$ is the constant field of $F^\dagger(Y^\dagger)$.  Then $Y^\dagger$ is a cover of $Y$ and $H$ is a quotient of $H^\dagger$.  Running Bely\u{\i}'s construction now with $Y$ replaced by $Y^\dagger$ and $F$ replaced by $F^\dagger$, we arrive at a lift $\widetilde{\sigma}  \in \mathrm{Gal}(L/F^\dagger(Y^\dagger))$ and an element $\gamma \in \mathrm{Gal}(L/\overline{\mathbb{Q}}(Y^\dagger))$.   

We now recall that $\nu$ is the unique element of $\widehat{\langle x \rangle}$ such that the image of $\gamma^{-1} \widetilde{\sigma}$ in $\widetilde{\Gamma}_{\mathbb{Q}}  = \Gamma_{\mathbb{Q}}/[\Gamma_{\overline{\mathbb{Q}}}, \Gamma_{\overline{\mathbb{Q}}} ]$ lies in the image in $\widetilde{\Gamma}_{\mathbb{Q}}$ of the coset $\nu D(y)$ of the decomposition group $D(y)$ of the inverse system of points $\{P_{i,1}\}_i$ over $1$.  We can assume that the inverse system of covers used to define $\{P_{i,1}\}_i$ includes $U$ and $Y$ and $Y^{\dagger}$. Since we have arranged that $\gamma^{-1} \widetilde{\sigma}$ fixes $F^\dagger(Y^{\dagger})$ and that the points over $\{0,1,\infty\}$ on $Y^\dagger$ are defined over $F^\dagger$, the action of $\gamma^{-1} \widetilde{\sigma}$ fixes the point $P_{U,1}$ of the cover $U$ over $1$ in the above inverse system.  Now, $U$ is an abelian cover of $\mathbb{P}^1_{\overline{\mathbb{Q}}}$ and  $\gamma^{-1} \widetilde{\sigma}$ has the same image in $\widetilde{\Gamma}_{\mathbb{Q}}$ as $\nu z$ for some $z$ in $D(y)$.  Since $D(y)$ fixes the inverse system $\{P_{i,1}\}_i$, we conclude that $\nu \in \widehat{\langle x \rangle}$ must fix $P_{U,1}$ because $\gamma^{-1} \widetilde{\sigma}$  does.  Therefore, the image of $\nu \in \widehat{\langle x \rangle} \subset \Gamma_{\overline{\mathbb{Q}}} $ in $\mathrm{Gal}(\overline{\mathbb{Q}}(U)/\overline{\mathbb{Q}}(t)) = \mathbb{Z}/s \times \mathbb{Z}/s$ is in the inertia group of a point $P_{U,0}$ over $0$ in $U$ as well as  in the inertia group of the point $P_{U,1}$.  These inertia subgroups of $\mathrm{Gal}(\overline{\mathbb{Q}}(U)/\overline{\mathbb{Q}}(t))$ have trivial intersection, so $\nu \in \widehat{\langle x \rangle}$ has trivial image in $\mathrm{Gal}(\overline{\mathbb{Q}}(U)/\overline{\mathbb{Q}}(t))$.  This forces $\nu$ to be the $s^{\mathrm{th}}$ power of an element of $\widehat{\langle x \rangle}$.  Since $s$ was the exponent of $H$, we conclude that $\nu$ has trivial image in $H = \mathrm{Gal}(\overline{\mathbb{Q}}(Y)/\overline{\mathbb{Q}}(t))$. Therefore, $\sigma' = \nu^{-1} \gamma^{-1} \widetilde{\sigma}$ lies in $\mathrm{Gal}(L/F^\dagger(Y))$ and we are done.
\end{proof}

\section{Galois representations}
\label{s:GalReps}

Let $X$ be a smooth projective irreducible curve defined over $\overline{\mathbb{Q}}$.  By Bely\u{\i}'s Theorem \cite[Theorem 4]{Belyi1}, there is a non-constant morphism $\lambda:X \to \mathbb{P}^1_{\overline{\mathbb{Q}}}$ which is unramified outside $\{0,1,\infty\}$. Let $H$ be the Galois group of the Galois closure $\widetilde{\lambda}:Y  \to \mathbb{P}^1_{\overline{\mathbb{Q}}}$ of $\lambda$. We will identify $\widehat{F}_2$ with the \'etale fundamental group $\pi_1(\mathbb{P}^1_{\overline{\mathbb{Q}}} - \{0,1,\infty\},\eta)$ appearing in (\ref{eq:inject}) by the choices described in the previous section. We will also view $G_{\mathbb{Q}}=\mathrm{Gal}(\overline{\mathbb{Q}}/\mathbb{Q})$ as a subgroup of $\mathrm{Aut}(\widehat{F}_2)$ via the injection $\iota$ of (\ref{eq:inject}).  Let $\pi:\widehat{F}_2 \to H$ be the homomorphism associated to $\widetilde{\lambda}$ and the choice of a geometric point of $Y$ over $\eta$.  Fix $d = 2$ and let 
$$\rho: A(\pi) \to \mathrm{Aut}_{H,\beta}(\overline{R})$$
be the surjective homomorphism defined in Theorem \ref{thm:main2}.

We will prove the following result, which implies Theorem \ref{thm:main1}.

\begin{theorem}
\label{thm:restriction}
The group $\overline{R}$ is isomorphic to the Galois group of the maximal abelian cover of $Y$ that is unramified outside the ramification locus $\widetilde{\lambda}^{-1}(\{0,1,\infty\})$.   This Galois group is isomorphic to the adelic Tate module $T_{\widetilde{\lambda}}(Y)$ of the generalized Jacobian $J_{\widetilde{\lambda}}(Y)$ of $Y$ with respect to $\widetilde{\lambda}^{-1}(\{0,1,\infty\})$.  Let $\mathcal{J}\subset H$ be the Galois group of $Y$ over $X$.  The coinvariants $\overline{R}_\mathcal{J}$ of $\overline{R}$ with respect to $\mathcal{J}$ have finite torsion, and the quotient $\overline{R}_\mathcal{J}^{\mathrm{cotor}}$ of $\overline{R}_\mathcal{J}$ by this torsion subgroup is naturally identified with a subgroup of finite index in the Galois group of the maximal abelian cover of $X$ that is unramified outside the ramification locus $\lambda^{-1}(\{0,1,\infty\})$.   The latter Galois group is $T_{\lambda}(X)$.  There is an $H$-equivariant action of $A(\pi)$ on $T_{\widetilde{\lambda}}(Y)$, and there is a finite index subgroup $A_X(\pi)$ of $A(\pi)$ that acts on $T_\lambda(X)$.

There is a finite extension $F$ of $\mathbb{Q}$ over which $X$, $Y$,  $J_{\lambda}(X)$, $J_{\widetilde{\lambda}}(Y)$ and the action of $H$ on $Y$ are defined. There is a natural action of $G_F=\mathrm{Gal}(\overline{\mathbb{Q}}/F)$ on $T_{\widetilde{\lambda}}(Y)$ and $T_{\lambda}(X)$.  There are finite index normal subgroups $A_Y$ of $A(\pi)$ and $A_X$ of $A_X(\pi)$ such that the action of $G_F \cap \iota^{-1}(A_Y)$ (resp. $G_F \cap \iota^{-1}(A_X)$) on $T_{\widetilde{\lambda}}(Y)$ (resp. $T_{\lambda}(X)$)  agrees with the action of $A_Y$ (resp. $A_X$) under  Bely\u{\i}'s embedding $G_F \xrightarrow{\iota} \mathrm{Aut}(\widehat{F}_2)$.  
\end{theorem}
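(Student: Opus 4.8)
The plan is to route everything through geometric class field theory, identifying $\overline{R}$ and its $\mathcal{J}$-coinvariants with the relevant Tate modules, and then to match the combinatorial $A(\pi)$-action furnished by Theorem \ref{thm:main2} against the arithmetic Galois action by means of the explicit Bely\u{\i} construction of \S\ref{s:GT} together with Lemma \ref{lem:extendit}. \textbf{Identification of $\overline{R}$.} Writing $Y^\circ = \widetilde{\lambda}^{-1}(\mathbb{P}^1_{\overline{\mathbb{Q}}} - \{0,1,\infty\})$, the $H$-cover $Y^\circ \to \mathbb{P}^1_{\overline{\mathbb{Q}}} - \{0,1,\infty\}$ attached to $\pi$ has étale fundamental group $R = \mathrm{Ker}(\pi)$, so $\overline{R} = \pi_1(Y^\circ)^{\mathrm{ab}}$ is the Galois group of the maximal abelian cover of $Y$ unramified outside $\widetilde{\lambda}^{-1}(\{0,1,\infty\})$. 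Geometric class field theory for the generalized Jacobian identifies this group, canonically and $H$-equivariantly, with $T_{\widetilde{\lambda}}(Y) = \prod_\ell T_\ell(J_{\widetilde{\lambda}}(Y))$; a Riemann--Hurwitz count confirms that both sides have $\widehat{\mathbb{Z}}$-rank $|H|+1 = 2\,\mathrm{genus}(Y) + |\widetilde{\lambda}^{-1}(\{0,1,\infty\})| - 1$, matching Theorem \ref{thm:main2}.

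\textbf{Descent to $X$.} Since $X = Y/\mathcal{J}$, the open curve $X^\circ$ corresponds to the subgroup $\pi^{-1}(\mathcal{J}) \le \widehat{F}_2$, and the same theory gives $\pi^{-1}(\mathcal{J})^{\mathrm{ab}} \cong T_\lambda(X)$. I would feed the extension $1 \to R \to \pi^{-1}(\mathcal{J}) \to \mathcal{J} \to 1$ into the five-term exact sequence in continuous homology,
$$H_2(\mathcal{J}) \to \overline{R}_\mathcal{J} \to \pi^{-1}(\mathcal{J})^{\mathrm{ab}} \to \mathcal{J}^{\mathrm{ab}} \to 0,$$
and use that, $\mathcal{J}$ being finite, both the Schur multiplier $H_2(\mathcal{J})$ and $\mathcal{J}^{\mathrm{ab}}$ are finite. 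As $T_\lambda(X)$ is torsion-free, the kernel of $\overline{R}_\mathcal{J} \to T_\lambda(X)$ is exactly the finite torsion subgroup, so $\overline{R}_\mathcal{J}^{\mathrm{cotor}}$ injects onto a finite-index subgroup of $T_\lambda(X)$; geometrically this arrow is the pushforward $J_{\widetilde{\lambda}}(Y) \to J_\lambda(X)$. Because $\rho$ is $H$-equivariant, $A(\pi)$ descends to act on $\overline{R}_\mathcal{J}^{\mathrm{cotor}}$; extending scalars to $\widehat{\mathbb{Z}} \otimes \mathbb{Q}$, each element permutes the finitely many lattices between $\overline{R}_\mathcal{J}^{\mathrm{cotor}}$ and $c^{-1}\overline{R}_\mathcal{J}^{\mathrm{cotor}}$, where $c$ is the index of $\overline{R}_\mathcal{J}^{\mathrm{cotor}}$ in $T_\lambda(X)$, so the stabilizer $A_X(\pi)$ of $T_\lambda(X)$ has finite index in $A(\pi)$ and acts on $T_\lambda(X)$.

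\textbf{Matching the two actions.} I would enlarge $F$ so that Lemma \ref{lem:extendit} applies with $F^\dagger = F$. The natural $G_F$-action on $T_{\widetilde{\lambda}}(Y) = \pi_1(Y^\circ_{\overline{\mathbb{Q}}})^{\mathrm{ab}}$ is, via geometric class field theory compatibly with Galois descent, conjugation by any arithmetic lift $\widetilde{\sigma} \in \mathrm{Gal}(L/F(Y))$ of $\sigma$. Lemma \ref{lem:extendit} yields $\iota(\sigma) \in A(\pi)$ and writes Bely\u{\i}'s lift as $\sigma' = \nu^{-1}\gamma^{-1}\widetilde{\sigma}$ with $\gamma, \nu \in R$ (both having trivial image in $H$); since inner automorphisms act trivially on $R^{\mathrm{ab}}$, conjugation by $\sigma'$ and by $\widetilde{\sigma}$ coincide on $\overline{R} = R^{\mathrm{ab}}$. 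Hence $\rho(\iota(\sigma))$ equals the Galois action of $\sigma$ on $T_{\widetilde{\lambda}}(Y)$ for every $\sigma \in G_F$. Both actions commute with $H$, so they descend through the previous step and agree on $\overline{R}_\mathcal{J}^{\mathrm{cotor}}$, and therefore on $T_\lambda(X)$ since $\overline{R}_\mathcal{J}^{\mathrm{cotor}}$ spans $T_\lambda(X) \otimes \mathbb{Q}$. Taking $A_Y \trianglelefteq A(\pi)$ and $A_X \trianglelefteq A_X(\pi)$ to be finite-index normal subgroups (for instance their normal cores in $A$) then yields the stated assertion.

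The step I expect to be the main obstacle is this last matching: reconciling the two incarnations of $\overline{R}$ --- as $\pi_1(Y^\circ)^{\mathrm{ab}}$ carrying the combinatorial $A(\pi)$-action, and as the Tate module $T_{\widetilde{\lambda}}(Y)$ carrying the arithmetic $G_F$-action --- and verifying that Bely\u{\i}'s lift differs from a genuine arithmetic lift by an element of $R$. This is precisely what Lemma \ref{lem:extendit} is engineered to supply, and passing to the abelianization is what dissolves the residual ambiguity.
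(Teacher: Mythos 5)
Your proposal is correct and follows essentially the same route as the paper's proof: the Serre identification of $\overline{R}$ with $T_{\widetilde{\lambda}}(Y)$, a coinvariants computation with finite kernel and cokernel (your continuous-homology five-term sequence is the Pontryagin dual of the paper's Hochschild--Serre argument with $\mathbb{Q}/\mathbb{Z}$-coefficients applied to a central extension), a finite-index stabilizer argument producing $A_X(\pi)$, and the matching of the two actions via Lemma \ref{lem:extendit} combined with the triviality of inner automorphisms on $R^{\mathrm{ab}}$. The only notable divergence is at the end: you enlarge $F$ once and for all, which suffices because the theorem is existential in $F$, whereas the paper keeps the given $F$ fixed and instead shrinks $A_Y$ and $A_X$, using that $\iota$ is a homeomorphism onto its image (citing Zalesskii) so that the shrunken subgroups meet $\iota(G_F)$ inside $\iota(G_{F^\dagger})$.
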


\begin{proof} 
The identification of $\overline{R}$ with $T_{\widetilde{\lambda}}(Y)$ is shown by Serre in \cite[\S I.2]{GroupesAlgebriques}. Let $L$ be the maximal abelian extension of $\overline{\mathbb{Q}}(Y)$ that is Galois over $\overline{\mathbb{Q}}(X)$, unramified outside $\widetilde{\lambda}^{-1}(\{0,1,\infty\})$ and for which $\mathrm{Gal}(L/\overline{\mathbb{Q}}(Y))$ is central in $\mathrm{Gal}(L/\overline{\mathbb{Q}}(X))$.  Then $L$ contains the maximal abelian extension $L'$ of $\overline{\mathbb{Q}}(X)$ that is unramified outside $\lambda^{-1}(\{0,1,\infty\})$. Furthermore, $\mathrm{Gal}(L/\overline{\mathbb{Q}}(Y)) = T_{\widetilde{\lambda}}(Y)_\mathcal{J}$ and there is a central extension of groups
$$1 \to T_{\widetilde{\lambda}}(Y)_\mathcal{J} \to \mathrm{Gal}(L/\overline{\mathbb{Q}}(X)) \to \mathcal{J} \to 1.$$
Letting these groups act trivially on $\mathbb{Q}/\mathbb{Z}$, the Hochschild-Serre spectral sequence gives an exact sequence of low degree terms
$$0 \to H^1(\mathcal{J},\mathbb{Q}/\mathbb{Z}) \to H^1(\mathrm{Gal}(L/\overline{\mathbb{Q}}(X)),\mathbb{Q}/\mathbb{Z}) \to H^1(T_{\widetilde{\lambda}}(Y)_\mathcal{J},\mathbb{Q}/\mathbb{Z}) \to H^2(\mathcal{J},\mathbb{Q}/\mathbb{Z}).$$
Here $H^i(\mathcal{J},\mathbb{Q}/\mathbb{Z})$ is finite for $i 	\ge 1$.  The maximal abelian quotient of $\mathrm{Gal}(L/\overline{\mathbb{Q}}(X))$ is $\mathrm{Gal}(L'/\overline{\mathbb{Q}}(X)) = T_\lambda(X)$ and
$$H^1(\mathrm{Gal}(L/\overline{\mathbb{Q}}(X)),\mathbb{Q}/\mathbb{Z})  = \mathrm{Hom}(\mathrm{Gal}(L'/\overline{\mathbb{Q}}(X)), \mathbb{Q}/\mathbb{Z}).$$
Moreover,
$$H^1(T_{\widetilde{\lambda}}(Y)_\mathcal{J},\mathbb{Q}/\mathbb{Z}) = \mathrm{Hom}(T_{\widetilde{\lambda}}(Y)_\mathcal{J},\mathbb{Q}/\mathbb{Z}).$$
Since $T_{\widetilde{\lambda}}(Y)_\mathcal{J}$ is abelian, we obtain that  $T_{\widetilde{\lambda}}(Y)_\mathcal{J}$ maps with finite kernel and cokernel to  $\mathrm{Gal}(L'/\overline{\mathbb{Q}}(X)) = T_\lambda(X)$.  Because $T_\lambda(X)$ is torsion free, we conclude that $T_{\widetilde{\lambda}}(Y)_\mathcal{J}$ has finite torsion, and the quotient $T_{\widetilde{\lambda}}(Y)_\mathcal{J}^{\mathrm{cotor}}$ of $T_{\widetilde{\lambda}}(Y)_\mathcal{J}$ by its torsion subgroup is isomorphic to a subgroup of finite index in $T_\lambda(X)$.  

We now identify $T_{\widetilde{\lambda}}(Y)$ with $\overline{R}$ and apply Theorem \ref{thm:main2}.  This shows that there is an $H$-equivariant action of $A(\pi)$ on $T_{\widetilde{\lambda}}(Y)$. In particular, this action  descends to an action of $A(\pi)$ on $T_{\widetilde{\lambda}}(Y)_\mathcal{J}^{\mathrm{cotor}}$.  Let $m$ be the index of $T_{\widetilde{\lambda}}(Y)_\mathcal{J}^{\mathrm{cotor}}$ in $ T_\lambda(X)$.   We have a sequence of inclusions
$$m\, T_{\widetilde{\lambda}}(Y)_\mathcal{J}^{\mathrm{cotor}}\subset m\, T_\lambda(X) \subset T_{\widetilde{\lambda}}(Y)_\mathcal{J}^{\mathrm{cotor}}.$$
Since $A(\pi)$ acts on the finite group $T_{\widetilde{\lambda}}(Y)_\mathcal{J}^{\mathrm{cotor}}/m\, T_{\widetilde{\lambda}}(Y)_\mathcal{J}^{\mathrm{cotor}}$, there exists a finite index subgroup $A_X(\pi)$ of $A(\pi)$ such that $A_X(\pi)$ acts trivially on $T_{\widetilde{\lambda}}(Y)_\mathcal{J}^{\mathrm{cotor}}/m\, T_{\widetilde{\lambda}}(Y)_\mathcal{J}^{\mathrm{cotor}}$. Therefore, $A_X(\pi)$ preserves $m\,T_\lambda(X)$ and hence $A_X(\pi)$ acts on $T_\lambda(X)$.

It remains to show that we can  shrink $A(\pi)$ and $A_X(\pi)$ to smaller finite index normal subgroups $A_Y$ and $A_X$, respectively, if necessary, so that the action of $G_F \cap \iota^{-1}(A_Y)$ (resp. $G_F \cap \iota^{-1}(A_X)$) on $T_{\widetilde{\lambda}}(Y)$ (resp. $T_{\lambda}(X)$)  agrees with the action of $A_Y$ and $A_X$ under  Bely\u{\i}'s embedding $\iota:G_F \to \mathrm{Aut}(\widehat{F}_2)$.  We will use the description of $\iota$ given in \S \ref{s:GT}. 

In the arguments below, we will need to enlarge $F$ to a finite extension $F^\dagger$ of $F$.  Since $\iota:G_F \to \mathrm{Aut}(\widehat{F}_2)$ is a continuous injective homomorphism of profinite groups, it follows from \cite[Prop. 2.1.5]{Zalesskii} that $\iota$ is a homeomorphism onto its image when the image is given the topology induced by that of $\mathrm{Aut}(\widehat{F}_2)$.  Therefore, there are finite index normal subgroups of $A(\pi)$ and $A_X(\pi)$ with the property that their intersection with $\iota(G_F)$ is contained in $\iota(G_{F^\dagger})$. Thus in what follows we are free to enlarge $F$ by a finite extension in order to prove the existence of finite index subgroups $A_Y$ and $A_X$ with the desired properties.
 
By Lemma \ref{lem:extendit}, we can now replace $F$ by a larger finite extension $F^\dagger$ to be able to assume that if $\sigma \in G_F$ then $\iota(\sigma)$ is conjugation by an element $\sigma' \in \mathrm{Gal}(L/F(Y))$ which restricts to $\sigma$ on $\overline{\mathbb{Q}} \subset L$. We have a tower of fields $F(Y) \subset \overline{\mathbb{Q}}(Y) \subset \overline{\mathbb{Q}}(Y)^{\mathrm{ab}}\subset L$ in which $\overline{\mathbb{Q}}(Y)^{\mathrm{ab}}$ is the largest abelian extension of $\overline{\mathbb{Q}}(Y)$ inside $L$.  By   \cite[\S I.2]{GroupesAlgebriques}, $\mathrm{Gal}(\overline{\mathbb{Q}}(Y)^{\mathrm{ab}}/\overline{\mathbb{Q}}(Y))$ is naturally isomorphic to the adelic Tate module $T_{\widetilde{\lambda}}(Y)$, and the action of $\sigma$ on $T_{\widetilde{\lambda}}(Y)$ corresponds to the conjugation action of any lift $\widehat{\sigma}$ of $\sigma$ to an element of $\mathrm{Gal}(\overline{\mathbb{Q}}(Y)^{\mathrm{ab}}/F(Y))$.  Note that this conjugation action does not depend on the choice of the lift $\widehat{\sigma}$ because $\mathrm{Gal}(\overline{\mathbb{Q}}(Y)^{\mathrm{ab}}/\overline{\mathbb{Q}}(Y))$ is an abelian normal subgroup of  $\mathrm{Gal}(\overline{\mathbb{Q}}(Y)^{\mathrm{ab}}/F(Y))$.  However, we know that the action of $\iota(\sigma)$ on $\mathrm{Gal}(\overline{\mathbb{Q}}(Y)^{\mathrm{ab}}/\overline{\mathbb{Q}}(Y))$ is via the conjugation action by $\sigma' \in \mathrm{Gal}(L/F(Y))$, so the action of $\iota(\sigma)$ on $\mathrm{Gal}(\overline{\mathbb{Q}}(Y)^{\mathrm{ab}}/\overline{\mathbb{Q}}(Y))$ agrees with the conjugation action of $\widehat{\sigma}$.  This shows that the action of $\iota(\sigma)$ on $T_{\widetilde{\lambda}}(Y)$ agrees with the natural action of $\sigma$. 
\end{proof}

\begin{remark}
\label{rem:ell}  
The statements  in Theorem \ref{thm:restriction} hold if one replaces $T_{\widetilde{\lambda}}(Y)$  and $T_{\lambda}(X)$ by their maximal pro-$\ell$ quotients $T_{\ell,\widetilde{\lambda}}(Y)$ and $T_{\ell,\lambda}(X)$, the latter being the $\ell$-adic Tate modules of the generalized Jacobians of $Y$ and $X$ with respect to $\widetilde{\lambda}^{-1}(\{0,1,\infty\})$ and $\lambda^{-1}(\{0,1,\infty\})$, respectively.
\end{remark}

We obtain the following consequence of Theorem \ref{thm:restriction}.

\begin{corollary}
\label{cor:Rtilde}
Assume the notation from Theorem \ref{thm:restriction}. Suppose $\widetilde{I}$ is a (closed) normal  subgroup of $\widehat{F}_2$ such that $[R,R]\subset \widetilde{I} \subset R$, and let $\widetilde{R}=R/\widetilde{I}$. Then $\widetilde{R}$ is a quotient module of $\overline{R}=T_{\widetilde{\lambda}}(Y)$ and there is an $H$-equivariant action of $A(\pi,\widetilde{I})\subset \mathrm{Aut}(\widehat{F}_2)$ on $\widetilde{R}$.  The kernel of the surjection $\overline{R} \to \widetilde{R}$ has stabilizer $G_{F'}$  in $G_F$ for some extension $F'$ of $F$ in $\overline{\mathbb{Q}}$.  
\begin{enumerate}
\item[i.] 
There is a finite index normal subgroup $A_{Y,\widetilde{I}}$ of $A(\pi,\widetilde{I})$ such that the action of $G_{F'} \cap \iota^{-1}(A_{Y,\widetilde{I}})$ on $\widetilde{R}$ agrees with the action of $A_{Y,\widetilde{I}}$ under  Bely\u{\i}'s embedding $G_{F'} \xrightarrow{\iota} \mathrm{Aut}(\widehat{F}_2)$.
\item[ii.] There is a finite extension $F''$ of $F'$ in $\overline{\mathbb{Q}}$ such that the action of $G_{F''} \cap \iota^{-1}(A(\pi, \widetilde{I}))$ agrees with the action of $A(\pi, \widetilde{I})$ under Bely\u{\i}'s embedding $G_{F''} \xrightarrow{\iota} \mathrm{Aut}(\widehat{F}_2)$.
\end{enumerate}
\end{corollary}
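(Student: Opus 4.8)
The plan is to deduce the corollary from Theorem~\ref{thm:restriction} together with the Galois correspondence, the only real work being to arrange that the two actions on $\widetilde{R}$ we wish to compare are genuinely defined, i.e. that the relevant subgroups stabilize the kernel of $\overline{R}\to\widetilde{R}$ on the nose. First I would dispose of the preliminary assertions. Since $[R,R]\subset\widetilde{I}\subset R$, one has $\widetilde{R}=R/\widetilde{I}=\overline{R}/K$ with $K:=\widetilde{I}/[R,R]$, so $\widetilde{R}$ is a quotient module of $\overline{R}=T_{\widetilde{\lambda}}(Y)$; applying Theorem~\ref{thm:main} with $d=2$ to $(\pi,\widetilde{I})$ produces the surjection $\rho\colon A(\pi,\widetilde{I})\to\mathrm{Aut}_{H,\beta}(\widetilde{R})$, whose image consists of $H$-module automorphisms, giving the asserted $H$-equivariance. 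For the stabilizer claim I would observe that $G_F$ acts continuously and $\widehat{\mathbb{Z}}$-linearly on $\overline{R}=T_{\widetilde{\lambda}}(Y)$ and that $K$ is a closed submodule, so $\Stab_{G_F}(K)=\{\sigma\in G_F:\sigma K\subseteq K\}\cap\{\sigma\in G_F:\sigma^{-1}K\subseteq K\}$ is a closed subgroup of $G_F$; by the Galois correspondence it equals $G_{F'}=\mathrm{Gal}(\overline{\mathbb{Q}}/F')$ for $F'$ its fixed field, an extension of $F$ in $\overline{\mathbb{Q}}$.

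For part (i), let $A_Y\trianglelefteq A(\pi)$ be the finite-index normal subgroup supplied by Theorem~\ref{thm:restriction}, for which the natural action of each $\sigma\in G_F\cap\iota^{-1}(A_Y)$ on $\overline{R}$ coincides with the action of $\iota(\sigma)$. I would set $A_{Y,\widetilde{I}}:=A(\pi,\widetilde{I})\cap A_Y$, which is normal and of finite index in $A(\pi,\widetilde{I})$ because $A_Y$ is normal of finite index in $A(\pi)\supseteq A(\pi,\widetilde{I})$ (the inclusion holding as $[R,R]$ is characteristic in $R$). Then for $\sigma\in G_{F'}\cap\iota^{-1}(A_{Y,\widetilde{I}})$ three things hold simultaneously: $\iota(\sigma)\in A(\pi,\widetilde{I})$ preserves $\widetilde{I}$, hence $K$, and so descends to $\widetilde{R}$; $\sigma\in G_{F'}=\Stab_{G_F}(K)$ makes the natural action descend as well; and $\iota(\sigma)\in A_Y$ forces the two actions to agree on all of $\overline{R}$. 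Two equal endomorphisms of $\overline{R}$ that both stabilize $K$ induce one and the same endomorphism of $\overline{R}/K=\widetilde{R}$, which is exactly the claimed agreement.

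For part (ii) I would trade the shrinking of $A(\pi,\widetilde{I})$ for an enlargement of the base field. Since $A_Y$ is an open subgroup of $\mathrm{Aut}(\widehat{F}_2)$ and $\iota$ is continuous, $\iota^{-1}(A_Y)\cap G_F$ is open in $G_F$ and hence contains $G_{F_0}$ for some finite extension $F_0/F$. Taking $F''=F'F_0$, which is finite over $F'$ because $F_0/F$ is finite, gives $G_{F''}=G_{F'}\cap G_{F_0}\subseteq G_{F'}\cap\iota^{-1}(A_Y)$. Repeating the threefold verification of part (i) for $\sigma\in G_{F''}\cap\iota^{-1}(A(\pi,\widetilde{I}))$—where $\iota(\sigma)\in A(\pi,\widetilde{I})$ now comes from the intersection, $\sigma\in G_{F'}$ makes the natural action descend, and $\sigma\in\iota^{-1}(A_Y)$ gives agreement on $\overline{R}$—then yields the statement. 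The point I would check most carefully, and the genuine obstacle, is exactly this simultaneous descent: before the two maps on $\widetilde{R}$ can be compared each must actually be defined there, which is why the exact stabilizer $G_{F'}$ and the exact condition $\iota(\sigma)\in A(\pi,\widetilde{I})$ both have to enter, rather than mere finite-index approximations of them.
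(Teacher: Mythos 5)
Your proposal is correct and follows essentially the same route as the paper: the paper likewise sets $A_{Y,\widetilde{I}} = A(\pi,\widetilde{I}) \cap A_Y$ for part (i), and obtains part (ii) by converting the finite index of $A_{Y,\widetilde{I}}$ in $A(\pi,\widetilde{I})$ into a finite extension $F''$ of $F'$ by means of $\iota$. Your explicit appeal to the openness of $A_Y$ in $\mathrm{Aut}(\widehat{F}_2)$ to produce $F''$ as a compositum is precisely the point that the paper's terse ``therefore'' leaves implicit; this openness is not literally recorded in the statement of Theorem \ref{thm:restriction} (which says only ``finite index''), but it does hold for the $A_Y$ constructed in its proof, and both your argument and the paper's rest on it.
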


\begin{proof}  
The first statement follows  by letting $A_{Y,\widetilde{I}}$ be the intersection of $A(\pi,\widetilde{I})$ with the subgroup $A_Y$ appearing in the last sentence of the statement of Theorem \ref{thm:restriction}.  For the second, observe that since $A_{Y,\widetilde{I}}$ has finite index in $A(\pi,\widetilde{I})$, the kernel of the homomorphism $G_{F'} \cap \iota^{-1}(A(\pi,\widetilde{I}))  \to A(\pi,\widetilde{I})/A_{Y,\widetilde{I}}$ induced by $\iota$ has finite index in $G_{F'}\cap \iota^{-1}(A(\pi,\widetilde{I}))$. Therefore, this kernel equals $G_{F''} \cap \iota^{-1}(A(\pi, \widetilde{I}))$ for some finite extension $F''$ of $F'$.
\end{proof}

\begin{example}
\label{ex:nice}
Assume the notation from Corollary \ref{cor:Rtilde}. If $\overline{I}$ is a subgroup of $\overline{R}$ that is stabilized by the action of $H$ on $\overline{R}$, then $\overline{I}=\widetilde{I}/[R,R]$ for a normal subgroup $\widetilde{I}$ of $\widehat{F}_2$. Fix a prime number $p$. If $\overline{I}_p$ is the subgroup of $\overline{R}= \widehat{\mathbb{Z}}^{1 + |H|}$ generated by all factors isomorphic to $\mathbb{Z}_p$, then $\overline{I}_p=\widetilde{I}_p/[R,R]$ for a normal subgroup $\widetilde{I}_p$ of $\widehat{F}_2$. Moreover, the action of $G_F$ on $\overline{R}$ preserves $\overline{I}_p$.

Suppose now that we take $\widetilde{R}=R/\widetilde{I}$ to be the quotient of $\overline{R} = \widehat{\mathbb{Z}}^{1 + |H|}$ by the subgroup generated by all $\overline{I}_p$ for primes $p$ dividing $|H|$.  Then $\widetilde{I}$ is a normal subgroup of $\widehat{F}_2$, and the kernel of $\overline{R} \to \widetilde{R}$ is stable under all of $G_F$.  The action of $A(\pi)$ on $\overline{R}$ is $\widehat{\mathbb{Z}}$-linear, so $A(\pi)$ acts on $\widetilde{R}$ and $A(\pi,\widetilde{I}) = A(\pi)$.  If $\mathcal{L}'_H$ is the set of all primes $\ell$ not dividing $|H|$, then
$$\widetilde{R}=\prod_{\ell\in\mathcal{L}'_H}T_{\ell,\widetilde{\lambda}}(Y).$$
The extension class $\beta\in H^2(H,\widetilde{R})$ corresponding to $(\ref{eq:cut})$ is zero for this choice of $\widetilde{R}$. Therefore, it follows from Theorem  \ref{thm:main} that the image of the action of $A(\pi,\widetilde{I}) = A(\pi)$ on $\widetilde{R}$ is $\mathrm{Aut}_H(\widetilde{R})$.  Define
$$\widehat{\mathbb{Z}}' = \frac{\widehat{\mathbb{Z}}}{\prod_{p\not\in\mathcal{L}'_H}\mathbb{Z}_p}\cong \prod_{\ell\in\mathcal{L}'_H} \mathbb{Z}_\ell.$$
Then $\widetilde{R}$ is isomorphic to $\widehat{\mathbb{Z}}'[H]\oplus \widehat{\mathbb{Z}}'$ as an $H$-module since $|H|$ is a unit in $\widehat{\mathbb{Z}}'$ (see \cite[\S2]{Gruenberg}). Note that $\widehat{\mathbb{Z}}'[H] = \widehat{\mathbb{Z}}'\oplus M$ as $H$-module, where $M$ is the augmentation ideal of $\widehat{\mathbb{Z}}'[H]$. Hence, $\mathrm{Aut}_H(\widetilde{R})\cong \mathrm{GL}_2(\widehat{\mathbb{Z}}') \times \mathrm{Aut}_H(M)$. The right multiplication action of $\widehat{\mathbb{Z}}'[H]$ identifies the endomorphism ring $\mathrm{End}_{\widehat{\mathbb{Z}}'[H]}(M)$ with the quotient ring of the opposite ring $\widehat{\mathbb{Z}}'[H]^{\mathrm{op}}$ of $\widehat{\mathbb{Z}}'[H]$ modulo the ideal generated by the central idempotent $\frac{1}{|H|}\sum_{\sigma \in H} \sigma$.   Recall that the inversion $\sigma \mapsto \sigma^{-1}$ on $H$ extends to a ring  isomorphism of $\widehat{\mathbb{Z}}'[H]^{\mathrm{op}}$ with $\widehat{\mathbb{Z}}'[H]$.  In other words,
$$\mathrm{Aut}_H(\widetilde{R}) \cong \mathrm{GL}_2(\widehat{\mathbb{Z}}') \times 
\mathrm{GL}_1 \left( \frac{\widehat{\mathbb{Z}}'[H]}{\widehat{\mathbb{Z}}'[H]\cdot \left(\sum_{\sigma \in H} \sigma\right) } \right)$$
since $|H|$ is a unit in $\widehat{\mathbb{Z}}'$. Corollary \ref{cor:Rtilde} says that there is a finite index normal subgroup $A_{Y,\widetilde{I}}$ of $A(\pi,\widetilde{I}) = A(\pi)$ such that the action of   $G_F \cap \iota^{-1}(A_{Y,\widetilde{I}})$ on $\widetilde{R}$ extends to an action of $A_{Y,\widetilde{I}}$ under $\iota$.
\end{example}

\section{Actions on the Jacobian versus the generalized Jacobian}
\label{s:JacobianAlone}

Theorems \ref{thm:main1} and \ref{thm:restriction} show that the action of the absolute Galois group $G_{\mathbb{Q}}=\mathrm{Gal}(\overline{\mathbb{Q}}/\mathbb{Q})$ on the adelic Tate module of the generalized Jacobian can be virtually extended to an action of $\mathrm{Aut}(\widehat{F}_2)$.  In this section, we show that, in general, one indeed needs the full structure of the adelic Tate module of the generalized Jacobian and that the action of $G_{\mathbb{Q}}$ on the adelic Tate module of the Jacobian cannot be virtually extended to $\mathrm{Aut}(\widehat{F}_2)$. Moreover, we give a sufficient condition to distinguish between these two possibilities, which is also necessary when we have a Bely\u{\i} cover that is Galois. 

We use the notation from \S\ref{s:GT} and \S\ref{s:GalReps}.  In particular, we identify $\widehat{F}_2$ with the \'etale fundamental group $\pi_1(\mathbb{P}^1_{\overline{\mathbb{Q}}} - \{0,1,\infty\},\eta)$, and we view $G_{\mathbb{Q}}$ as a subgroup of $\mathrm{Aut}(\widehat{F}_2)$ via the injection $\iota$ of (\ref{eq:inject}).  As in \S\ref{s:GalReps}, let $\lambda:X \to \mathbb{P}^1_{\overline{\mathbb{Q}}}$ be a Bely\u{\i} cover, i.e. $X$ is a smooth projective curve over $\overline{\mathbb{Q}}$ and $\lambda$ is a non-constant morphism which is unramified outside $\{0,1,\infty\}$. Let $H$ be the Galois group of the Galois closure $\widetilde{\lambda}:Y  \to \mathbb{P}^1_{\overline{\mathbb{Q}}}$ of $\lambda$. Let $\pi:\widehat{F}_2 \to H$ be the homomorphism associated to $\widetilde{\lambda}$ and the choice of a geometric point of $Y$ over $\eta$.  

Let $\mathcal{J}\subset H$ be the Galois group of $Y$ over $X$, let $\mathcal{W}$ be the normalizer of $\mathcal{J}$ in $H$, and let $Z = Y/\mathcal{W}$.  We can factor $\lambda = \kappa\circ\delta$ where $\delta: X \to Z$ is a Galois cover with Galois group $\mathcal{D} = \mathcal{W}/\mathcal{J}$ and $\kappa:Z \to \mathbb{P}^1_{\overline{\mathbb{Q}}}$ is a Bely\u{\i} cover. For each point $j \in \kappa^{-1}(\{0,1,\infty\})$ let $j_X$ be a point over $j$ in $X$ and let $d_{j_X}$ be a generator for the inertia group of $j_X$ in $\mathcal{D}$.

Consider now the natural surjection $J_\lambda(X) \to J(X)$ from the generalized Jacobian of $X$ with respect to $\lambda^{-1}(\{0,1,\infty\})$ to   the Jacobian of $X$.  Recall that the points of $J_\lambda(X)$ over $\overline{\mathbb{Q}}$ are divisors of degree zero on $X$ prime to $\lambda^{-1}(\{0,1,\infty\})$ modulo divisors of the form $\mathrm{div}(f)$ for rational functions $f \in \overline{\mathbb{Q}}(X)^*$ such that $f(x)  = 1$ if  $x \in \lambda^{-1}(\{0,1,\infty\})$.  The principal divisor group of $X$ is isomorphic to $\overline{\mathbb{Q}}(X)^*/\overline{\mathbb{Q}}^*$.  Consider the homomorphism from the group of elements $g \in \overline{\mathbb{Q}}(X)^*$  that have no zeros or poles in $\lambda^{-1}(\{0,1,\infty\})$ to the product $\prod_{x \in \lambda^{-1}(\{0,1,\infty\})} \overline{\mathbb{Q}}^*$ which sends $g$ to the element of the product with component $g(x)$ at $x$.  This homomorphism induces an isomorphism between the algebraic group that is the kernel of $J_\lambda(X) \to J(X)$ and the split torus 
$$T :=\frac{\prod_{x \in \lambda^{-1}(\{0,1,\infty\})} \mathbb{G}_m}{\mathrm{diag}(\mathbb{G}_m)}.$$ 
The exact sequence
$$1 \to T \to J_\lambda(X) \to J(X) \to 1$$
gives an exact sequence of $\ell$-adic Tate modules
\begin{equation}
\label{eq:genjac2}
0 \to \frac{\prod_{x \in \lambda^{-1}(\{0,1,\infty\})} \mathbb{Z}_\ell(1)}{\mathrm{diag}(\mathbb{Z}_\ell(1))} \to T_{\ell,\lambda}(X) \to T_\ell(X) \to 0.
\end{equation}
This sequence is exact as a sequence of $G_F$-modules once $F$ is a number field over which $X$, $J(X)$, $J_\lambda(X)$ and the action of $\mathcal{D}$ on $X$ are defined.  

The set $\lambda^{-1}(\{0,1,\infty\}) = \delta^{-1} ( \kappa^{-1}(\{0,1,\infty\}) )$ is the $\mathcal{D}$-set formed by the disjoint union of the orbits of the $j_X$ as $j$ ranges over  $\kappa^{-1}(\{0,1,\infty\})$. We find that the tensor product of the left term in the sequence (\ref{eq:genjac2}) with $\mathbb{Q}_\ell$ over $\mathbb{Z}_\ell$ is isomorphic to 
\begin{equation}
\label{eq:leftterm2}
\frac{\bigoplus_{j \in \kappa^{-1}(\{0,1,\infty\})} \mathbb{Q}_\ell[\mathcal{D}/\langle d_{j_X} \rangle] }{\mathbb{Q}_\ell}
\end{equation}
as a $\mathbb{Q}_\ell[\mathcal{D}]$-module, where the denominator here is embedded into the numerator by sending $1 \in \mathbb{Q}_\ell$ to the sum of all the cosets of $\langle d_{j_X} \rangle$ as $j$ ranges over $\kappa^{-1}(\{0,1,\infty\}) $.  When we tensor the middle term of (\ref{eq:genjac2}) with $\mathbb{Q}_\ell$ over $\mathbb{Z}_\ell$, we get the $\mathbb{Q}_\ell[\mathcal{D}]$-module that is the group of $\mathcal{J}$-coinvariants of $\mathbb{Q}_\ell \oplus \mathbb{Q}_{\ell}[H]$ by Gasch\"utz's theorem (see \cite[\S2]{Gruenberg}).  

Let $A_X(\pi)$ be the finite index subgroup of $A(\pi)$ from the statement of Theorem \ref{thm:restriction}, so we have a well-defined action of $A_X(\pi)$ on $T_{\ell,\lambda}(X)$ as in Remark \ref{rem:ell}. Let $A_1$ be a finite index subgroup of $A_X(\pi)$.  We will say that the action of $A_1$ on $T_{\ell,\lambda}(X)$ descends to an action on the Tate module $T_\ell(X)$ if the action of $A_1$ respects the terms of the sequence $(\ref{eq:genjac2})$.  We now give a sufficient condition for this to occur which is also necessary when $X = Y$. In the latter case,   $A_X(\pi)=A(\pi)$ and $\lambda = \widetilde{\lambda}$.

\begin{theorem}
\label{thm:preserve2} Let $A_1$ be a finite index subgroup of $A_X(\pi)$. 
 A sufficient condition for the action of $A_1$ on $T_{\ell,\lambda}(X)$ to descend to an action on $T_\ell(X)$  is that for  every irreducible representation $V$ of $\mathcal{D}$ over $\overline{\mathbb{Q}}$, 
$$n_V :=  - \,\mathrm{dim}_{\overline{\mathbb{Q}}} (V^{\mathcal{D}})+ \sum_{j \in \kappa^{-1}(\{0,1,\infty\})} \mathrm{dim}_{\overline{\mathbb{Q}}}(V^{\langle d_{j_X}\rangle})  $$ 
is equal to either $0$ or 
$$m_V := \mathrm{dim}_{\overline{\mathbb{Q}}}(V^{\mathcal{D}})  + [H:\mathcal{W}]\cdot \mathrm{dim}_{\overline{\mathbb{Q}}}(V).$$   
This condition for $V$ holds if and only if  $\overline{\mathbb{Q}}_\ell \otimes_{\overline{\mathbb{Q}}} V$ occurs in exactly one of the $\overline{\mathbb{Q}}_\ell[\mathcal{D}]$-modules that result from tensoring either the left or right term of $(\ref{eq:genjac2})$ with $\overline{\mathbb{Q}}_\ell$ over $\mathbb{Z}_\ell$. If $X = Y$, then $\mathcal{D} = H=\mathcal{W}$ and the above sufficient condition is also necessary.  Furthermore, if $X = Y$, then the condition holds for all $V$ of dimension $1$.
\end{theorem}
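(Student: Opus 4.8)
The plan is to reformulate the descent condition as a statement about a single $\overline{\mathbb{Q}}_\ell[\mathcal{D}]$-module and then read it off from multiplicities. Saying that the action of $A_1$ respects the terms of (\ref{eq:genjac2}) means exactly that $A_1$ preserves the left-hand (torus) term $L$ inside $M := T_{\ell,\lambda}(X)$. Since $A_1$ acts $\mathbb{Z}_\ell$-linearly and the quotient $T_\ell(X)$ is torsion-free, the submodule $L$ is saturated, so $A_1$ preserves $L$ if and only if it preserves $L\otimes_{\mathbb{Z}_\ell}\overline{\mathbb{Q}}_\ell$; thus I may work over $\overline{\mathbb{Q}}_\ell$ throughout. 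The essential structural input is that the $A_1$-action on $M$ is obtained by passing to $\mathcal{J}$-coinvariants from the $H$-equivariant $A(\pi)$-action on $T_{\widetilde{\lambda}}(Y)$, and therefore commutes with the residual action of $\mathcal{D}=\mathcal{W}/\mathcal{J}$. First I would pin down the two multiplicities. By Frobenius reciprocity the multiplicity of an irreducible $V$ in $\bigoplus_j \overline{\mathbb{Q}}_\ell[\mathcal{D}/\langle d_{j_X}\rangle]$ is $\sum_j \dim V^{\langle d_{j_X}\rangle}$, and removing the diagonal trivial summand of (\ref{eq:leftterm2}) subtracts $\dim V^{\mathcal{D}}$; hence $V$ occurs in $L$ with multiplicity exactly $n_V$. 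For the middle term, an orbit computation on $\mathcal{J}\backslash H$ gives $\overline{\mathbb{Q}}_\ell[\mathcal{J}\backslash H]\cong \overline{\mathbb{Q}}_\ell[\mathcal{D}]^{\oplus [H:\mathcal{W}]}$ as a $\mathcal{D}$-module, so the multiplicity of $V$ in $M$ equals $\dim V^{\mathcal{D}}+[H:\mathcal{W}]\cdot\dim V=m_V$, and the multiplicity of $V$ in the quotient $T_\ell(X)$ is $m_V-n_V$.

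With this in hand the sufficiency and the reformulation are immediate. Decompose $M\otimes\overline{\mathbb{Q}}_\ell$ into its $\mathcal{D}$-isotypic components $M_V$. Because $A_1$ commutes with $\mathcal{D}$ it preserves each $M_V$, and $L_V := L\cap M_V$ is precisely the $V$-isotypic part of $L$. The hypothesis $n_V\in\{0,m_V\}$ says that for every $V$ either $L_V=0$ or $L_V=M_V$; in either case $A_1$ preserves $L_V$, and summing over $V$ shows $A_1$ preserves $L\otimes\overline{\mathbb{Q}}_\ell$. Since this subspace is defined over $\mathbb{Q}_\ell$, the action descends to $T_\ell(X)$. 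The stated equivalence is the same bookkeeping: $V$ occurs in $L$ iff $n_V>0$ and in $T_\ell(X)$ iff $m_V-n_V>0$, so $n_V\in\{0,m_V\}$ is exactly the condition that $V$ occur in at most one of the two terms.

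For the necessity when $X=Y$ I first note that then $\mathcal{J}=1$, so $\mathcal{W}=H=\mathcal{D}$ and $\lambda=\widetilde{\lambda}$, and no coinvariants are taken. Here Theorem \ref{thm:main2} gives $\rho(A(\pi))=\mathrm{Aut}_{H,\beta}(\overline{R})$, which has finite index in $\mathrm{Aut}_H(\overline{R})$; hence the finite-index subgroup $A_1$ maps to a finite-index, and therefore Zariski-dense, subgroup of $\mathrm{Aut}_H(M)\otimes\overline{\mathbb{Q}}_\ell=\prod_V \mathrm{GL}(W_V)$, where $W_V$ is the multiplicity space of $V$ in $M$. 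If the condition fails for some $V$, i.e. $0<n_V<m_V$, then the $V$-isotypic part of $L$ corresponds to a proper nonzero subspace $W_V^L\subsetneq W_V$; a Zariski-dense subgroup of $\mathrm{GL}(W_V)$ cannot stabilize such a subspace, so $A_1$ does not preserve $L$ and the action does not descend. The main obstacle I anticipate is exactly this density step: one must check that passing to the finite-index subgroup $\mathrm{Aut}_{H,\beta}$ (and then to $A_1$) does not shrink the image inside any single $\mathrm{GL}(W_V)$ factor below a Zariski-dense subgroup, which uses that a finite-index subgroup of the relevant arithmetic group remains dense in the ambient $\overline{\mathbb{Q}}_\ell$-group.

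Finally, for the one-dimensional case with $X=Y$ I would argue directly. Here $[H:\mathcal{W}]=1$ and $\kappa^{-1}(\{0,1,\infty\})=\{0,1,\infty\}$, with inertia generators $d_0,d_1,d_\infty$ the images in $H$ of the standard generators of $\pi_1(\mathbb{P}^1_{\overline{\mathbb{Q}}}-\{0,1,\infty\})$, so that $d_0 d_1 d_\infty=1$ and $\langle d_0,d_1\rangle=H$. For the trivial character one computes $n=-1+3=2=m$. For a nontrivial character $\chi$ one has $m_\chi=1$, while $n_\chi$ equals the number of $j\in\{0,1,\infty\}$ with $\chi(d_j)=1$; the relation $\chi(d_0)\chi(d_1)\chi(d_\infty)=1$ together with $\langle d_0,d_1\rangle=H$ forces at most one of the three values $\chi(d_j)$ to equal $1$, since two of them being $1$ would make $\chi$ trivial. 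Hence $n_\chi\in\{0,1\}=\{0,m_\chi\}$, and the condition holds for every $V$ of dimension one.
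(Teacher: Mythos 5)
Your proof is correct, and while the multiplicity bookkeeping and the sufficiency argument coincide with the paper's (both reduce to the saturation of the torus term, the $\mathcal{D}$-isotypic decomposition over $\overline{\mathbb{Q}}_\ell$, and Frobenius reciprocity giving $n_V$ and $m_V$ as the multiplicities in the left and middle terms of (\ref{eq:genjac2})), your necessity argument for $X=Y$ is genuinely different. The paper also starts from Theorem \ref{thm:main} (applied to $T_{\ell,\lambda}(X)=R/\widetilde{I}_\ell$, which is the form you should cite rather than Theorem \ref{thm:main2} for the adelic module $\overline{R}$ --- a small but fixable imprecision in your write-up), but then proceeds $\ell$-adically: since the extension class is killed by $\ell^a$, every map $\mathrm{id}+\ell^a f$ with $f\in\mathrm{End}_{\mathbb{Z}_\ell[H]}(T_{\ell,\lambda}(X))$ is realized by $A_X(\pi)$; a binomial trick with $n=[A_X(\pi):A_2]$ for a normal finite-index $A_2\subset A_1$ pushes such elements into $A_1$, and a limit $a\to\infty$ inside the closed $\mathbb{Q}_\ell$-algebra generated by the image of $A_2$ shows that this algebra is all of $\mathrm{End}_{\mathbb{Q}_\ell[H]}(\mathbb{Q}_\ell\otimes_{\mathbb{Z}_\ell}T_{\ell,\lambda}(X))$, whence preservation of the torus term by $A_1$ would force all $H$-equivariant endomorphisms to preserve it, contradicting $0<n_V<m_V$. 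You instead note that the image of $A_1$ has finite index in $\mathrm{Aut}_{\mathbb{Z}_\ell[H]}(T_{\ell,\lambda}(X))$ (surjectivity onto $\mathrm{Aut}_{H,\beta}$ plus finiteness of $H^2(H,\cdot)$) and invoke Zariski density: the ambient group $\prod_V\mathrm{GL}(W_V)$ is connected, a finite-index subgroup of a Zariski-dense subgroup of a connected group is still dense (the coset argument), and a dense subgroup cannot stabilize a proper nonzero subspace $W_V^L$ of a multiplicity space. Your density step, which you rightly flag as the crux, does go through for exactly the reasons you give; what the paper's route buys is a self-contained, purely $\ell$-adic argument avoiding algebraic-group machinery, while yours is shorter and makes the finite-index passage transparent. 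Finally, you supply an explicit proof of the last assertion (the condition holds for all one-dimensional $V$ when $X=Y$, via $d_0d_1d_\infty=1$ and $\langle d_0,d_1\rangle=H$), which the paper's proof does not spell out at all; your argument for it is correct, modulo the harmless normalization that the chosen inertia generators $d_{j_X}$ may be replaced by the images of $x$, $y$, $(xy)^{-1}$ since character values on cyclic inertia subgroups are insensitive to conjugation and to the choice of generator.
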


\begin{proof}
It follows from Theorem \ref{thm:restriction} and Remark \ref{rem:ell} that $\mathbb{Q}_\ell \otimes_{\mathbb{Z}_\ell} T_{\ell,\lambda}(X)$ is isomorphic to the coinvariants $(\mathbb{Q}_\ell \otimes_{\mathbb{Z}_\ell} T_{\ell,\widetilde{\lambda}}(Y))_{\mathcal{J}}$.  By Gasch\"utz's theorem (see \cite[\S2]{Gruenberg}), $\mathbb{Q}_\ell \otimes_{\mathbb{Z}_\ell} T_{\ell,\widetilde{\lambda}}(Y)$ is isomorphic to $\mathbb{Q}_\ell \oplus \mathbb{Q}_\ell[H]$ as a $\mathbb{Q}_\ell[H]$-module.  Therefore, $\mathbb{Q}_\ell \otimes_{\mathbb{Z}_\ell} T_{\ell,\lambda}(X)$ is isomorphic to $\mathbb{Q}_\ell \oplus \mathbb{Q}_\ell[\mathcal{D}]^{[H:\mathcal{W}]}$ as a $\mathbb{Q}_\ell[\mathcal{D}]$-module.  

Let $V$ be an arbitrary irreducible representation of $\mathcal{D}$ over $\overline{\mathbb{Q}}$.  Then $m_V$ is the multiplicity of $V$ as a direct summand of $\overline{\mathbb{Q}}\oplus \overline{\mathbb{Q}}[\mathcal{D}]^{[H:\mathcal{W}]}$.  By Frobenius reciprocity and (\ref{eq:leftterm2}), $n_V$ is the multiplicity of $\overline{\mathbb{Q}}_\ell\otimes_{\overline{\mathbb{Q}}}V$ as a direct summand of the tensor product of the left term of (\ref{eq:genjac2}) with $\overline{\mathbb{Q}}_\ell$ over $\mathbb{Z}_\ell$.  Therefore, Theorem \ref{thm:preserve2} is equivalent to the statement that the action of $A_1$ respects the sequence $(\ref{eq:genjac2})$ if 
\begin{equation}
\label{eq:oy2}
n_V\in\{0,m_V\}\quad\mbox{for every irreducible  representation $V$ of $\mathcal{D}$ over $\overline{\mathbb{Q}}$},
\end{equation}
and if $X = Y$ then (\ref{eq:oy2}) is also necessary for $A_1$ to respect the sequence $(\ref{eq:genjac2})$.

Suppose first that the action of $A_1$ does not respect the sequence $(\ref{eq:genjac2})$. Since the terms of $(\ref{eq:genjac2})$ are free $\mathbb{Z}_\ell$-modules, the action of $A_1$ does not respect the terms of the sequence obtained by tensoring $(\ref{eq:genjac2})$ with $\overline{\mathbb{Q}}_\ell$ over $\mathbb{Z}_\ell$. Hence, we obtain a non-zero $\overline{\mathbb{Q}}_\ell[\mathcal{D}]$-module homomorphism from the tensor product with $\overline{\mathbb{Q}}_\ell$ over $\mathbb{Z}_\ell$ of the left term of (\ref{eq:genjac2}) to $\overline{\mathbb{Q}}_\ell\otimes_{\mathbb{Z}_\ell}T_\ell(X)$. Since the irreducible representations of $\mathcal{D}$ over $\overline{\mathbb{Q}}_\ell$ are all defined over $\overline{\mathbb{Q}}$, this means that there exists an irreducible representation $V$ of $\mathcal{D}$ over $\overline{\mathbb{Q}}$ such that $0 < n_V < m_V$, so (\ref{eq:oy2}) does not hold. 

Now suppose that $X = Y$, so that $\mathcal{J}$ is the trivial subgroup of $H$, and hence $\mathcal{W} = H$ and $\mathcal{D}=H$.  We suppose that the action of $A_1$ respects the terms of the sequence $(\ref{eq:genjac2})$, and we must show that (\ref{eq:oy2}) holds.  The action of $A_1$ preserves the terms of the sequence that results from tensoring $(\ref{eq:genjac2})$ with $\overline{\mathbb{Q}}_\ell$ over $\mathbb{Z}_\ell$. Since $T_{\ell,\lambda}(X)=T_{\ell,\widetilde{\lambda}}(Y)$ is the maximal pro-$\ell$ quotient of $\overline{R}=T_{\widetilde{\lambda}}(Y)$, we can write $T_{\ell,\lambda}(X)=R/\widetilde{I}_\ell$ for a characteristic subgroup $\widetilde{I}_\ell$ of $R$. By Theorem \ref{thm:main}, it follows that the image of the action of $A_X(\pi)=A(\pi,\widetilde{I}_\ell)$ on $T_{\ell,\lambda}(X)=R/\widetilde{I}_\ell$ consists of all $H$-equivariant automorphisms of  $T_{\ell,\lambda}(X)$ that preserve a certain extension class.  Since this extension class is annihilated by $\ell^a$ for a sufficiently large integer $a > 0$, it follows that if $f:T_{\ell,\lambda}(X) \to T_{\ell,\lambda}(X)$ is any $\mathbb{Z}_\ell[H]$-module endomorphism of $T_{\ell,\lambda}(X)$ and $\mathrm{id}_{T_{\ell,\lambda}(X)}$ is the identity automorphism of $T_{\ell,\lambda}(X)$, then $\mathrm{id}_{T_{\ell,\lambda}(X)} + \ell^a f$ is induced by the action of an element of $A_X(\pi)$. The kernel $A_2$ of the left multiplication action of $A_X(\pi)$ on the finitely many left cosets of $A_1$ in $A_X(\pi)$ is a finite index normal subgroup of $A_X(\pi)$ contained in $A_1$.  Let $n = [A_X(\pi):A_2]$.  Then for all sufficiently large $a > 0$, the binomial theorem shows that there is an endomorphism $f_1$ of $T_{\ell,\lambda}(X)$ with the following property. The map 
$$ \frac{1}{\ell^a} \left ( \left (\mathrm{id}_{T_{\ell,\lambda}(X)} + \ell^a f \right)^n - \mathrm{id}_{T_{\ell,\lambda}(X)} \right ) = n\,f + \ell^a f_1$$
is an endomorphism of $T_{\ell,\lambda}(X)$ lying in the $\mathbb{Q}_\ell$-subalgebra $B$ of $\mathrm{End}_{\mathbb{Q}_\ell[H]}(\mathbb{Q}_\ell \otimes_{\mathbb{Z}_\ell} T_{\ell,\lambda}(X))$ generated by automorphisms arising from the action of $A_2 \subset A_1$ on $T_{\ell,\lambda}(X)$.  Taking the limit as $a \to \infty$ and using the fact that $B$ is a closed $\mathbb{Q}_\ell$-subspace of a finite dimensional $\mathbb{Q}_\ell$-vector space, we see that $B$ is all of $\mathrm{End}_{\mathbb{Q}_\ell[H]}(\mathbb{Q}_\ell \otimes_{\mathbb{Z}_\ell} T_{\ell,\lambda}(X))$.  Therefore, every element of $\mathrm{End}_{\mathbb{Q}_\ell[H]}(\mathbb{Q}_\ell \otimes_{\mathbb{Z}_\ell} T_{\ell,\lambda}(X))$ preserves the tensor product of the left term of (\ref{eq:genjac2}) with $\mathbb{Q}_\ell$ over $\mathbb{Z}_\ell$.   It follows that  every element of $\mathrm{End}_{\overline{\mathbb{Q}}_\ell[H]}(\overline{\mathbb{Q}}_\ell \otimes_{\mathbb{Z}_\ell} T_{\ell,\lambda}(X))$ preserves the tensor product of the left term of (\ref{eq:genjac2}) with $\overline{\mathbb{Q}}_\ell$ over $\mathbb{Z}_\ell$. But this implies the statement (\ref{eq:oy2}) since the irreducible representations of $H$ over $\overline{\mathbb{Q}}_\ell$ are all defined over $\overline{\mathbb{Q}}$.
\end{proof}

\begin{remark}
\label{rem:IDunno} 
Let $\mathcal{D}_1$ be a subgroup of $\mathcal{D}$. Suppose $V$ is an irreducible representation of $\mathcal{D}$ over $\overline{\mathbb{Q}}$ for which the sufficient criterion described in Theorem \ref{thm:preserve2} does not hold.  Let $V_1$ be any irreducible representation of $\mathcal{D}_1$ over $\overline{\mathbb{Q}}$ that occurs in the restriction of $V$ to $\mathcal{D}_1$.  Then $\overline{\mathbb{Q}}_\ell\otimes_{\overline{\mathbb{Q}}}V_1$ occurs in both of the restrictions to $\mathcal{D}_1$  of the $\overline{\mathbb{Q}}_\ell[\mathcal{D}]$-modules that result from tensoring either the left or right term of (\ref{eq:genjac2}) with $\overline{\mathbb{Q}}_\ell$ over $\mathbb{Z}_\ell$.  This implies that if for some subgroup $\mathcal{D}_1$ of $\mathcal{D}$ the counterpart of the criterion in Theorem \ref{thm:preserve2} holds for all irreducible representations of $\mathcal{D}_1$ over $\overline{\mathbb{Q}}$, then this criterion holds for $\mathcal{D}$.  This forces the action of every finite index subgroup $A_1$ of $A_X(\pi)$ to descend to $T_\ell(X)$.
\end{remark}

We now give an example in which  $X=Y$ and the action of no finite index subgroup $A_1$ of $A_X(\pi)=A(\pi)$ descends to an action on $T_\ell(X)$.

\begin{example}
\label{ex:nopreserve}
Suppose $X = Y$ and that $\mathcal{D}=H$ is the alternating group $A_5$ of order $60$ on the letters $\{a_1,a_2,a_3,a_4,a_5\}$.  Recall that $d_{0_X}$ and $d_{1_X}$ are generators of inertia groups of points of $X=Y$ over $0$ and $1$. It follows that they generate $\mathcal{D}=H$ and that $d_{\infty_X} = (d_{0_X} d_{1_X})^{-1}$.  Suppose $d_{0_X}$ is the three-cycle $(a_1,a_2,a_3)$ in $\mathcal{D}$ and $d_{1_X}$ is the  five-cycle $(a_1,a_2, a_3,a_4,a_5)$, so that $d_{\infty_X} = (d_{0_X} d_{1_X})^{-1}$ is a five-cycle.    Define $A_4$ to be the alternating group of order $12$ on the letters $\{a_1,a_2,a_3,a_4\}$. The induction $\mathbb{Q}[A_5/A_4]$ of the trivial representation of $A_4$ to $A_5$ is then isomorphic to the direct sum of one copy of the trivial representation $\mathbb{Q}$ of $A_5$ with a four-dimensional absolutely irreducible representation $V$ of $A_5$.  It is easy to check that the invariants $V^{\langle d_{j_X}\rangle}$ are trivial for $j = 1, \infty$. On the other hand, the three-cycle $d_{0_X}$ has eigenvalues $1, 1, \zeta_3$ and $\zeta_3^2$ on $V$, so $\mathrm{dim}_{\mathbb{Q}}(V^{\langle d_{0_X}\rangle}) = 2$. It follows by Theorem \ref{thm:preserve2} that no finite index subgroup $A_1$ of $A_X(\pi) = A(\pi)$ respects the terms of the sequence $(\ref{eq:genjac2})$. Therefore,  while the Galois action of some finite index subgroup $G_F$ of $G_{\mathbb{Q}}$ on the $\ell$-adic Tate module $T_{\ell,\lambda}(X)$ can be extended to an action of $A(\pi)$,  the same cannot be said for the  $\ell$-adic Tate module $T_\ell(X)$.
\end{example}

We next give an example to show that the sufficient condition in Theorem \ref{thm:preserve2} is not in general necessary for the action of $A_X(\pi)$ on $T_{\ell,\lambda}(X)$ to descend to an action on $T_\ell(X)$.  

\begin{example}
\label{ex:badluck} 
Let $E$ be the elliptic curve with affine equation $y^3 = t (t-1)$, and let $\kappa: E \to \mathbb{P}^1_{\overline{\mathbb{Q}}}$ be the Bely\u{\i} cover defined by $(t,y) \mapsto t$ in affine coordinates.  Then $\kappa$ is a Galois cover and $\mathrm{Gal}(E/\mathbb{P}^1_{\overline{\mathbb{Q}}})=\mu_3$ is a cyclic group of order $3$, corresponding to the action of cube roots of unity on $E$ via the complex multiplication of $E$.  Suppose $p \ne 3 $ is a prime and that $\mathcal{J}$ is a cyclic subgroup of the $p$-torsion $E(\overline{\mathbb{Q}})[p]$  that is not stable under the action of $\mathrm{Gal}(E/\mathbb{P}^1_{\overline{\mathbb{Q}}})$.  Define $Y = E$ and let $\widetilde{\lambda}:Y \to \mathbb{P}^1_{\overline{\mathbb{Q}}}$ be the Bely\u{\i} Galois cover that is the composition of multiplication by $p$ with $\kappa$.  Define $X$ to be the quotient $Y/\mathcal{J}$.  Then there is an isogeny $\delta:X \to E$ such that the multiplication by $p$ map $Y \to E$ is the composition of the quotient morphism $Y \to X$ with $\delta$.  The composition of $\delta$ with $\kappa$ is a Bely\u{\i} cover $\lambda:X \to \mathbb{P}^1_{\overline{\mathbb{Q}}}$.  The Galois group $H = \mathrm{Gal}(Y/\mathbb{P}^1_{\overline{\mathbb{Q}}})$ is the semi-direct product $E(\overline{\mathbb{Q}})[p] \rtimes \mu_3$, and the normalizer of $\mathcal{J} = \mathrm{Gal}(Y/X) $ in $H$ is $\mathcal{W} = E(\overline{\mathbb{Q}})[p]$.  Thus $\mathcal{D} = \mathcal{W}/\mathcal{J} = \mathrm{Gal}(X/E)$ is the cyclic order $p$ group associated to the isogeny $\delta$. Each of the points $0, 1, \infty \in \mathbb{P}^1_{\overline{\mathbb{Q}}}$ are totally ramified with respect to $\kappa:E \to \mathbb{P}^1_{\overline{\mathbb{Q}}}$, and the points over them are split under $\delta:X \to E$.  Thus if $V$ is the trivial one-dimensional representation of $\mathcal{D}$, then
$$n_V =  -\, \mathrm{dim}_{\overline{\mathbb{Q}}} (V^{\mathcal{D}})+ \sum_{j \in \kappa^{-1}(\{0,1,\infty\})} \mathrm{dim}_{\overline{\mathbb{Q}}}(V^{\langle d_{j_X}\rangle}) = -1 + 3 = 2  $$
and
$$m_V = \mathrm{dim}_{\overline{\mathbb{Q}}}(V^{\mathcal{D}})  + [H:\mathcal{W}]\cdot \mathrm{dim}_{\overline{\mathbb{Q}}}(V) = 1 + 3 = 4.$$
Hence it is not true that $n_V$ is either $0$ or $m_V$, and the condition in Theorem 5.1 does not hold.  Let us now check that nevertheless the action of $A_X(\pi)$ descends to $T_\ell(X)$, i.e. that this action preserves the terms of (\ref{eq:genjac2}).  For this we use that the $p$-isogeny $\delta:X \to E$ is induced by the trace element of the group ring $\mathbb{Z}_\ell[\mathcal{D}]$ and induces a commutative diagram
\begin{equation}
\label{eq:bigarray}
\xymatrix{
0 \ar[r] & \displaystyle\frac{\prod_{x \in \lambda^{-1}(\{0,1,\infty\})} \mathbb{Z}_\ell(1)}{\mathrm{diag}(\mathbb{Z}_\ell(1))} \ar[d] \ar[r] & T_{\ell,\lambda}(X) \ar[d] \ar[r] & T_\ell(X) \ar[d] \ar[r] & 0\;\\
0 \ar[r] & \displaystyle\frac{\prod_{x \in \kappa^{-1}(\{0,1,\infty\})} \mathbb{Z}_\ell(1)}{\mathrm{diag}(\mathbb{Z}_\ell(1))} \ar[r] & T_{\ell,\kappa}(E) \ar[r] &T_\ell(E) \ar[r] & 0.
}
\end{equation}
Since the actions of $A_X(\pi)$ and $\mathcal{D}$ on $T_{\ell,\lambda}(X)$ commute, the middle vertical arrow in (\ref{eq:bigarray}) is equivariant for the action of $A_X(\pi)$.   Suppose that the terms of the top row of (\ref{eq:bigarray}) are not stable under the action of $A_X(\pi)$.  Then there exists $\alpha \in A_X(\pi)$ and there exists $c \in T_{\ell,\lambda}(X)$ with non-zero image in $T_\ell(X)$ such that $\alpha \cdot c$ has trivial image in $T_\ell(X)$. The right vertical arrow in (\ref{eq:bigarray}) is injective, since the elements of $\mathcal{D}$ act by translations on the elliptic curve $X$ and thus they act trivially on the torsion free module $T_{\ell}(X)$.  Therefore, the image $c'$ of $c$ under the middle vertical arrow in (\ref{eq:bigarray}) is an  element of $T_{\ell,\kappa}(E) $ with non-zero image in $T_\ell(E)$ such that $\alpha\cdot c'$ has trivial image in $T_\ell(E)$. This means that the terms of the bottom row of (\ref{eq:bigarray}) are not respected by the action of $A_X(\pi)$.  However, we can now apply Theorem \ref{thm:preserve2} to the cyclic morphism $\kappa:E \to \mathbb{P}^1_{\overline{\mathbb{Q}}}$ with Galois group $\mu_3$.  Since $\kappa$ is Galois and all the irreducible characters of $\mu_3$ over $\overline{\mathbb{Q}}$ have dimension $1$, the condition in Theorem \ref{thm:preserve2} is automatically satisfied. Hence the action of $A_X(\pi)$ respects the terms in the bottom row of (\ref{eq:bigarray}), which is a contradiction. In other words, the action of $A_X(\pi)$ on $T_{\ell,\lambda}(X)$ descends to an action on $T_\ell(X)$.
\end{example}
 
We end this section by proving Theorem \ref{thm:irred} of the introduction.  

\begin{proof}[Proof of Theorem \ref{thm:irred}.]
As in the statement of the theorem, suppose $t> 1$ is an odd integer. Let $\zeta$ be a primitive $t^{\mathrm{th}}$ root of unity, and let $E=E_\zeta$ be the elliptic curve with affine equation
$$y^2 = x (x-1) (x-\zeta).$$
Let $\lambda:E \to \mathbb{P}^1_{\overline{\mathbb{Q}}}$ be the Bely\u{\i} cover defined by $(x,y)\mapsto x^t$ in affine coordinates. Then $\lambda$ factors as $\lambda = \kappa\circ\delta$ where $\delta:E \to \mathbb{P}^1_{\overline{\mathbb{Q}}}$ is given by $(x,y)\mapsto x$, and $\kappa: \mathbb{P}^1_{\overline{\mathbb{Q}}}\to \mathbb{P}^1_{\overline{\mathbb{Q}}}$ is given by $x\mapsto x^t$, both in affine coordinates. 
Over $\overline{\mathbb{Q}}$, $\kappa$ is a cyclic cover of order $t$.  Since $\delta:E\to \mathbb{P}^1_{\overline{\mathbb{Q}}}$ is quadratic, the Galois group $H$ of the Galois closure $\widetilde{\lambda}:Y \to \mathbb{P}^1_{\overline{\mathbb{Q}}}$ of $\lambda$  is a semi-direct product of a cyclic group $\mathbb{Z}/t$ of order $t$ with a normal elementary abelian 2-group $G$.  The Galois group $\mathcal{J}$ of the natural morphism $z:Y \to E$ is an index two subgroup of $G$.  We can identify $G$ with the Galois group of the morphism $\delta \circ z:Y \to \mathbb{P}^1_{\overline{\mathbb{Q}}}$, where $\widetilde{\lambda} = \kappa \circ \delta \circ z$.  Note that $G$ is contained in the normalizer $\mathcal{W}$ of $\mathcal{J}$ in $H$. As before, we let $\pi: \widehat{F}_2 \to H$ be the surjective homomorphism associated to $\widetilde{\lambda}$. 

We now apply Theorem \ref{thm:restriction} and Remark \ref{rem:ell} to $X=E$.  We obtain a finite index subgroup $A_E(\pi)$ of $A(\pi)$ so that $A_E(\pi)$ acts on the $\ell$-adic Tate module $T_{\ell,\lambda}(E)$. Let $F_1\supset \mathbb{Q}(\zeta)$ be a number field over which $E=E_\zeta$, $Y$, $J_{\lambda}(E)$, $J_{\widetilde{\lambda}}(Y)$ and the action of $H$ on $Y$ are defined. Then there exists a finite index normal subgroup $A_E$ of $A_E(\pi)$ such that the action of $G_{F_1} \cap \iota^{-1}(A_E)$ on $T_{\ell,\lambda}(E)$ agrees with the action of $A_E$ under  Bely\u{\i}'s embedding $G_{F_1} \xrightarrow{\iota} \mathrm{Aut}(\widehat{F}_2)$. Since $G_{F_1} \cap \iota^{-1}(A_E)$ is a finite index subgroup of $G_{F_1}$, there exists a number field $F$ containing $F_1$ such that $G_F=G_{F_1} \cap \iota^{-1}(A_E)$. In other words, $A_E$ contains the image of $G_F$ under $\iota$.

We next use the criterion of Theorem \ref{thm:preserve2} and Remark \ref{rem:IDunno} to show that the action of $A_E(\pi)$, and hence the action of $A_E$, on $T_{\ell,\lambda}(E)$ descends to an action on $T_\ell(E)$.  Let $\mathcal{D}_1$ be the order two subgroup $G/\mathcal{J}$ of $\mathcal{D} = \mathcal{W}/\mathcal{J}$.  Let $V_1$ be an irreducible representation of $\mathcal{D}_1$ over $\overline{\mathbb{Q}}$. Then the character of $V_1 $ is either the trivial character or the order two character of $\mathcal{D}_1$.  By Remark \ref{rem:IDunno}, it suffices to show that $\overline{\mathbb{Q}}_\ell \otimes_{\overline{\mathbb{Q}}} V_1$ occurs in at most one of the restrictions to $\mathcal{D}_1$ of the $\overline{\mathbb{Q}}_\ell[\mathcal{D}]$-modules that result from tensoring either the left or right term of (\ref{eq:genjac2}) with $\overline{\mathbb{Q}}_\ell$ over $\mathbb{Z}_\ell$.   Here the action of $\mathcal{D}_1$ on these terms results from the elliptic involution associated to $\delta:E \to \mathbb{P}^1_{\overline{\mathbb{Q}}}$.  This involution acts trivially on each of the points in $\lambda^{-1}(\{0,1,\infty\}) \subset E$, so it acts trivially on the left term of (\ref{eq:genjac2}).  On the other hand, the elliptic involution $(x,y) \mapsto (x,-y)$ on $E$ acts as multiplication by $-1$ on $T_\ell(E)$.  Therefore,  each $V_1$ can occur in only one of the above restrictions, which implies that $A_E(\pi)$ acts on $T_\ell(E)$.

Finally, suppose that the value $\phi(t)$ of Euler's phi function on $t$ is larger than $24$. Let $\tau_\ell: A_E \to \mathrm{GL}(\mathbb{Q}_\ell \otimes_{\mathbb{Z}_\ell} T_\ell(E))$ be the representation of $A_E$ over $\mathbb{Q}_\ell$ that is associated to the action of $A_E$ on $T_\ell(E)$. It remains to check that the restriction of $\tau_\ell$ to any subgroup between $A_E$ and $\iota(G_F)$ is absolutely irreducible and non-abelian. For this it will suffice to show these properties for the representation of $G_F$ over $\mathbb{Q}_\ell$ that is associated to the action of $G_F$ on $T_\ell(E)$. By a result of Serre  (see \cite[Chapter IV]{SerreElliptic1968}), this will be true as long as $E$ does not have complex multiplication. Using the formula for the $j$-invariant of $E=E_\zeta$ in \cite[Prop. III.1.7]{Silverman}, we see that 
$$j(E)=256\,\frac{(\zeta^2-\zeta+1)^3}{\zeta^2 (\zeta-1)^2}.$$
In particular, $j(E)$ lies in $\mathbb{Q}(\zeta)\subset F$. If $E$ has complex multiplication then $j(E)$ lies in the Hilbert class field of an imaginary quadratic field, which is a dihedral extension of $\mathbb{Q}$. However, the maximal abelian quotient of such a dihedral group has order 4.  Therefore, we only need to ensure that $j(E)$ generates an extension of $\mathbb{Q}$ of degree larger than 4. However, this is the case provided the minimal polynomial of $\zeta$ has degree greater than 24, which happens if and only if $\phi(t)>24$. This completes the proof of Theorem \ref{thm:irred}.
\end{proof}

\section{The case of elliptic curves that are Galois covers of $\mathbb{P}^1_{\overline{\mathbb{Q}}} - \{0,1,\infty\}$.}
\label{s:examples}

Suppose $X$ is a Galois $H$-cover of $\mathbb{P}^1_{\overline{\mathbb{Q}}}$ that is unramified outside $\{0, 1, \infty\}$ and associated to a surjection $\pi:\widehat{F}_2\to H$. Theorem \ref{thm:preserve2} shows that the action of a finite index subgroup of $G_{\mathbb{Q}}$ on the adelic Tate module of the Jacobian of $X$ cannot, in general, be extended to an action of the finite index subgroup $A(\pi)$ of $\mathrm{Aut}(\widehat{F}_2)$.  In this section, we will show that such an extension does always exist when $X$ has genus $1$, and that all such $X$ must be CM elliptic curves.  We will also show that when $\iota: G_{\mathbb{Q}} \to \mathrm{Aut}(\widehat{F}_2)$ is the Bely\u{\i} embedding, the action of $\iota(G_{\mathbb{Q}}) \cap A(\pi)$ on the adelic Tate module of the generalized Jacobian of $X$ has infinite index in the group of automorphisms generated by $A(\pi)$, while the corresponding index is finite if we replace the generalized Jacobian by the Jacobian of $X$.

Throughout this section we will make the following hypothesis:

\begin{hypo}
\label{hyp:nice}  
The morphism $\lambda: X\to \mathbb{P}^1_{\overline{\mathbb{Q}}}$ is an $H$-cover unramified outside $ \{0,1,\infty\}$, associated to a surjection $\pi:\widehat{F}_2 \to H$ and an embedding of $\overline{\mathbb{Q}}(X)$ into an algebraic closure of $\overline{\mathbb{Q}}(t) = \overline{\mathbb{Q}}(\mathbb{P}^1_{\overline{\mathbb{Q}}})$.  The genus of $X$ is $1$.  There are points $0_X$, $1_X$ and $\infty_X$ of $X$ over $0$, $1$ and $\infty$ on $\mathbb{P}^1_{\overline{\mathbb{Q}}}$ such that the corresponding inertia groups $I_{0_X}$, $I_{1_X}$ and $I_{\infty_X}$ satisfy $| I_{0_X} | \ge | I_{1_X} | \ge | I_{\infty_X}|$.
\end{hypo}

Note that we can always arrange for the last property  to hold by composing $\lambda$ with an automorphism of $\mathbb{P}^1_{\overline{\mathbb{Q}}}$ which permutes $\{0,1,\infty\}$. In the notation of the previous two sections, we are assuming that $X=Y$ is its own Galois closure over $\mathbb{P}^1_{\overline{\mathbb{Q}}}$ and that $\lambda=\widetilde{\lambda}$, which implies that $A_X(\pi)=A(\pi)$.

Here are some examples which are easily checked using Hurwitz's theorem (see \cite[\S IV.2]{Hartshorne1977}).

\begin{example}
\label{ex:complex}  
Let $d \in \{3, 6, 4\}$.  Suppose $(a,b,d)$ is a triple of integers in the set $$\{(1,1,3), (2,2,3), (1,2,6), (5,4,6), (1,1,4),(3,3,4)\}.$$
Let $X$ be the curve with affine equation $y^d = t^a(t-1)^b$ and let $\lambda_{a,b,d}:X \to \mathbb{P}^1_{\overline{\mathbb{Q}}}$ be the map sending $(y,t)$ to $t$.  Fix a root of unity $\zeta_d$ of order $d$ in $\overline{\mathbb{Q}}$.   Then Hypothesis (\ref{hyp:nice}) holds with $\lambda = \lambda_{a,b,d}$ and the generator $1$ of $H_{a,b,d} = \mathbb{Z}/d$ sending $y$ to $\zeta_d\, y$.  Note that the isomorphism class of $X$ as an $H_{a,b,d}$-cover of $\mathbb{P}^1_{\overline{\mathbb{Q}}}$ depends on the  choice of $\zeta_d$.  There is a unique point $0_X$ over $0$ on $X$, and $I_{0_X} = H_{a,b,d}$. The ordered pair $(| I_{1_X}|, | I_{\infty_X}|)$  equals $(3,3)$ if $d = 3$, $(3,2)$ if $d = 6$ and $(4,2)$ if $d = 4$. The elliptic curve $X$ with origin $0_X$ is isomorphic to an elliptic curve over $\overline{\mathbb{Q}}$ with complex multiplication by the ring of integers $\mathbb{Z}[\zeta_d]$ of $\mathbb{Q}(\zeta_d)$, and all such elliptic curves are isomorphic.
\end{example}

\begin{proposition}
\label{prop:ellconstruct}  
Let $X$ be as in Hypothesis \ref{hyp:nice}, so that $X$ is an elliptic curve with origin $0_X$.   Then $\lambda$ can be factored as a finite \'etale isogeny $\lambda_1:X \to X$ followed by one of the morphisms $\lambda_{a,b,d}:X \to \mathbb{P}^1_{\overline{\mathbb{Q}}}$ defined in Example \ref{ex:complex}.  The group $H$ is the semi-direct product of $H_{a,b,d} = \mathbb{Z}/d$ with the kernel $\mathcal{J}$ of $\lambda_1$.  We can identify $\mathcal{J}$ with an $H_{a,b,d}$-stable submodule of the torsion points of $X$ as an elliptic curve.  Conversely, any such finite $H_{a,b,d}$-stable subgroup of torsion points can be taken to be the kernel of a $\lambda_1$ of the above kind, leading to a cover $\lambda:X \to \mathbb{P}^1_{\overline{\mathbb{Q}}}$ as in Hypothesis \ref{hyp:nice}.  The action of $H_{a,b,d}$ on torsion points agrees with the action of $\langle\zeta_d\rangle$ when we identify the Kummer action of the generator $1$ of $H_{a,b,d}$ with complex multiplication by $\zeta_d$.  
\end{proposition}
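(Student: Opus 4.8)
The plan is to read off the ramification of $\lambda$ from Riemann--Hurwitz, identify $X$ as a CM elliptic curve with $I_{0_X}$ its group of complex multiplications, and then split off the translation subgroup of $H$ as the kernel of $\lambda_1$. First I would apply the Hurwitz formula (\cite[\S IV.2]{Hartshorne1977}) to the Galois $H$-cover $\lambda:X\to\mathbb{P}^1_{\overline{\mathbb{Q}}}$ of genus-$1$ curves branched only over $\{0,1,\infty\}$. Writing $e_0=|I_{0_X}|\ge e_1=|I_{1_X}|\ge e_\infty=|I_{\infty_X}|$ for the (cyclic, since tame) inertia orders, genus $1$ gives $\tfrac{1}{e_0}+\tfrac{1}{e_1}+\tfrac{1}{e_\infty}=1$. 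This forces every $e_i\ge 2$, and the only decreasing solutions are $(3,3,3)$, $(4,4,2)$ and $(6,3,2)$; in each case $e_1,e_\infty\mid e_0$, and $d:=e_0\in\{3,4,6\}$ is exactly the value appearing in Example~\ref{ex:complex}.

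Next I would use that $I_{0_X}$ fixes $0_X$, so it is a cyclic order-$d$ subgroup of $\mathrm{Aut}(X,0_X)$. As this automorphism group is $\mu_2$, $\mu_4$ or $\mu_6$ according as $j(X)\notin\{0,1728\}$, $j(X)=1728$ or $j(X)=0$, the presence of an element of order $d$ forces $j(X)=1728$ for $d=4$ and $j(X)=0$ for $d\in\{3,6\}$. In every case $X$ has complex multiplication by $\mathbb{Z}[\zeta_d]$, which is the maximal order $\mathrm{End}(X)$ since it is the ring of integers of $\mathbb{Q}(\zeta_d)$, and $I_{0_X}=\langle\zeta_d\rangle$ acts through these complex multiplications; this is the content of the proposition's last sentence once the Kummer normalization is fixed in the final step, and together with Example~\ref{ex:complex} it shows all such $X$ are CM.

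The heart of the argument is the semidirect product decomposition. Using the canonical splitting $\mathrm{Aut}(X)=X(\overline{\mathbb{Q}})\rtimes\mathrm{Aut}(X,0_X)$, I would set $\mathcal{J}=H\cap X(\overline{\mathbb{Q}})$, the translations lying in $H$: this is normal in $H$, consists of torsion points, and is stable under $\zeta_d$ because conjugating a translation $t_P$ by $\zeta_d$ gives $t_{\zeta_d P}$; hence $\mathcal{J}$ is a $\mathbb{Z}[\zeta_d]$-submodule of the torsion of $X$. Since a translation fixing $0_X$ is trivial, $\mathcal{J}\cap I_{0_X}=1$, and $H/\mathcal{J}$ embeds in the cyclic group $\mathrm{Aut}(X,0_X)$ while containing the order-$d$ image of $I_{0_X}$. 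The key point is to show $[H:\mathcal{J}]=d$, i.e. that $H/\mathcal{J}$ is no larger than $I_{0_X}$: here $Z:=X/\mathcal{J}$ is an elliptic curve, $X\to Z$ is étale because $\mathcal{J}$ acts freely by translations, so $Z\to\mathbb{P}^1_{\overline{\mathbb{Q}}}$ is a \emph{connected} cyclic cover of degree $[H:\mathcal{J}]$ with the same inertia orders $(e_0,e_1,e_\infty)$ as $\lambda$ (an étale morphism alters no inertia). Because the inertia subgroups generate the Galois group of a connected cover and all of them lie in the unique subgroup of order $e_0=d$ (as $e_1,e_\infty\mid d$), the degree is exactly $d$. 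Thus $H=\mathcal{J}\rtimes I_{0_X}$ with $I_{0_X}=H_{a,b,d}=\mathbb{Z}/d$.

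Finally I would assemble the factorization and the converse. Because $\mathbb{Q}(\zeta_d)$ has class number one for $d\in\{3,4,6\}$ and $\mathcal{J}$ is a $\mathbb{Z}[\zeta_d]$-module, $Z=X/\mathcal{J}$ again has CM by the PID $\mathbb{Z}[\zeta_d]$ and is therefore isomorphic to $X$; fixing such an isomorphism turns the quotient isogeny into $\lambda_1:X\to X$ with kernel $\mathcal{J}$, while $Z\to Z/\langle\zeta_d\rangle=\mathbb{P}^1_{\overline{\mathbb{Q}}}$ is a degree-$d$ cyclic cover branched at $\{0,1,\infty\}$, totally ramified over the image of $0_X$, with inertia pattern $(d,e_1,e_\infty)$. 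By Kummer theory such a cover is $y^d=t^a(t-1)^b$, and the genus-$1$ condition together with the ordering $|I_{0_X}|\ge|I_{1_X}|\ge|I_{\infty_X}|$ restricts $(a,b)$ to precisely the pairs listed in Example~\ref{ex:complex}; the two triples per $d$ correspond to the choice of $\zeta_d$, i.e. to identifying the generator $1\in H_{a,b,d}$ with $y\mapsto\zeta_d y$. For the converse, given a finite $\langle\zeta_d\rangle$-stable subgroup $\mathcal{J}'$ of the torsion of $X$, the quotient $X\to X/\mathcal{J}'\cong X$ is an étale isogeny $\lambda_1$, and $\langle\zeta_d\rangle$-stability makes $\lambda=\lambda_{a,b,d}\circ\lambda_1$ Galois with group $\mathcal{J}'\rtimes\langle\zeta_d\rangle$, branched over $\{0,1,\infty\}$ with the inertia orders of $\lambda_{a,b,d}$, so Hypothesis~\ref{hyp:nice} holds. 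I expect the main obstacle to be the order computation $[H:\mathcal{J}]=d$: ruling out a strictly larger $H/\mathcal{J}$ is genuinely necessary (when $d=3$ one has $\mathrm{Aut}(X,0_X)=\mu_6\supsetneq\mu_3$), and it is exactly the connectedness and inertia-generation argument on $Z\to\mathbb{P}^1_{\overline{\mathbb{Q}}}$ that forces equality; the remaining ingredients are standard.
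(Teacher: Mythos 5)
Your proof is correct, and while it shares the paper's overall skeleton --- Riemann--Hurwitz giving $\sum_j 1/e_j=1$ and the triples $(3,3,3)$, $(4,4,2)$, $(6,3,2)$; the identification of $X$ as a CM curve with $I_{0_X}$ acting through $\langle\zeta_d\rangle$; splitting off the translation subgroup; $X/\mathcal{J}\cong X$ via class number one; and Kummer theory to pin down $\lambda_{a,b,d}$ --- it handles the pivotal step, namely $H=\mathcal{J}\rtimes I_{0_X}$, by a genuinely different argument. The paper works element by element: writing $h=t\circ\iota$ with $t$ a translation and $\iota$ fixing $0_X$, it notes that for $d\in\{4,6\}$ every unit of $\mathrm{End}(X)$ already lies in $I_{0_X}$, while for $d=3$ it must exclude elements of the form $P\mapsto -P+\tau$; it does so by observing that such an element has order $2$ and fixes the points $\tau'$ with $\tau'+\tau'=\tau$, which would produce inertia groups of even order, contradicting the $(3,3,3)$ ramification pattern. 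You instead pass to the quotient $Z=X/\mathcal{J}$, observe that $H/\mathcal{J}$ is cyclic (it embeds in $\mathrm{Aut}(X,0_X)$) and that $X\to Z$ is \'etale, so the inertia orders of $Z\to\mathbb{P}^1_{\overline{\mathbb{Q}}}$ are still $(e_0,e_1,e_\infty)$ with $e_1,e_\infty\mid d$, and then invoke the fact that inertia subgroups generate the Galois group of a connected cover of the algebraically simply connected $\mathbb{P}^1_{\overline{\mathbb{Q}}}$: since all inertia lies in the unique subgroup of order $d$ of the cyclic group $H/\mathcal{J}$, the index $[H:\mathcal{J}]$ is exactly $d$. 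Both arguments are sound; the paper's is more hands-on and requires the case split on $d$, with the fixed-point trick doing the real work at $d=3$ (exactly the case you flag as the obstacle), whereas yours is uniform in $d$ and replaces that computation with a standard covering-space principle, which is arguably more robust. A minor bonus of your write-up is that the converse direction (that $\zeta_d$-stability of $\mathcal{J}'$ makes $\lambda_{a,b,d}\circ\lambda_1$ Galois with group $\mathcal{J}'\rtimes\langle\zeta_d\rangle$, ramified only over $\{0,1,\infty\}$) is spelled out, whereas the paper leaves it implicit.
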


\begin{proof} 
By Hurwitz's Theorem,
$$0 = 2g(X) - 2 = |H| \cdot (-2) + \sum_{j = 0, 1 , \infty} [H:I_{j_X}] \cdot ( | I_{j_X} |- 1).$$
It follows that 
\begin{equation}
\label{eq:whatup}
1 = \sum_{j = 0, 1, \infty} 1/| I_{j_X} |.
\end{equation}
This forces $| I_{j_X} | \ge 3$ for some $j$, so $d = |I_{0_X}|\ge 3$ since $I_{0_X}$ has the largest order of any inertia group.   Fix an identification of $I_{0_X}$ with $\mathbb{Z}/d$.  The generator $1$ of $I_{0_X}$ then acts on $X$ via complex multiplication by a root of unity $\zeta_d$ of order $d$.  This forces $d \in \{3,6,4\}$.   Now (\ref{eq:whatup}) together with the inequalities $d = |I_{0_X}| \ge |I_{1_X}| \ge |I_{\infty_X}|$ force $(|I_{1_X}|, |I_{\infty_X}|)$ to be $(3,3)$ if $d = 3$, $(3,2)$ if $d = 6$, and $(4,2)$ if $d = 4$.  If $ d= 6$ or $d = 4$, the group $I'_{0_X}$ of automorphisms of $X$ fixing $0_X$ is equal to $I_{0_X}$.  If $d = 3$, $I'_{0_X}$ is cyclic of order $6$ and generated by $I_{0_X}$ together with the multiplication by  $-1$ map $z:X \to X$.

Let $\mathcal{T}$ be the group of elements of $H$ which are translations $X \to X$ relative to the group law of $X$. Suppose $h$ is an arbitrary element of $H$.  Then $h(0_X) = t(0_X)$ for some translation $t:X \to X$, where we do not claim at this point that $t \in \mathcal{T}$.  Moreover, $t^{-1} \circ h = \iota:X \to X$ is an automorphism of $X$ which fixes $0_X$.  So $\iota$ is a unit in the endomorphism ring of $X$ as an elliptic curve.  If $d = 6$ or $4$, all such units are produced by powers of a generator of $I_{0_X}$, so it follows that $\iota \in I_{0_X}$ and $t \in \mathcal{T}$.  Thus in this case, $H$ is the semi-direct product of $I_{0_X}$ with the normal subgroup $\mathcal{T}$ of $H$. 

Suppose now that $d = 3$.  Then $\iota \in I_{0_X}$ or $z^{-1} \circ \iota \in I_{0_X}$.  We find that in this case, $h = t \circ \iota = \widetilde{t} \circ \widetilde{\iota}$ where either $t = \widetilde{t} \in \mathcal{T}$ and $\iota = \widetilde{\iota} \in I_{0_X}$, or $\widetilde{t} = t\circ z$ and $\widetilde{\iota} = z^{-1} \circ \iota \in I_{0_X}$.  Suppose the alternative $\widetilde{t}  = t \circ z$ occurs.  Then  $\widetilde{t} = t \circ z = h \circ \widetilde{\iota}^{-1}$ lies in $H$.  In this case there would be a point $\tau \in X$ such that  $\widetilde{t}(P) = -P + \tau$ for all $P \in X$.  Hence, $\widetilde{t}(\tau') = \tau'$ for all points $\tau' \in X$ satisfying  $ \tau' + \tau'= \tau$  with respect to the group law on $X$. Since $\widetilde{t}^2$ is the identity, we would then have some points of $X$ with inertia groups of even order, which we have shown does not occur when $d = 3$.  So in fact, $h = t \circ \iota$ with $t \in \mathcal{T}$ and $\iota \in I_{0_X}$. This implies $H$ is the semi-direct product of $I_{0_X}$ with $\mathcal{T}$ in all cases.

Recall that the action of the generator $1$ of $I_{0_X} = \mathbb{Z}/d$ on $X$ defines the complex multiplication of $X$ corresponding to $\zeta_d$. Since $\mathcal{T}$ is stable under the conjugation action of $I_{0_X}$, $\mathcal{T}$ is the group of translations associated to  a subgroup $\mathcal{J}(\mathcal{T})$ of torsion points on $X$ which is stable under the action of $\mathbb{Z}[\zeta_d]$.  We have a $\mathcal{T}$-Galois isogeny  $\lambda_1:X \to X' = X/\mathcal{T} = X/\mathcal{J}(\mathcal{T})$, and $X'$ is an elliptic curve over $\overline{\mathbb{Q}}$ with complex multiplication by $\mathbb{Z}[\zeta_d]$.  Therefore $X'$ is isomorphic to $X$.  The morphism $\lambda_2: X' = X/\mathcal{T} \to \mathbb{P}^1_{\overline{\mathbb{Q}}}$ induced by $\lambda:X \to \mathbb{P}^1_{\overline{\mathbb{Q}}}$ defines an $H/\mathcal{T}$ Galois cover of $\mathbb{P}^1_{\overline{\mathbb{Q}}}$ which is unramified outside $\{0,1,\infty\}$. Furthermore the order of the inertia group of $H/\mathcal{T}$ at a point of $X'$ over $j \in \{0,1,\infty\}$ must be the same as the order of $I_{j_X}$ since $\lambda_1:X \to X'$ is \'etale.  Since $X'$ is isomorphic to $X$ and $H/\mathcal{T}$ is isomorphic to $I_{0_X}$, Kummer theory shows that $\lambda_2:X' \to \mathbb{P}^1_{\overline{\mathbb{Q}}}$ must be $H/\mathcal{T} $-isomorphic to a unique $\mathbb{Z}/d$-cover  $\lambda_{a,b,d}:X \to \mathbb{P}^1_{\overline{\mathbb{Q}}}$ appearing in Example \ref{ex:complex} when we identify $H/\mathcal{T}$ with $I_{0_X}$ and $I_{0_X}$ with $\mathbb{Z}/d$.  The $H$-cover $X \to \mathbb{P}^1_{\overline{\mathbb{Q}}}$ we started with then results from the isogeny $X \to X' = X/\mathcal{T}$ followed by $\lambda_{a,b,d}$.
\end{proof}

\begin{theorem}
\label{thm:ellthm} 
With the notation of Proposition \ref{prop:ellconstruct}, the action of $A(\pi)$ on the Tate module $T_{\lambda}(X)$ of the generalized Jacobian descends to an action of $A(\pi)$ on $T(X)$ which respects the action of $\mathbb{Z}[\zeta_d]$.  Let $F$ be a number field such that $\iota(G_{\mathbb{Q}}) \cap A(\pi) = \iota(G_F)$. The action of $\iota(G_{\mathbb{Q}}) \cap A(\pi)$ on $T_{\lambda}(X)$ defines an infinite index subgroup of the group of automorphisms of $T_{\lambda}(X)$ induced by $A(\pi)$. The action of $\iota(G_{\mathbb{Q}}) \cap A(\pi)$ on $T(X)$ defines a finite index subgroup of the group of automorphisms of  $T(X)$ induced by $A(\pi)$.  
\end{theorem}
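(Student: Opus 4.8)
The plan is to prove the three assertions separately, using throughout the structure from Proposition~\ref{prop:ellconstruct}: $X$ has complex multiplication by $\mathcal{O}=\mathbb{Z}[\zeta_d]$, and $H=H_{a,b,d}\ltimes\mathcal{T}$ where $H_{a,b,d}=\mathbb{Z}/d$ acts through this complex multiplication and $\mathcal{T}$ acts by translations. Note that since the $H$-action on $X=Y$ is defined over $F$ (in particular the CM automorphism scaling by $\zeta_d$), we have $\mathbb{Q}(\zeta_d)\subseteq F$. For the descent, I would apply Theorem~\ref{thm:preserve2}, whose criterion is necessary and sufficient because $X=Y$, with $\mathcal{D}=H$ and $\kappa^{-1}(\{0,1,\infty\})=\{0,1,\infty\}$. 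Translations act trivially on the Tate module of an elliptic curve, so $\mathcal{T}$ acts trivially on $T_\ell(X)$, while the CM structure gives an $H$-equivariant splitting $T_\ell(X)\otimes_{\mathbb{Z}_\ell}\overline{\mathbb{Q}}_\ell\cong\widetilde{\chi}\oplus\widetilde{\chi}'$ into the two distinct one-dimensional characters of $H$ that are trivial on $\mathcal{T}$ and faithful on $H_{a,b,d}$. For each such $V$ one has $m_V=\mathrm{dim}_{\overline{\mathbb{Q}}}(V^H)+\mathrm{dim}_{\overline{\mathbb{Q}}}(V)=0+1=1$, and exactness of $(\ref{eq:genjac2})$ forces the multiplicity $m_V-n_V$ of $V$ in $T_\ell(X)$ to equal $1$, whence $n_V=0$; every other irreducible $V$ has multiplicity $0$ in $T_\ell(X)$, so $n_V=m_V$. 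In all cases $n_V\in\{0,m_V\}$, so the action on $T_{\ell,\lambda}(X)$ descends to $T_\ell(X)$, and running this over all $\ell$ gives the descent to $T(X)$. Since $A(\pi)$ commutes with the $H$-action and $H_{a,b,d}$ acts as $\mathbb{Z}[\zeta_d]$, the induced action respects $\mathbb{Z}[\zeta_d]$.

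For the infinite-index assertion, recall that by Theorem~\ref{thm:main} the group $A(\pi)$ induces all of $\mathrm{Aut}_{H,\beta}(\overline{R})$ on $\overline{R}=T_\lambda(X)$. I would restrict to the $H$-invariants $\overline{R}^H$, which every $H$-equivariant automorphism preserves. By $(\ref{eq:whatup})$ there are exactly three $H$-orbits on $\lambda^{-1}(\{0,1,\infty\})$, so using $(\ref{eq:leftterm2})$ (equivalently, Gasch\"utz's description $\mathbb{Q}_\ell\oplus\mathbb{Q}_\ell[H]$ of the middle term together with the vanishing of the trivial character in $T_\ell(X)$), the invariants $\overline{R}^H$ form a rank-two summand lying in the torus term of $(\ref{eq:genjac2})$, spanned rationally by the three fiber-sums modulo the diagonal, with $\overline{R}^H\cong\widehat{\mathbb{Z}}^2$. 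Restriction gives $\Phi\colon\mathrm{Aut}_{H,\beta}(\overline{R})\to\mathrm{GL}(\overline{R}^H)=\mathrm{GL}_2(\widehat{\mathbb{Z}})$, whose image is open since at each prime $\ell\nmid|H|$ the invariants split off as an $H$-module direct factor. On the other hand, $G_F$ fixes each of the three fibers over the $\mathbb{Q}$-rational points $\{0,1,\infty\}$ as a set and acts on each $\mathbb{Z}_\ell(1)$ via $\chi_{\mathrm{cyc}}$, so it scales every fiber-sum by $\chi_{\mathrm{cyc}}(\sigma)$; hence $\Phi(\iota(G_F))$ consists of the scalar matrices $\chi_{\mathrm{cyc}}(\sigma)\,I_2$. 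Scalars have infinite index in the open subgroup $\Phi(\mathrm{Aut}_{H,\beta}(\overline{R}))$ of $\mathrm{GL}_2(\widehat{\mathbb{Z}})$, so $\iota(G_{\mathbb{Q}})\cap A(\pi)=\iota(G_F)$ has infinite index in $\mathrm{Aut}_{H,\beta}(\overline{R})$.

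For the finite-index assertion, observe that $A(\pi)$ acts $H$-equivariantly, hence $\mathcal{O}\otimes\widehat{\mathbb{Z}}$-linearly, on $T(X)$, which is free of rank one over $\mathcal{O}\otimes\widehat{\mathbb{Z}}$ because $X$ has CM by the maximal order $\mathcal{O}=\mathbb{Z}[\zeta_d]$. Thus the automorphisms of $T(X)$ induced by $A(\pi)$, and a fortiori those induced by $\iota(G_F)$, lie in the abelian Cartan group $(\mathcal{O}\otimes\widehat{\mathbb{Z}})^*$. Since $F\supseteq\mathbb{Q}(\zeta_d)$, the Galois action factors through this same group, and by the main theorem of complex multiplication (see \cite[Chapter~IV]{SerreElliptic1968}) the image of $G_F$ is open in $(\mathcal{O}\otimes\widehat{\mathbb{Z}})^*$. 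From the inclusions of images $\iota(G_F)\subseteq A(\pi)\subseteq(\mathcal{O}\otimes\widehat{\mathbb{Z}})^*$ with the first open in the last, it follows that $\iota(G_F)$ has finite index in the group of automorphisms of $T(X)$ induced by $A(\pi)$.

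The main obstacle is the middle assertion: correctly isolating the rank-two trivial isotypic piece, verifying that it lies entirely in the torus term and that $A(\pi)$ realizes an open subgroup of $\mathrm{GL}_2$ on it while $G_F$ acts only through cyclotomic scalars, and reconciling the adelic statement with the prime-by-prime arguments (in particular controlling the behavior at the primes dividing $|H|$, where the invariants need not split off integrally). The first and third assertions are comparatively formal once the CM structure of Proposition~\ref{prop:ellconstruct} is in hand.
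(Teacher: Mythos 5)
Your descent argument and your finite-index argument follow the paper's own route, but the finite-index part has a real gap: you invoke the main theorem of complex multiplication for ``the image of $G_F$,'' whereas the group whose index you must bound is the image of $\iota(G_F)$ acting on $T(X)$ \emph{through the $A(\pi)$-action}, i.e.\ through conjugation by Bely\u{\i} lifts $\sigma'$. The CM theorem controls the \emph{arithmetic} Galois action of $G_F$ on the Tate module of the CM curve $X/F$; a priori these are two different homomorphisms $G_F \to (\mathbb{Z}[\zeta_d]\otimes_{\mathbb{Z}}\widehat{\mathbb{Z}})^*$, since $\sigma'$ need not fix $F(Y)$ pointwise. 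Identifying them, after enlarging $F$ by a finite extension, is precisely the content of Theorem \ref{thm:restriction} and Corollary \ref{cor:Rtilde}, and the paper invokes this exactly here (writing $\chi_F = \chi_A \circ \iota|_{G_F}$ after a finite extension of $F$). Without that step the inclusion ``image of $\iota(G_F)$ open in $(\mathbb{Z}[\zeta_d]\otimes_{\mathbb{Z}}\widehat{\mathbb{Z}})^*$'' is unjustified; this compatibility is the central arithmetic input of the paper and cannot be treated as definitional. (Enlarging $F$ is harmless: the image of the smaller Galois group is still open, and it is contained in the image you must compare with.) Note the contrast with your infinite-index argument, where you only use properties of the conjugation action itself --- preservation of the three fibers and the cyclotomic action on inertia --- which can be read off from Bely\u{\i}'s construction in \S\ref{s:GT} without Theorem \ref{thm:restriction}.

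Your infinite-index argument is genuinely different from the paper's and is attractive. The paper uses Theorem \ref{thm:main} to build an explicit element $\mathrm{id} + m_1 m_2 f$ in the image of $A(\pi)$, where $f$ carries the trivial-isotypic line $M_0$ into $M_1$, $m_1$ clears denominators, and $m_2$ kills the obstruction coming from the extension class; its powers then give infinitely many distinct cosets of the image of $\iota(G_F)$, which preserves $M_0$ and $M_1$. You instead restrict to the rank-two invariant module $\overline{R}^H$ and compare a large $\mathrm{GL}_2$-image with a scalar (or diagonal) image. This works, but needs two repairs. First, your justification of openness of $\Phi$ covers only $\ell \nmid |H|$, and even there the splitting of the invariants as an $H$-module summand only produces $H$-equivariant extensions of a given $g\in\mathrm{GL}_2(\mathbb{Z}_\ell)$; you must also observe that $H^2(H, T_{\ell,\lambda}(X)) = 0$ when $\ell \nmid |H|$, so the condition $\gamma_*(\beta)=\beta$ in $\mathrm{Aut}_{H,\beta}$ is vacuous and Theorem \ref{thm:main} actually realizes these extensions by elements of $A(\pi)$. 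Second, you never need openness of the full adelic image (which you do not establish at primes dividing $|H|$): since $\mathrm{Aut}_{H,\beta}(\overline{R})$ decomposes as a product over $\ell$, a single prime $\ell \nmid |H|$, where $\Phi_\ell$ maps the image of $A(\pi)$ onto $\mathrm{GL}_2(\mathbb{Z}_\ell)$ while $\Phi_\ell(\iota(G_F))$ lands in the subgroup preserving the two lines spanned by $M_0$ and $M_1$, already gives infinite index, because the index of images under a homomorphism is at most the original index. With these two fixes the worry you raise at the end about the primes dividing $|H|$ disappears entirely.
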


\begin{proof} 
By Proposition \ref{prop:ellconstruct}, $X$ is an elliptic curve with complex multiplication by $\mathbb{Z}[\zeta_d]$ for an integer $d \in \{3,4,6\}$.  The group $H$ is the semi-direct product of a group $\mathcal{T}$ of translations on $X$ with a cyclic group $I$ of order $d$, with a generator of $I$ acting by complex multiplication by $\zeta_d$.  The action of $\mathcal{T}$ on the adelic Tate module $T(X)$ is trivial.  So the action of $H$ on $T(X)$ factors through $H/\mathcal{T} = I$ and corresponds to the action of complex multiplication.    Therefore, in the sequence (\ref{eq:genjac2}) the action of $H$ on $\overline{\mathbb{Q}}_\ell \otimes_{\mathbb{Z}_\ell} T_{\ell}(X)$ gives two (non-trivial) one dimensional characters.  Hence, by Theorem \ref{thm:preserve2},  the action of $A(\pi)$ on $ T_{\lambda}(X) $ descends to an action on $T(X)$. 
 
Therefore, we have an action of $A(\pi)$ on the submodule 
$$\frac{\prod_{x \in \lambda^{-1}(\{0,1,\infty\})} \mathbb{Z}_\ell(1)}{\mathrm{diag}(\mathbb{Z}_\ell(1))}$$
of $T_{\ell,\lambda}(X)$ appearing in the sequence (\ref{eq:genjac2}).  This submodule contains the rank one $\mathbb{Z}_\ell$-modules $M_j$ with trivial action by $H$ that are the images of $\mathbb{Z}_\ell(1)$ diagonally embedded in
$$\prod_{x \in \lambda^{-1}(j)} \mathbb{Z}_\ell(1)$$
for $j \in \{0, 1, \infty\}$. In particular, $\mathbb{Q}_\ell \otimes_{\mathbb{Z}_\ell} M_0$ and $\mathbb{Q}_\ell \otimes_{\mathbb{Z}_\ell} M_1$ define two distinct one-dimensional $\mathbb{Q}_\ell[H]$-submodules of $\mathbb{Q}_\ell \otimes_{\mathbb{Z}_\ell} T_{\ell,\lambda}(X)$ with trivial $H$-action.  Since $\mathbb{Q}_\ell \otimes_{\mathbb{Z}_\ell} T_{\ell,\lambda}(X) \cong \mathbb{Q}_\ell \oplus \mathbb{Q}_\ell[H]$, there exists a nilpotent $\mathbb{Q}_\ell[H]$-module endomorphism $f$ of $\mathbb{Q}_\ell \otimes_{\mathbb{Z}_\ell} T_{\ell,\lambda}(X)$ that sends $\mathbb{Q}_\ell \otimes_{\mathbb{Z}_\ell} M_0$ isomorphically to  $\mathbb{Q}_\ell \otimes_{\mathbb{Z}_\ell} M_1$ and that sends all other irreducible $\mathbb{Q}_\ell[H]$-module summands of $\mathbb{Q}_\ell \otimes_{\mathbb{Z}_\ell} T_{\ell,\lambda}(X)$ to zero. Multiplying $f$ with a sufficiently large positive integer, say $m_1$, we obtain that $m_1\cdot f$ is a nilpotent $\mathbb{Z}_\ell[H]$-module endomorphism of $T_{\ell,\lambda}(X)$ that sends $M_0$ to a non-zero submodule of $M_1$. If $m_2$ is the largest power of $\ell$ dividing $|H|$, it then follows that $m_1m_2\cdot f$ is a nilpotent $\mathbb{Z}_\ell[H]$-module endomorphism of $T_{\ell,\lambda}(X)$ that sends $M_0$ to a non-zero submodule of $M_1$ and that sends any extension class in $H^2(H,T_{\ell,\lambda}(X))$ to zero. Since $T_{\ell,\lambda}(X)$ is the maximal pro-$\ell$ quotient of $\overline{R}=T_{\lambda}(X)$, we can write $T_{\ell,\lambda}(X)=R/\widetilde{I}_\ell$ for a characteristic subgroup $\widetilde{I}_\ell$ of $R$. It follows from Theorem \ref{thm:main} that $\mathrm{id}_{T_{\ell,\lambda}(X)}+m_1m_2\cdot f$ is an automorphism of $T_{\ell,\lambda}(X)$ that is induced by an element of $A(\pi)=A(\pi,\widetilde{I}_\ell)$. Moreover, $\mathrm{id}_{T_{\ell,\lambda}(X)}+m_1m_2\cdot f$ generates a subgroup of this automorphism group that is isomorphic to $\mathbb{Z}$ and every non-trivial element of this subgroup sends $M_0$ to a non-zero submodule of $M_1$.

Since $A(\pi)$ is a finite index subgroup of $\mathrm{Aut}(\widehat{F}_2)$, it follows that there exists a number field $F$ such that $\iota(G_{\mathbb{Q}}) \cap A(\pi) = \iota(G_F)$. Since $G_F$ cannot send points over $0$ to points over $1$ or $\infty$, the action of $G_F=G_{\mathbb{Q}}\cap \iota^{-1}(A(\pi))$ on $T_{\ell,\lambda}(X)$ preserves $M_0$ and $M_1$.  By the above construction of $\mathrm{id}_{T_{\ell,\lambda}(X)}+m_1m_2\cdot f$, this implies that the action of $\iota(G_{\mathbb{Q}}) \cap A(\pi)$ on $T_{\lambda}(X)$ defines an infinite index subgroup of the group of automorphisms of $T_{\lambda}(X)$ induced by $A(\pi)$. 

We now compare the actions of $G_F$ and $A(\pi)$ on $T(X)$.   The action of $\mathbb{Z}[\zeta_d]$ on $X$ by complex multiplication makes $T(X)$ a rank one free module for $\widehat{\mathbb{Z}}\otimes_{\mathbb{Z}} \mathbb{Z}[\zeta_d] = \widehat{\mathbb{Z}}[\zeta_d]$.  The action of $A(\pi)$ on $T(X)$ respects the action of $H$, and the action of $H$ corresponds to the action of complex multiplication.  On picking a basis for $T(X)$ as a rank one free module for $\widehat{\mathbb{Z}}[\zeta_d]$, we see that the action of $A(\pi)$ on $T(X)$ is defined by a homomorphism $\chi_A: A(\pi) \to \widehat{\mathbb{Z}}[\zeta_d]^*$.  Similarly, the action of $G_F$ on  $T(X)$ is defined by a homomorphism $\chi_F:G_F \to \widehat{\mathbb{Z}}[\zeta_d]^*$.  When $\iota:G_{\mathbb{Q}} \to \mathrm{Aut}(\widehat{F}_2)$ is the Bely\u{\i}  embedding, we know by Corollary \ref{cor:Rtilde} that, after enlarging $F$ by a finite extension, we have $\chi_F = \chi_A \circ \iota|_{G_F}$.  Hence, the image of $\chi_F$ is contained in the image of $\chi_A$.  On the other hand, the main theorem of complex multiplication (see \cite[Thm. 5.4]{ShimuraBook}) shows that the image of $\chi_F$ has finite index in $\widehat{\mathbb{Z}}[\zeta_d]^*$.   Therefore, the image of $\chi_F$ has finite index in the image of $\chi_A$ and the proof is complete.
 \end{proof}

\bibliographystyle{plain}
\bibliography{GTRepsShort} 

\begin{thebibliography}{10}

\bibitem{Belyi1}
G.~V. Bely\u{\i}.
\newblock Galois extensions of a maximal cyclotomic field.
\newblock {\em Izv. Akad. Nauk SSSR Ser. Mat.}, 43(2):267--276, 1979; English
  translation in \textit{Math. USSR Izv.}, 14:247--256, 1980.

\bibitem{Belyi}
G.~V. Bely\u{\i}.
\newblock On extensions of the maximal cyclotomic field having a given
  classical {G}alois group.
\newblock {\em J. Reine Angew. Math.}, 341:147--156, 1983.

\bibitem{D}
V.~G. Drinfel'd.
\newblock On quasitriangular quasi-{H}opf algebras and on a group that is
  closely connected with {${\rm Gal}(\overline{\bf Q}/{\bf Q})$}.
\newblock {\em Algebra i Analiz}, 2(4):149--181, 1990.

\bibitem{FJ}
M.~D. Fried and M.~Jarden.
\newblock {\em Field arithmetic}, volume~11 of {\em Ergebnisse der Mathematik
  und ihrer Grenzgebiete. 3. Folge. A Series of Modern Surveys in Mathematics
  [Results in Mathematics and Related Areas. 3rd Series. A Series of Modern
  Surveys in Mathematics]}.
\newblock Springer-Verlag, Berlin, second edition, 2005.

\bibitem{Ga}
W.~Gasch\"{u}tz.
\newblock Zu einem von {B}. {H}. und {H}. {N}eumann gestellten {P}roblem.
\newblock {\em Math. Nachr.}, 14:249--252 (1956), 1955.

\bibitem{Gruenberg}
K.~W. Gruenberg.
\newblock {\em Relation modules of finite groups}.
\newblock American Mathematical Society, Providence, R.I., 1976.
\newblock Conference Board of the Mathematical Sciences Regional Conference
  Series in Mathematics, No. 25.

\bibitem{LG}
F.~Grunewald and A.~Lubotzky.
\newblock Linear representations of the automorphism group of a free group.
\newblock {\em Geom. Funct. Anal.}, 18(5):1564--1608, 2009.

\bibitem{HarbaterSchneps}
D.~Harbater and L.~Schneps.
\newblock Approximating {G}alois orbits of dessins.
\newblock In {\em Geometric {G}alois actions, 1}, volume 242 of {\em London
  Math. Soc. Lecture Note Ser.}, pages 205--230. Cambridge Univ. Press,
  Cambridge, 1997.

\bibitem{Hartshorne1977}
R.~Hartshorne.
\newblock {\em Algebraic geometry}.
\newblock Springer-Verlag, New York-Heidelberg, 1977.
\newblock Graduate Texts in Mathematics, No. 52.

\bibitem{HidaModForms}
H.~Hida.
\newblock {\em Modular forms and {G}alois cohomology}, volume~69 of {\em
  Cambridge Studies in Advanced Mathematics}.
\newblock Cambridge University Press, Cambridge, 2000.

\bibitem{PropertyT}
M.~Kaluba, D.~Kielak, and P.~W. Nowak.
\newblock On property ({T}) for {$\mathrm{Aut}(F_n)$} and
  {$\mathrm{SL}_n(\mathbb{Z})$}.
\newblock {\em Ann. of Math. (2)}, 193(2):539--562, 2021.

\bibitem{PropertyT5}
M.~Kaluba, P.~W. Nowak, and N.~Ozawa.
\newblock {${\rm Aut}(\mathbb{F}_5)$} has property {$(T)$}.
\newblock {\em Math. Ann.}, 375(3-4):1169--1191, 2019.

\bibitem{LochakSchneps}
P.~Lochak and L.~Schneps.
\newblock Open problems in {G}rothendieck-{T}eichm\"{u}ller theory.
\newblock In {\em Problems on mapping class groups and related topics},
  volume~74 of {\em Proc. Sympos. Pure Math.}, pages 165--186. Amer. Math.
  Soc., Providence, RI, 2006.

\bibitem{LubotzkyProfinite}
A.~Lubotzky.
\newblock Pro-finite presentations.
\newblock {\em J. Algebra}, 242(2):672--690, 2001.

\bibitem{PropertyT4}
M.~Nitsche.
\newblock Computer proofs for {P}roperty {(T)}, and {SDP} duality.
\newblock Preprint, arXiv:2009.05134, 2020.

\bibitem{Zalesskii}
L.~Ribes and P.~Zalesskii.
\newblock {\em Profinite groups}, volume~40 of {\em Ergebnisse der Mathematik
  und ihrer Grenzgebiete. 3. Folge. A Series of Modern Surveys in Mathematics
  [Results in Mathematics and Related Areas. 3rd Series. A Series of Modern
  Surveys in Mathematics]}.
\newblock Springer-Verlag, Berlin, second edition, 2010.

\bibitem{SchnepsSurvey}
L.~Schneps.
\newblock The {G}rothendieck-{T}eichm\"{u}ller group {$\widehat{\rm GT}$}: a
  survey.
\newblock In {\em Geometric {G}alois actions, 1}, volume 242 of {\em London
  Math. Soc. Lecture Note Ser.}, pages 183--203. Cambridge Univ. Press,
  Cambridge, 1997.

\bibitem{SerreElliptic1968}
J.-P. Serre.
\newblock {\em Abelian {$l$}-adic representations and elliptic curves}.
\newblock McGill University lecture notes written with the collaboration of
  Willem Kuyk and John Labute. W. A. Benjamin, Inc., New York-Amsterdam, 1968.

\bibitem{GroupesAlgebriques}
J.-P. Serre.
\newblock {\em Algebraic groups and class fields}, volume 117 of {\em Graduate
  Texts in Mathematics}.
\newblock Springer-Verlag, New York, 1988.
\newblock Translated from the French.

\bibitem{ShimuraBook}
G.~Shimura.
\newblock {\em Introduction to the arithmetic theory of automorphic functions},
  volume~11 of {\em Publications of the Mathematical Society of Japan}.
\newblock Princeton University Press, Princeton, NJ, 1994.
\newblock Reprint of the 1971 original, Kano Memorial Lectures, 1.

\bibitem{Silverman}
J.~H. Silverman.
\newblock {\em The arithmetic of elliptic curves}, volume 106 of {\em Graduate
  Texts in Mathematics}.
\newblock Springer, Dordrecht, second edition, 2009.

\end{thebibliography}

\end{document}